\documentclass[11pt,reqno]{amsart}
\usepackage{amssymb}
\usepackage{amsmath}
\usepackage{amsfonts}
\usepackage{epsfig}
\usepackage{color}
\usepackage{epstopdf}
\usepackage{graphicx}
\usepackage{amsthm}
\usepackage{enumerate}
\usepackage[mathscr]{eucal}
\usepackage{verbatim}
\usepackage{multirow}
\usepackage{pst-grad}
\usepackage{pstricks}
\usepackage{pst-node}
\usepackage{cases}


\headheight=8pt     \topmargin=20pt \textheight=624pt
\textwidth=432pt \oddsidemargin=10pt \evensidemargin=10pt
\setcounter{MaxMatrixCols}{30}  \theoremstyle{plain}
\newtheorem{theorem}{Theorem}[section]
\newtheorem{lemma}[theorem]{Lemma}
\newtheorem{lem}[theorem]{Lemma}
\newtheorem{proposition}[theorem]{Proposition}
\newtheorem{corollary}[theorem]{Corollary}

\newtheorem{rmk}[theorem]{Remark}

\theoremstyle{plain}


\newcommand{\cuxi}{\mathcal I}

\DeclareMathOperator{\dist}{dist}
\DeclareMathOperator{\supp}{supp}

\newcommand{\ef}[1]{f_{\cuxi^{#1}}}
\newcommand{\eff}[2]{f_{\cuxi^{#1}_{#2}}}
\newcommand{\cu}[1]{\cuxi^{#1}}
\newcommand{\cuu}[2]{\cuxi^{#2}_{#1}}

\newcommand{\gth}{\gamma_\tau^h}

\newcommand{\clag}{\Gamma (\epsilon)}


\title[Averaged decay estimates over curves]
{Averaged decay estimates \\ for Fourier transforms of measures \\ over curves with nonvanishing  torsion}
\author{Yutae Choi}
\author{Seheon Ham}
\author{Sanghyuk Lee}
\thanks{S. Lee is supported in part by NRF-2015R1A2A2A05000956 (Republic of Korea).}
\keywords{Fourier transform, averaged decay,  measure}
\subjclass[2010]{42B10}
\address{Department of Mathematics, Pohang University of Science and Technology, Pohang 37673, Republic of Korea}
\email{yutae\_choi@postech.ac.kr}
\address{School of Mathematics, Korea Institute for Advanced Study, Seoul 02455, Republic of Korea}
\email{hamsh@kias.re.kr}
\address{Department of Mathematical Sciences, Seoul National University, Seoul 08826, Republic of Korea}
\email{shklee@snu.ac.kr}

\begin{document}

\begin{abstract}
We study averaged decay estimates for Fourier transforms of measures
when the averages are taken over space curves with non-vanishing
torsion. We extend the previously known results to higher dimensions and discuss sharpness of the estimates. 
\end{abstract}

\maketitle

\section{Introduction}
Let $\mu$ be a positive Borel measure with compact support in
$\mathbb R^d$.
For $0 < \alpha < d$, the $\alpha$-dimensional energy of $\mu$ is
given by
    \begin{align*}
    I_{\alpha}(\mu) = \iint |x-y|^{-\alpha} d\mu(x) d\mu(y).
    \end{align*}
The energy  $I_{\alpha}$ has been widely used in various studies,
especially geometric measure theory problems, to describe regularity property of measure. In fact, it is well
known that finiteness of energy determines the Hausdorff dimension
of the support of $\mu$.
Finiteness of
$I_{\alpha}(\mu)$ and $L^{2}$ averaged decay estimates of
$\widehat{\mu}$ over the ball $B(0,1)$ are closely related. Here $B(x,r)$ denotes the
ball  which is centered at $x$ and of radius $r$. Indeed,
by the identity
    \begin{align*}
    \iint |x-y|^{-\alpha} d\mu(x) d\mu(y)
    = C_{\alpha,d} \int |\widehat{\mu}(\xi)|^{2} |\xi|^{\alpha - d} d\xi
    \end{align*}
it follows that $I_{\alpha}(\mu) < \infty$ for $\alpha < \delta$
provided that
    $
    \int_{B(0,1)} |\widehat{\mu}(\lambda\xi)|^{2} d\xi \leq C \lambda^{-\delta}
    $
for a positive constant $\delta$. Conversely, if  $I_{\alpha}(\mu) <
\infty$, it follows that
    $
    \int_{B(0,1)} |\widehat{\mu}(\lambda\xi)|^{2} d\xi
      \lesssim \lambda^{-\alpha} I_{\alpha}(\mu).
    $
(See Chapter $8$ in \cite{Wo2} and Chapter $12$ in \cite{Mab} for
further details.)

If $B(0,1)$ is replaced by a smooth submanifold of lower dimension,
it is expected that the decay rate gets worse. In connection with
problems in geometric measure theory there have been attempts to
characterize averaged decay over smooth manifolds. As is well
understood in  problems such as Fourier restriction problems, the
curvature properties of the underlying  submanifolds become important.

Let $\Sigma$ be a smooth compact submanifold with measure $d\nu$.
Let us consider the estimate, for $\lambda>1$,
 \begin{align}\label{eqn:submanifold}
   \int_{\Sigma} |\widehat{\mu}(\lambda\xi)|^{2} d\nu(\xi) \leq C \lambda^{-\zeta }
    I_{\alpha}(\mu).
    \end{align}
    In addition to $I_\alpha(\mu)<\infty$  the estimate
\eqref{eqn:submanifold} has been studied under the assumption that
\[ |\widehat{\nu}(\xi)| \lesssim |\xi|^{-a}, \quad \nu(B(x,\rho))
\lesssim \rho^{b}.\] The following can be found in \cite{Er}: If
$0<a,b<d$ and a compactly supported probability measure $\nu$
satisfies the above condition, then
  \eqref{eqn:submanifold} holds with  $\zeta =\max(\min(\alpha,a),\, \alpha -d +b)$.

In particular, in relation to the Falconer distance set problem (cf.
\cite{Mab,Wo2,Er}) the case that $\Sigma$ is the unit sphere
$\mathbb S^{d-1}$ and $\nu$ is the usual surface measure was studied
extensively after Mattila's contribution \cite{Ma} to  the  Falconer distance set problem. An extension of Mattila's estimate in \cite{Ma} was later obtained by Sj\"olin \cite{Sj}.  
The results in \cite{Ma,Sj} were based on a rather straightforward $L^2$
argument. Their results were further improved subsequently by
Bourgain, Wolff and Erdo\u{g}an \cite{Bo, Wo1, Er1, Er2}. These improvements were based on sophisticated  methods which were developed in the study of the Fourier restriction
problem (and Bochner-Riesz conjecture).
Especially in $\mathbb R^2$, for $\Sigma=\mathbb S^1$ the sharp estimates were
established by Mattila \cite{Ma} and Wolff \cite{Wo1}. (See also
Erdo\u{g}an \cite{Er, Er1, Er2}.) 
{In fact, it is proved in \cite{Ma,Sj}
that \eqref{eqn:submanifold} holds with $\zeta \leq \max( \min(\alpha,1/2),\alpha-1)$ and $\zeta$ should be smaller than or equal to $\max(\min(\alpha,1/2),\alpha/2)$.  
Later Wolff proved that \eqref{eqn:submanifold} holds with $\zeta < \alpha/2$ for $0< \alpha <2$.}
Recently a related result was obtained
by replacing the circle with a certain class of general curves in
$\mathbb R^2$ by Erdo\u{g}an and Oberlin \cite{ErOb}.

In this paper, we are concerned with the average of $\widehat \mu$ over space curves in
$\mathbb{R}^{d}$, $d \geq 3$.  Let $\gamma : I = [0,1] \to
\mathbb{R}^{d}$ be of a $C^{d+1}$ curve satisfying
    \begin{align}\label{eqn:non-degenerate}
    \det(\gamma'(t), \gamma''(t), \cdots, \gamma^{(d)}(t)) \neq 0 \textrm{ for }\, t \in I.
    \end{align}
As is to seen later, the averaged estimate over curves are closely
related to the restriction estimates for the curves which have been studied by
various authors. We refer the reader to  \cite{BL, BOS2, BOS1, BOS3,
DeMu, DeWr, Dr, DrMa, DrMa1, HaLe, Sto} and references therein.

 For a nonnegative number $x$, let us denote by $ [x],
\langle x\rangle$  the integer part and the fractional part of $x$,
respectively. The following is our first result.

\begin{theorem}\label{thm:main}
Let $0 < \alpha < d$, and let $\mu$ be a positive Borel measure
supported in $B(0,1)$, and $\gamma \in C^{d+1}([0,1])$ be a space
curve satisfying \eqref{eqn:non-degenerate}. Suppose
$I_\alpha(\mu)=1$, then for $\lambda
>1$ there exists a constant $C >0$ such that, for $\delta<\delta(\alpha)$,
\begin{equation}\label{eqn:curve}
\int_0^1 |\widehat \mu (\lambda \gamma(t))|^2 dt \le C  \lambda^{-\delta},
\end{equation}
where  $\delta(\alpha)=\frac{\alpha-d+2}{2}$ if  $d-1 \le \alpha < d$,
and $\delta(\alpha)=\max \Big(\frac{1-\langle d-\alpha
\rangle}{[d-\alpha]+1}, \frac{2-\langle d-\alpha
\rangle}{([d-\alpha]+1)(2-\langle d-\alpha \rangle) +1}\Big)$
otherwise.
\end{theorem}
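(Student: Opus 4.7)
The natural starting point is the identity
\begin{equation*}
\int_0^1 |\widehat\mu(\lambda\gamma(t))|^2\,dt
= \iint \widehat{\sigma}(\lambda(x-y))\,d\mu(x)\,d\mu(y), \qquad \sigma := \gamma_{*}(\chi_{[0,1]}\,dt),
\end{equation*}
which recasts \eqref{eqn:curve} as a bilinear form on $\mu$ with an oscillatory kernel. The non-degeneracy assumption \eqref{eqn:non-degenerate} together with van der Corput's lemma yields $|\widehat\sigma(\xi)|\lesssim(1+|\xi|)^{-1/d}$; combined with the energy identity $I_\alpha(\mu)=c_\alpha\int|\widehat\mu(\xi)|^2|\xi|^{\alpha-d}\,d\xi$, this pointwise bound already produces a non-sharp decay. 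Reaching the sharp exponent $\delta(\alpha)$ requires exploiting the anisotropic oscillation of $\widehat\sigma$, not merely its absolute value.

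My plan is to reduce \eqref{eqn:curve} to restriction estimates for the non-degenerate curve $\gamma$ and then iterate. A Littlewood--Paley decomposition $\mu=\sum_k\mu_k$ with $\supp\widehat{\mu_k}\subset\{|\xi|\sim 2^k\}$, combined with the fact that $\lambda\gamma(I)$ sits at frequency $\sim\lambda$, almost orthogonally isolates the single frequency piece $\mu_\lambda$ at $k\approx\log_2\lambda$. For this piece, $TT^\ast$ and duality convert the estimate into an $L^p\to L^2$ extension-type inequality for $\gamma$, for which the restriction theory of non-degenerate space curves (Drury's theorem and its refinements in \cite{Dr,BL,DrMa,HaLe,BOS1}) provides sharp bounds after an anisotropic rescaling bringing the relevant arc to unit scale. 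A single such application, paired with Parseval and the energy bound restricted to $|\xi|\sim\lambda$, gives the linear exponent $(\alpha-d+2)/2$ in the regime $d-1\leq\alpha<d$. In the remaining range $0<\alpha<d-1$, I iterate the single-scale estimate: decompose $\mu$ at spatial scale $\lambda^{-1/d}$ matching the Knapp boxes $\lambda^{-1}\times\lambda^{-2/d}\times\cdots\times\lambda^{-1/d}$ of $\gamma$, apply the single-scale bound on each piece, rescale, and repeat. Each iteration consumes one dimension of anisotropy, producing the integer $[d-\alpha]$ in the denominators of $\delta(\alpha)$, while the residual anisotropy after $[d-\alpha]+1$ steps yields the fractional contribution $\langle d-\alpha\rangle$. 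The two quantities in the maximum correspond to two distinct interpolations---one against an endpoint extension bound, the other against a Knapp-type $L^\infty$ estimate---each dominant on a different sub-interval.

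The main obstacle I foresee is the bookkeeping of the iterated anisotropic rescalings. One must verify that \eqref{eqn:non-degenerate} is preserved with uniform quantitative constants through each step, carefully transfer the energy bound for $\mu$ across scales so that the gain from restriction exactly compensates the anisotropic distortion, and optimize the interpolation parameters so that the resulting exponent matches $\delta(\alpha)$ precisely. Particular attention will be needed at the critical values $\alpha=d-k$ for integer $k$, where the two branches of the maximum coincide and $\delta(\alpha)$ is continuous but not smooth; there a delicate logarithmic summation over dyadic scales will be required to avoid losses.
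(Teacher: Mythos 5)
Your opening identity and the van der Corput estimate $|\widehat\sigma|\lesssim(1+|\xi|)^{-1/d}$ are correct, and the general plan of reducing to restriction-type estimates for $\gamma$ and exploiting anisotropic rescaling correctly identifies where the exponent $\delta(\alpha)$ should come from. But the proposal has two genuine gaps that would prevent it from reaching the stated result.

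First, the iteration you describe --- decompose $\mu$ at spatial scale $\lambda^{-1/d}$ against the Knapp boxes of $\gamma$, apply a single-scale bound, rescale, repeat --- is in essence a projection/slicing argument. The paper explicitly flags (in Section~\ref{sec:osc}, just before the statement of Theorem~\ref{thm:osc-est}) that this kind of argument works when $\alpha=d$, because then $d\mu=f\,dx$ with $f$ bounded and one can project, \emph{but it breaks down for a general $\alpha$-dimensional measure $\mu$}. The whole technical apparatus of the paper in Section~\ref{sec:pf multi} --- the families $\Gamma(k,\epsilon)$ of renormalized arcs, Lemma~\ref{lem:rescale} tracking how $\langle\mu\rangle_\alpha$ degrades under the anisotropic dilation $D_h^k$, the $k$-linear separated estimates (Lemma~\ref{lem:l2}, Proposition~\ref{pro:l2fractal}) and the bootstrap on the induction quantity $Q_\lambda$ via Lemma~\ref{lem:multidecomp} --- exists precisely to replace the naive iteration with an argument that survives for fractal $\mu$. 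You would need to supply a substitute for this; absent that, the claim that the iteration "consumes one dimension of anisotropy per step" is not established. Note also that the references you cite for the single-scale input (\cite{Dr,BL,DrMa,BOS1}) are $L^p\to L^q(dx)$ restriction theorems; the ones needed here are the $L^p\to L^q(d\mu)$ variants for $\alpha$-dimensional $\mu$ from \cite{HaLe}, which are themselves proved by the multilinear method, so this is not a side issue.

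Second, even in the regime $d-1\le\alpha<d$, a "single application" of curve restriction with Parseval does not yield the sharp exponent $(\alpha-d+2)/2$. The linear $L^2\to L^q(d\mu)$ estimate (Theorem~\ref{thm:osc-est} with $\ell=0$) gives $\kappa=\frac12-\frac\alpha q$ only for $q\ge 2\beta_d(\alpha)=2\alpha+d(d-1)$, which at that endpoint leaves a strictly larger $\kappa$ than $\frac{d-\alpha}{4}$. The paper closes the gap at small $q$ via a dedicated bilinear estimate of Erdo\u{g}an type (Theorem~\ref{thm:erd-ober}), proved with a Whitney decomposition along the curve and transversality bounds (Section~\ref{sec:pf bi}); this bilinear mechanism is entirely missing from your outline, and without it the exponent $\frac{d-\alpha}{4}$, hence $\delta=\frac{\alpha-d+2}{2}$, is out of reach. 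The remaining structural issue --- that the theorem is phrased with the hypothesis $I_\alpha(\mu)=1$ rather than a Frostman growth bound --- is handled in the paper by Wolff's decomposition Lemma~\ref{decomp} together with the duality reduction of Lemma~\ref{l222}; your proposal does not mention how to pass between the energy normalization and the growth condition \eqref{eqn:alpha-diml measure} that the restriction machinery actually uses, and a cavalier Littlewood--Paley decomposition of $\mu$ alone does not accomplish this.
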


For the case $d-1\leq\alpha<d$ the estimate is sharp except for the issue of the endpoint.  
But for the other case there is a gap between the
bound \eqref{eqn:curve} and the upper bounds which are obtained by
considering specific test examples. 
When $0 <\alpha\le 1$ we see from Theorem 1 in \cite{Er} that 
\eqref{eqn:curve} holds with $\delta \le\delta(\alpha)= \min (\alpha, 1/d)$ and this is optimal. 
(See Proposition \ref{pro:MSnece} for the upper bounds of $\delta$.)

In order to prove \eqref{eqn:curve},  instead of finiteness of
$\alpha$-dimensional energy $I_\alpha(\mu)$, it is convenient to
work with a growth condition on $\mu$.  We assume that there exists
a constant $C_\mu$, independent of $x$ and $r$, such that
\begin{align}\label{eqn:alpha-diml measure}
\mu(B(x,r)) \leq C_{\mu} r^{\alpha} \quad\textrm{for all } x \in \mathbb{R}^{d} \textrm{ and } r > 0.
\end{align}
It is clear  that \eqref{eqn:alpha-diml measure} implies that
$I_{\alpha-\epsilon}(\mu) < \infty$ for any $\epsilon >0$. The
converse is essentially true up to a logarithmic loss (for example,
see Lemma \ref{decomp}). For  $\mu$ satisfying \eqref{eqn:alpha-diml
measure} we set
\begin{align}\label{munorm}
    \langle\mu \rangle_\alpha = \sup_{(x,r)\in \mathbb R^d\times \mathbb R_+} {r^{-\alpha}} \mu(B(x,r)).
\end{align}

For the integral in the left hand side of \eqref{eqn:curve} it doesn't
seem easy to make use of the geometric feature of the curve $\gamma$. So we consider a
dual form which looks like Fourier restriction estimate. 
In fact, \eqref{eqn:curve} is equivalent to the estimate
 \[|\int
\widehat{g}d\mu| \le C\lambda^{\frac12(1 - \delta)}\|g\|_2\] when
$g$ is supported in $\lambda\gamma+O(1)$, the $O(1)$-neighborhood of the curve
$\lambda\gamma$. This can also be generalized by allowing
different orders of integrability.  We investigate
$\kappa=\kappa(q)$ for which
\begin{equation}\label{eqn:l2q}
    \|\widehat{g}\|_{L^{q}(d\mu)} \leq C \lambda^{\kappa} \|g\|_{L^2}
\end{equation}
holds for some $C>0$. This also has its own interest and for the
case of the circle the optimal results were obtained by  Erdo\u{g}an \cite{Er}.

\begin{figure}[t]\label{fig1}
\centerline{\epsfig{file = 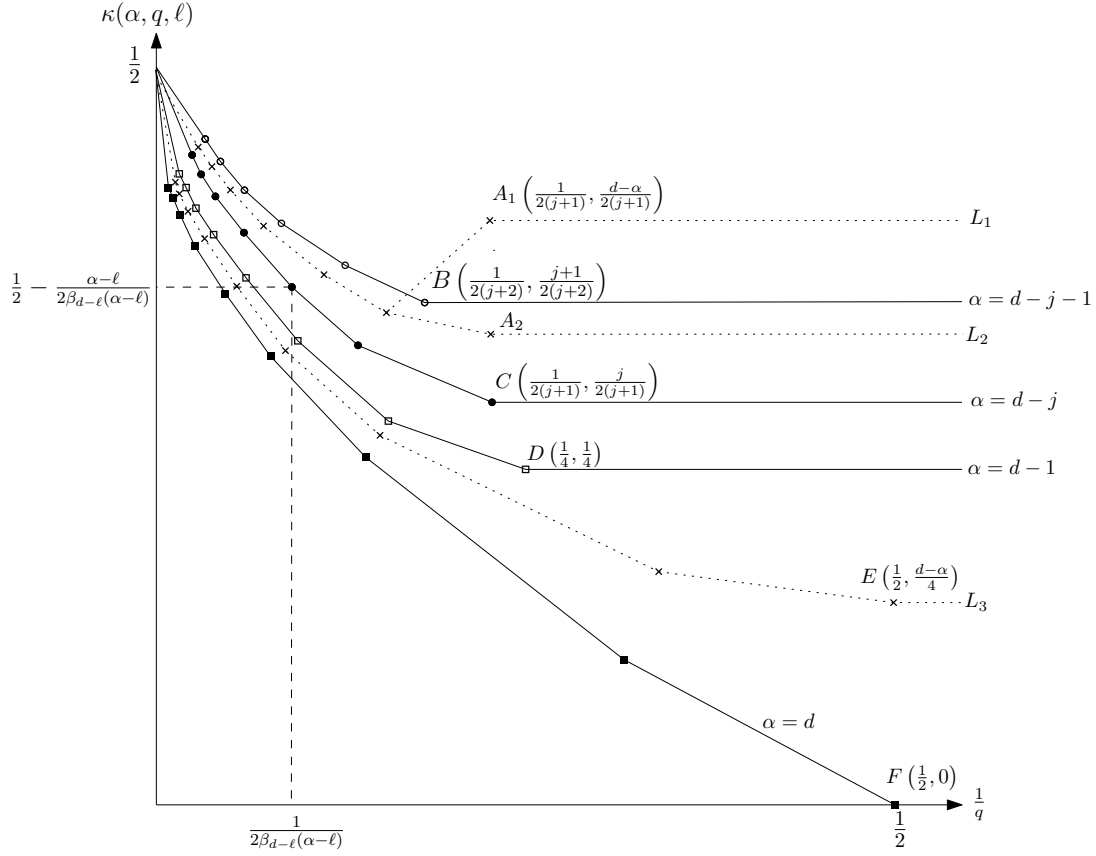, width=0.95\textwidth}}
\caption{\small 
The solid lines represent the value of $\kappa(\alpha,q,\ell)$ as a function of $1/q$ for specific values of $\alpha$, namely, $\alpha=d,\,d-1,\,d-j,\,d-j-1$ while $j$ is an integer, $1\le j<d-1$.
For integer $\alpha$, $\kappa(\alpha,q,\ell)$ decreases as so does $q$. 
The dotted graphs $L_1,L_2$ give the cases of non-integer $\alpha$ satisfying $d-j-1<\alpha<d-j$ and $1 \le j = [d-\alpha]$. 
If $\alpha <d-j-1 + ({j+1})/({j+2})$, $\kappa(\alpha,q,\ell)$ may increase. So, $\kappa(\alpha,q,\ell)$ may exceed $\kappa(d-j-1,q,\ell)$ at $A_1$. (See $L_1$.) However, 
if $\alpha$ is close enough to $d-j$,  the line of the shape like  $L_2$ appears. 
The dotted graph $L_3$ shows the case of non-integer for $\alpha \in (d-1,\,d)$. In this case, $\kappa(\alpha,q,\ell)$  always decreases in $q$. 
Except for $A_1,A_2,B,\dots,F$,  every marked dot  is given by  
$(\frac1q,\kappa(\alpha,q,\ell)) =$ $(\frac1{2\beta_{d-\ell}(\alpha-\ell)}, \frac12 - \frac{\alpha-\ell}{2\beta_{d-\ell}(\alpha-\ell)}).$
}
\end{figure}

 Now, to facilitate the statements of our results,  we define some
notations.
  For $j= 1,\ldots, d $ and $0<\alpha \le j$  we set
    \[ \beta_j(\alpha)=([j-\alpha]+1)\alpha+\frac{(j-1-[j-\alpha])(j-[j-\alpha] )}{2}.\]
For a fixed  $0<\alpha \le d$, we define the closed intervals $J(\ell)$,
$\ell = -1,0,1,\dots,d-1-[d-\alpha]$,   by setting
    \begin{align*}
    J(\ell ) =
    \begin{cases}
    \, [\, 2\beta_d(\alpha), \,\infty \, ], & \textrm{if } \, \ell=-1,\\
    \, [\, 2 \beta_{d-\ell-1}(\alpha-\ell-1), \,2 \beta_{d-\ell}(\alpha-\ell)\, ],
 &\textrm{if }\, 0 \le \ell \le d-3-[d-\alpha], \\
  \, [\, 2 ([d-\alpha]+1), \,2 \beta_{d-\ell}(\alpha - \ell )\,], &\textrm{if }\, \ell = d-2-[d-\alpha],\\
  \, [\,1,\, 2([d-\alpha]+1)\,], &\textrm{if }\, \ell = d-1-[d-\alpha].
 \end{cases}
    \end{align*}
Note that $\beta_{d-\ell}(\alpha-\ell)$ decreases as $\ell$ increases.
For each $\ell = -1,0,1,\dots,d-1-[d-\alpha]$ and $q\in  J(\ell)$, we also set 
\begin{align*}
\kappa(\alpha,q,\ell)=
\begin{cases}
\,   \frac 12 - \frac \alpha q,
             &\textrm{if }\,\,   \ell=-1, \\
\, \frac 12  - \frac{\alpha-\ell}q + \frac1{d-\ell} \big(  \frac{\beta_{d-\ell}(\alpha-\ell)}q - \frac 12 \big),
              &\textrm{if }\,\,     0\le \ell \le d-3-[d-\alpha], \\
\, 
\frac 12 \! - \! \frac{\alpha-\ell}q \! + \! \frac1{\mathfrak J_\ell} \big(  \frac{\beta_{d-\ell}(\alpha-\ell)}q \! - \! \frac 12 \big),		              
&\textrm{if }\,\,		\ell = d-2-[d-\alpha], \\
\, \min \big( \frac{d-\alpha}{4}, \frac{d-\alpha}{2([d-\alpha]+1)}\big)  , &\textrm{if }\,\, \ell=d-1-[d-\alpha], 
\end{cases}
\end{align*}
where $\mathfrak J_\ell = d-\ell = 2$ if $[d-\alpha]=0$, and $\mathfrak J_\ell=|J(d-2-[d-\alpha])|/2$ if $[d-\alpha] \ge 1$.
Here $|J(\ell)|$ denotes the length of $J(\ell)$. 
It should be
noted that, for given $\alpha$ and $\ell$, $\kappa(\alpha,q,\ell)$
is defined only for $q\in  J(\ell)$.
(See Figure \ref{fig1}.)

Our second result reads as the following from which Theorem
\ref{thm:main} is to be deduced later.

\begin{theorem}\label{thm:L^2-L^q est}
Let $0 < \alpha \le d$, and let $\gamma$ be given as in Theorem
\ref{thm:main}. Suppose that $\mu$ is supported in $B(0,1)$ and
satisfies \eqref{eqn:alpha-diml measure}. Then
\[  \|\widehat{g}\|_{L^{q}(d\mu)} \leq C\langle\mu\rangle_\alpha^{\,\,\frac1q} \lambda^{\kappa(\alpha,q,\ell)+\epsilon} \|g\|_{L^2}\]
holds for any $\epsilon
>0$ and for $q\in J(\ell)$, $\ell = -1,0,1,\dots,d-1-[d-\alpha]$. 
\end{theorem}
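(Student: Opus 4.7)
The plan is to prove Theorem \ref{thm:L^2-L^q est} by induction on the ambient dimension combined with a dyadic decomposition of $\gamma$ at scale $\lambda^{-1/(d-\ell)}$, followed by Riesz--Thorin interpolation between the endpoints of each interval $J(\ell)$. Since $\kappa(\alpha,q,\ell)$ is affine in $1/q$ on each $J(\ell)$, it suffices to establish the bound at the two endpoints of each $J(\ell)$ and interpolate; the fact that adjacent intervals $J(\ell)$ and $J(\ell+1)$ share a common endpoint ensures that the piecewise estimates match up consistently.

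For the endpoint $q = 2\beta_{d-\ell}(\alpha-\ell)$ with $\ell \ge 0$, the idea is to partition $[0,1]$ into arcs $I_\nu$ of length $\lambda^{-1/(d-\ell)}$, write $g = \sum_\nu g_\nu$ with $g_\nu$ supported in the piece of the tubular neighborhood $N(\lambda\gamma)$ over $I_\nu$, and perform a Frenet-frame parabolic rescaling centred at the midpoint $t_\nu$ of $I_\nu$. By the nonvanishing torsion hypothesis \eqref{eqn:non-degenerate}, this rescaling maps each piece onto the $O(1)$-neighborhood of a nondegenerate curve in $\mathbb{R}^{d-\ell}$ (with the remaining $\ell$ directions contributing only trivially), allowing an inductive application of the theorem in dimension $d-\ell$ with effective energy parameter $\alpha-\ell$. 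A dyadic pigeonhole on the density of $\mu$ along slabs transverse to the osculating $\ell$-flat provides the effective $(\alpha-\ell)$-growth condition needed, up to a $\lambda^\varepsilon$ loss. Summing over $\nu$ using almost orthogonality of the $g_\nu$ on the $L^2$ side (their Fourier supports lie in nearly disjoint regions of the tube) and the $\alpha$-growth of $\mu$ to accumulate the $L^q(d\mu)$ pieces then yields the desired endpoint bound, with the exponent $\kappa$ assembled in the form prescribed by the formula.

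The base cases reduce to elementary arguments: for $\ell=-1$ the support of $g$ lies in a tube of volume $\lesssim \lambda$, so Cauchy--Schwarz and $\mu(\mathbb{R}^d) \lesssim \langle\mu\rangle_\alpha$ immediately give $\kappa(\alpha,q,-1) = 1/2 - \alpha/q$; the special endpoint $\ell = d-1-[d-\alpha]$, where the natural scaling saturates (so that $d-\ell = [d-\alpha]+1$), is handled by a direct argument at the finest relevant scale, combining the growth of $\mu$ on balls of radius $\lambda^{-1/([d-\alpha]+1)}$ with Plancherel on the tube. The main obstacle, as I see it, is making rigorous the passage from the $\alpha$-growth of $\mu$ in $\mathbb{R}^d$ to an effective $(\alpha-\ell)$-growth on typical $\ell$-codimensional slabs: this requires a careful stratification of $\mu$ into dyadic density layers and is where the $\lambda^\varepsilon$ loss originates; the precise combinatorics of $\beta_{d-\ell}(\alpha-\ell)$ then arise from optimally balancing densities across the $d-\ell$ curvature scales of the rescaled curve.
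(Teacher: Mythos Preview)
Your proposal has genuine gaps. First, the $\ell=-1$ case is not an elementary base case: the bound $\kappa(\alpha,q,-1)=\tfrac12-\tfrac\alpha q$ for $q\ge 2\beta_d(\alpha)$ is exactly the sharp restriction estimate for nondegenerate curves with respect to $\alpha$-dimensional measures, and Cauchy--Schwarz plus the tube volume only gives $\kappa=\tfrac12$ with no $\alpha/q$ gain. More seriously, the step you yourself flag as ``the main obstacle''---passing from $\alpha$-growth of $\mu$ in $\mathbb R^d$ to an effective $(\alpha-\ell)$-growth on $(d-\ell)$-dimensional slices so as to invoke the theorem in lower dimension---does not close by a dyadic pigeonhole: an $\alpha$-dimensional measure need not disintegrate into $(\alpha-\ell)$-dimensional slices, and summing over density layers will not in general reproduce the precise exponent $\beta_{d-\ell}(\alpha-\ell)$. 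The paper does \emph{not} induct on dimension. It keeps $\mu$ in $\mathbb R^d$ throughout and proves $(d-\ell)$-linear $L^2\to L^q(d\mu)$ estimates directly (Proposition~\ref{pro:l2fractal}), by freezing $\ell$ coordinates to obtain a $k$-linear Plancherel bound (Lemma~\ref{lem:l2}) and then using $|\phi_\lambda|\ast\mu\lesssim\langle\mu\rangle_\alpha\lambda^{d-\alpha}$. The behaviour of $\mu$ under the anisotropic dilations $D_h^k$ is controlled by Lemma~\ref{lem:rescale}, which produces the factor $|h|^{-\beta_k(\alpha-d+k)}$; this feeds an induction on \emph{scales} for the quantity $Q_\lambda$ of \eqref{eqn:aqlambda} (via Lemma~\ref{lem:induction} and the multilinear decomposition Lemma~\ref{lem:multidecomp}), yielding Theorem~\ref{thm:osc-est} and Corollary~\ref{cor:osc-interp}. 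The passage between \eqref{eqn:l2q} and the oscillatory estimate \eqref{osc} is Lemma~\ref{lem:equiv}.

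Second, your outline omits the bilinear input entirely. When $[d-\alpha]=0$ the values $\kappa(\alpha,q,d-2)=\tfrac14+\tfrac{d-\alpha-1}{2q}$ on $J(d-2)$ and $\kappa(\alpha,q,d-1)=\tfrac{d-\alpha}{4}$ on $J(d-1)$ are strictly smaller than what the multilinear/rescaling machinery alone yields; in the paper they come from Theorem~\ref{thm:erd-ober}, proved in Section~\ref{sec:pf bi} by adapting Erdo\u{g}an's Whitney-decomposition bilinear method. Without this ingredient the stated $\kappa$ for $\ell\in\{d-2,d-1\}$ in the range $d-1\le\alpha\le d$ cannot be reached by your scheme.
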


For a given $\alpha$, the results of Theorem \ref{thm:L^2-L^q est} are sharp for $q
\in J(\ell)$, $\ell \le d-3-[d-\alpha]$   in that the value $\kappa$ can not generally be made smaller except $\epsilon$. 
 For $q\in J(\ell)$, $\ell \ge d-2-[d-\alpha]$, the results are sharp only when $[d-\alpha]=0$.
In this case $\kappa (\alpha,q,d-2) = \frac 14 + \frac{d-\alpha-1}{2q}$ for $q \in J(d-2)$, which
is obtained by adapting the bilinear argument due to Erdo\u{g}an \cite{Er1}. (See Theorem \ref{thm:erd-ober}.)
It follows by H\"older's inequality that $\kappa(\alpha,q,d-1)=\frac{d-\alpha}{4}$ for $q \in J(d-1)$.
When $[d-\alpha]\ge1$ and $\alpha$ is an integer i.e. $\alpha = d-[d-\alpha]$, we have $\mathfrak J_\ell = |J(d-2-[d-\alpha])|/2 = d-\ell$. 
For this case, $\kappa(\alpha,q,d-2-[d-\alpha])$ are sharp.
In general, $\mathfrak J_\ell = |J(d-2-[d-\alpha])|/2 \le d-\ell$ for $[d-\alpha]\ge1$.
(See Proposition \ref{pro:nece}.)

\begin{rmk}\label{rmk}
If $\ell \le d-3-[d-\alpha]$,   $\kappa(\alpha,q,\ell)$ decreases as
so does $q$. However $\kappa(\alpha,q,d-2-[d-\alpha])$ may increase
though $q$ decreases except for the case $[d-\alpha] =0$.

As shown in Section \ref{sec:mainpf}, the decay rate $\delta$ in Theorem \ref{thm:main} is determined by the minimum of
$\kappa(\alpha,q,\ell)$, which is given by
$ \frac{d-\alpha}4$ if $[d-\alpha]=0$, or $\min_{q\in J(d-2-[d-\alpha])}  \kappa(\alpha,q, \ell)$ if $[d-\alpha]\ge 1$.
(See Figure 1.)
 
\end{rmk}

Although those notations seem to be complicated, most of them are
naturally associated with the scaling structure of curves.
For example, $\beta_j(\alpha)$ generalizes the number $ \beta_d(d)=d(d+1)/2 $ 
which appears in the studies on restriction estimates for space
curves (e.g. \cite{BL, BOS2, BOS1, BOS3,
DeMu, DeWr, DrMa, DrMa1}). 
We need to use the intervals $J(\ell)$ in order to extend the estimate \eqref{osc} beyond the known range given by \eqref{range} with $p=2$.
Except for the case $\ell = d-1-[d-\alpha]$  the number
$\kappa(\alpha,q,\ell)$ is actually obtained by interpolating the estimates for $q$  at the endpoints of $J(\ell)$.

The paper is organized as follows. In Section \ref{sec:osc}, we
prove various $L^p\rightarrow L^q$ estimates for the related oscillatory
integral operators (Theorem \ref{thm:osc-est}). In Section
\ref{sec:mainpf}, Theorem \ref{thm:L^2-L^q est} will be deduced from
the estimates in Section \ref{sec:osc} and  we prove Theorem
\ref{thm:main}. In Section \ref{sec:nece}, we discuss the upper
bounds of $\delta$ and the lower bounds of $\kappa$ which appear in
Theorems \ref{thm:main} and \ref{thm:L^2-L^q est}, respectively. In
Section \ref{sec:pf multi}, we provide proofs
of the estimates in Section 2 by making use of multilinear argument
in \cite{HaLe}. Also Theorem \ref{thm:erd-ober} will be proved in Section \ref{sec:pf bi} by adapting the bilinear argument due to Erdo\u{g}an \cite{Er}.

Throughout the paper the constant $C$ may vary from line to line and in
addition to $\,\widehat{}\,\,$ we also use $\mathcal F$ to denote
the Fourier transform.

\section{Oscillatory integral operators}\label{sec:osc}

For $\lambda\ge 1$  let us consider  an oscillatory integral operator defined by
   \begin{align*}
    \mathcal E^{\gamma}_{\lambda}f(x) = a(x) \int_{I} e^{i\lambda x \cdot \gamma(t)} f(t) dt ,
    \end{align*}
where $a$ is a bounded function supported in $B(0,1)$ with $\|a\|_\infty \le 1$.   
The estimate \eqref{eqn:l2q} can be deduced from the estimate
\begin{equation}\label{osc}
\|\mathcal E^{\gamma}_{\lambda} f\|_{L^{q}(d\mu)} \lesssim \lambda^{-\vartheta} \|f\|_{L^{2}(I)}.
\end{equation}
 In fact, $\lambda \gamma(t) +O(1)$ can be foliated into a set of $O(1)$-translations of the curve $\lambda\gamma$.
Then,  a simple change of variables,  Minkowski's inequality, and Plancherel's theorem together with  \eqref{osc}  give   \eqref{eqn:l2q} with $\kappa=\frac12-\vartheta$.
The converse also can be shown  by making use of the uncertainty principle.
See Lemma \ref{lem:equiv} for the details.

In the recent paper  \cite{HaLe}, two of the authors proved that if $\mu$ and $\gamma$ satisfy \eqref{eqn:alpha-diml measure} and \eqref{eqn:non-degenerate}, respectively,
then
\begin{equation}\label{osc1}
\|\mathcal E^{\gamma}_{\lambda} f\|_{L^{q}(d\mu)} \lesssim \lambda^{-\frac{\alpha}{q}} \|f\|_{L^{p}(I)}
\end{equation}
holds for $1 \le p,q \le \infty$ satisfying $d/q \leq 1 - 1/p$, $q \ge 2 d$ and
    \begin{equation}\label{range}
    \frac{\beta_d(\alpha)}q+\frac 1 p < 1, \quad q > \beta_d( \alpha) +1 .
    \end{equation}
We refer to \cite{HaLe} and references therein for further discussions about this estimate and related results.
Then from Lemma \ref{lem:equiv} it follows that \eqref{eqn:l2q} holds with $\kappa=\frac12-\frac\alpha q$ if  $q > \max(2\beta_d(\alpha), 2d)$ and $\lambda >1$.
However this is not enough in order to obtain  the estimate \eqref{eqn:l2q} for the other $q$.
Hence we are led to investigate the estimates with $(p,q)$ which does not satisfy \eqref{range}.
It is natural to expect that the decay gets worse as $(1/p, 1/q)$ gets away from the range \eqref{range}.
If $\alpha=d$, then by  the  Lebesgue-Radon-Nikodym theorem we have $d\mu=f(x)
dx$ and by the  Lebesgue differentiation theorem and \eqref{eqn:alpha-diml measure} it follows that $f$ is
a bounded function. Hence, by projection argument, it is not
difficult to see that, for $k=0, \dots, d$,
    \[
    \|\mathcal E_\lambda^\gamma f\|_{L^q(d\mu)}\le C\lambda^{-\frac{k}q}\|f\|_p
    \]
whenever $\frac{\beta_k(k)}q+\frac1p \le 1 $. But this argument
readily fails with a general measure $\mu$. To get around this difficulty we make use of
the induction argument based on multilinear estimates (see \cite{HaLe} and \cite{BoGu}).

The following is an extension of the earlier result in \cite{HaLe}.

\begin{theorem}\label{thm:osc-est}
Let $\gamma$ and $\mu$ be given as in Theorem \ref{thm:L^2-L^q est}.
For each integer $\ell = 0, 1,\dots, d-1-[d-\alpha]$, there exists a constant $C_\ell$ such that
    \begin{equation}\label{eqn:osc1}
    \|\mathcal E_\lambda^{\gamma} f\|_{L^q(d\mu)}\le
    C_\ell \,\langle\mu\rangle_\alpha^{\,\,\frac1q}\, \lambda^{-\frac{\alpha-\ell}q}\|f\|_{L^{p}(I)}
    \end{equation}
holds for $f \in L^{p}(I)$ and $\lambda \ge 1$ whenever $(d-\ell)/q + 1/p \leq 1 $, $q \ge 2(d-\ell)$ and
    \begin{equation}\label{eqn:range}
    \frac{\beta_{d-\ell}(\alpha-\ell)}q+\frac1p < 1, \quad q > \beta_{d-\ell}(\alpha-\ell) +1 .
    \end{equation}
\end{theorem}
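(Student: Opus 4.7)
My plan is to prove Theorem \ref{thm:osc-est} by combining a Bourgain-Guth style broad-narrow decomposition with the multilinear $L^2$ restriction estimate for nondegenerate curves used in \cite{HaLe}. The base case $\ell = 0$ is precisely the estimate \eqref{osc1} in the range \eqref{range}, which is already proved in \cite{HaLe}, so the task is to handle $\ell \ge 1$.

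Fix a small $\epsilon > 0$, partition $I$ into intervals $\{I_\tau\}$ of length $\lambda^{-\epsilon}$, and set $f_\tau = f\chi_{I_\tau}$. At each $x \in \supp(\mu)$ one is in one of two cases. In the \emph{broad} case, there exist $d-\ell$ intervals $\tau_1,\dots,\tau_{d-\ell}$ with tangent frames $(\gamma'(t_{\tau_1}),\dots,\gamma^{(d-\ell)}(t_{\tau_{d-\ell}}))$ quantitatively independent after projection to a suitable $(d-\ell)$-plane, and
$$|\mathcal E_\lambda^\gamma f(x)| \lesssim \max_{\tau_1,\dots,\tau_{d-\ell}} \prod_{i=1}^{d-\ell}|\mathcal E_\lambda^\gamma f_{\tau_i}(x)|^{1/(d-\ell)}.$$
Here I would apply the $(d-\ell)$-linear estimate: a standard $TT^*$ computation using the determinant lower bound together with $\mu(B(x,r)) \le \langle\mu\rangle_\alpha r^\alpha$ yields the desired decay $\lambda^{-(\alpha-\ell)/q}$ at the endpoint $q = 2(d-\ell)$, and interpolation with the trivial $L^\infty \to L^\infty$ bound extends to the whole range \eqref{eqn:range}. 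In the \emph{narrow} case, all significant $\tau$'s have $\gamma'(t_\tau)$ concentrated in a $\lambda^{-\epsilon}$-slab of dimension at most $d-\ell-1$, and a parabolic rescaling adapted to $\gamma$ reduces the problem to the same estimate in a lower ambient dimension where the $\ell=0$ version (from \cite{HaLe}) applies directly. Iterating the broad-narrow scheme $O(1/\epsilon)$ times and using the strict inequalities in \eqref{eqn:range} to absorb the accumulated $\lambda^{O(\epsilon)}$ losses then yields \eqref{eqn:osc1}.

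The main obstacle is the bookkeeping in the narrow step: one must verify that after the parabolic rescaling the pulled-back measure still satisfies an $\alpha$-type growth condition on the new scale with uniformly controlled constant, and that the reduction of effective dimension matches the scaling of $\lambda$ in a way that produces the exponent $\beta_{d-\ell}(\alpha-\ell)$. The specific form of $\beta_j(\alpha)$ -- mixing the integer part $[j-\alpha]$ and the fractional part $\langle j-\alpha\rangle$ -- is tuned precisely so that this recursive computation closes on the critical line $\beta_{d-\ell}(\alpha-\ell)/q + 1/p = 1$ for every admissible $\ell$.
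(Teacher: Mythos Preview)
Your overall strategy---$(d-\ell)$-linear estimate for the transverse part, rescaling for the concentrated part---is the same as the paper's, and the broad-case description is essentially correct (the paper obtains the $(d-\ell)$-linear $L^2(d\mu)$ bound by freezing the last $\ell$ coordinates, applying Plancherel in $\mathbb R^{d-\ell}$, and then using the Fourier support together with $\langle\mu\rangle_\alpha$; this is Proposition~\ref{pro:l2fractal}). The problem is your narrow case.

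For a one-dimensional object like a curve there is no ``lower ambient dimension'' to descend to. The narrow configuration is simply that all dominant pieces $f_\tau$ lie in a single short interval of length $h$; after the anisotropic rescaling $t\mapsto ht+\tau$ one lands back in the \emph{same} class of problems (curves in $\Gamma(d-\ell,\epsilon)$, measures in $\mathfrak M(\alpha,1)$), not in a $(d-1)$-dimensional one, and the $\ell=0$ result from \cite{HaLe} is never invoked for $\ell\ge 1$. What actually closes the argument is an induction-on-scales inequality: if $Q_\lambda$ denotes the best constant in \eqref{eqn:osc1} over all admissible $\gamma,\mu,a$, then the rescaling of Lemma~\ref{lem:rescale} gives
\[
\|\mathcal E_\lambda^\gamma f_\tau\|_{L^q(d\mu)}\lesssim \langle\mu\rangle_\alpha^{1/q}\,h^{\,1-\frac1p-\frac{\beta_{d-\ell}(\alpha-\ell)}{q}}\,Q_\lambda\,\|f_\tau\|_p,
\]
and the exponent of $h$ is \emph{positive} exactly on the open range \eqref{eqn:range}. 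Combined with the multilinear bound this yields $Q_\lambda\le \tfrac12 Q_\lambda + C\lambda^{-(\alpha-\ell)/q}$, hence \eqref{eqn:osc1}. Your sentence ``reduces the problem to the same estimate in a lower ambient dimension where the $\ell=0$ version applies'' is the gap: the measure $\mu$ still lives in $\mathbb R^d$ after rescaling, and there is no mechanism that converts an $\alpha$-dimensional measure in $\mathbb R^d$ into one in $\mathbb R^{d-\ell}$ to which the $\ell=0$ theorem could be applied. The role of $\beta_{d-\ell}(\alpha-\ell)$ is precisely to record how $\langle\mu\rangle_\alpha$ transforms under the anisotropic dilation $D_h^{d-\ell}$ (Lemma~\ref{lem:rescale}), not to match a reduction of ambient dimension.

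A secondary point: your narrow criterion ``$\gamma'(t_\tau)$ concentrated in a $\lambda^{-\epsilon}$-slab of dimension at most $d-\ell-1$'' is imported from the hypersurface setting and does not fit here; for curves the only narrow scenario is that the contributing $t$'s lie in a short interval. Correspondingly, the paper does not iterate a two-way broad/narrow split $O(1/\epsilon)$ times but uses a single hierarchical decomposition with scales $A_1\gg\cdots\gg A_{d-\ell-1}$ (Lemma~\ref{lem:multidecomp}), which produces $d-\ell-1$ ``linear'' terms handled by the rescaling above and one $(d-\ell)$-linear term handled by Proposition~\ref{pro:l2fractal}.
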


Theorem \ref{thm:osc-est}  is  proved by routine adaptation of  the argument in \cite{HaLe}. Compared to \cite{HaLe}
the main difference here is to utilize various multilinear estimates of different degrees of multilinearity.
For completeness we provide a proof of Theorem \ref{thm:osc-est}  in Section \ref{sec:pf multi}.

\begin{rmk}\label{rmk:range}
It is easy to check that among the four conditions on $(p,q)$ above, the first two conditions become redundant for some  $\ell$. In fact, since $\beta_{d-\ell}(\alpha-\ell)>d-\ell$ if and only if $\alpha-\ell>1$, and $\beta_{d-\ell}(\alpha-\ell)+1>2(d-\ell)$ if and only if $\alpha-\ell>2$, the estimate \eqref{eqn:osc1} holds whenever
\begin{align*}
    \begin{cases}
        \frac{\beta_{d-\ell}(\alpha-\ell)}q+\frac1p < 1,\,  q > \beta_{d-\ell}(\alpha-\ell) +1,  & \text{if }\, 2<\alpha-\ell\hspace{8.05 mm}  (\text{i.e. }\ell \leq d-3-[d-\alpha]) , \\
        \frac{\beta_{d-\ell}(\alpha-\ell)}q+\frac1p < 1,\, q \ge 2(d-\ell), & \text{if }\, 1<\alpha-\ell\le 2\,\, (\text{i.e. } \ell=d-2-[d-\alpha]) , \\
        \frac{ d-\ell}{q} + \frac 1 p \le 1 ,\,  q \ge 2(d-\ell), & \text{if }\, 0 < \alpha-\ell\le 1 \,\, (\text{i.e. } \ell = d-1-[d-\alpha]) .
    \end{cases}
\end{align*}
\end{rmk}

If  $[d-\alpha] \ge 1$, we also have estimates for $p,$ $q$ satisfying  $([d-\alpha] +1)/q +1/p >1$ and $q <2 ([d-\alpha] + 1)$,  
which are given as follows. 

\begin{theorem}\label{thm:osc-est2}
Suppose that $\gamma,\mu$ are given as in Theorem \ref{thm:osc-est}. Then, there exists a positive constant $C$ such that
\begin{align*}
  \|\mathcal E^{\gamma}_{\lambda}f\|_{L^{q}(d\mu)} \le C \langle\mu\rangle_\alpha^{\,\,\frac1q}\lambda^{-\big( \frac{\alpha - d}{q} + 1- \frac 1p \big)} \|f\|_{L^{p}(I)},
\end{align*}
whenever $ ([d-\alpha]+1)^{-1}(1 -\frac 1p )\le \frac{ 1}q \le  1- \frac 1p $, $q \ge [d-\alpha]+1$ and
$\frac{\beta_{[d-\alpha] +1}(1-\langle d-\alpha \rangle)}{q} + \frac 1p < 1$.
\end{theorem}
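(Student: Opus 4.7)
The plan is to reduce the $d$-dimensional estimate to a $k$-dimensional one via decomposition and an affine rescaling. Set $k:=[d-\alpha]+1$ and $\beta:=1-\langle d-\alpha\rangle\in(0,1]$; since $[k-\beta]=k-1$, a direct computation gives $\beta_{[d-\alpha]+1}(1-\langle d-\alpha\rangle)=\beta_k(\beta)=k\beta$, so the last hypothesis of the theorem reads $\frac{k\beta}{q}+\frac{1}{p}<1$. I will partition $I$ into $\sim\lambda^{1/k}$ subintervals $J_j=[t_j,t_j+\delta]$ of length $\delta:=\lambda^{-1/k}$, decompose $f=\sum_j f_j$ with $f_j=f\chi_{J_j}$, and treat each $\mathcal E_\lambda^\gamma f_j$ separately.

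On a fixed $J_j$, I will rescale $t=t_j+\delta s$ and apply the affine change of variables $x=(M_j^T)^{-1}y$, where $M_j$ is the matrix with columns $\gamma^{(m)}(t_j)/m!$ for $m=1,\dots,d$ (invertible with uniformly bounded norm and inverse by \eqref{eqn:non-degenerate}). Using the $C^{d+1}$ Taylor expansion of $\gamma$ at $t_j$ together with the choice of $\delta$, the phase $\lambda x\cdot\gamma(t)$ becomes
\[
\lambda x\cdot\gamma(t_j)+\sum_{m=1}^k \lambda^{1-m/k}\,s^m\,y_m+O(1),
\]
where the first term is $s$-independent and the $O(1)$ remainder (comprising contributions from $m>k$, which have bounded coefficients, together with the Taylor error, which is controlled because $k\le d+1$) can be absorbed into a uniformly bounded amplitude. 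Crucially, only the first $k$ components $y'=(y_1,\dots,y_k)$ of $y$ appear in the oscillation. The pushforward $\mu_j$ of $\mu$ under $M_j^T$ inherits $\alpha$-dimensional growth uniformly in $j$, and a covering argument (covering a slab of thickness $r$ in $k$ directions and bounded in the remaining $d-k$ directions by $\sim r^{-(d-k)}$ Euclidean balls of radius $r$) shows that its projection $\nu_j$ onto $\mathbb{R}^k$ satisfies $\nu_j(B_k(y_0',r))\le C\langle\mu\rangle_\alpha\, r^{\alpha-(d-k)}=C\langle\mu\rangle_\alpha\,r^\beta$, uniformly in $j$ and $\lambda$.

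Thus the per-piece problem reduces to bounding the $L^p\to L^q(d\nu_j)$ norm of the $k$-dimensional oscillatory integral operator $g\mapsto\int_0^1 e^{i\sum_{m=1}^k \lambda^{1-m/k}\,s^m\,y_m}\,g(s)\,ds$, which is the extension operator associated to an anisotropically scaled version of the moment curve in $\mathbb{R}^k$ paired against a measure of $\beta$-dimensional growth. The condition $\frac{\beta_k(\beta)}{q}+\frac{1}{p}<1$ is precisely the critical range produced by the multilinear induction argument of Section \ref{sec:pf multi} when applied in ambient dimension $k$ with a measure of dimension $\beta$. Incorporating the $\delta$-factor from the rescaling and summing over the $\sim\lambda^{1/k}$ subintervals via Minkowski's inequality (or an $\ell^q$-version exploiting approximate orthogonality), the powers of $\lambda$ then combine to yield the claimed exponent $\lambda^{-(\frac{\alpha-d}{q}+1-\frac{1}{p})}$.

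The main obstacle I anticipate will be the reduced $k$-dimensional estimate itself. A direct invocation of Theorem \ref{thm:osc-est} in dimension $k$ is not enough: since $\alpha-\ell=\beta\le 1$ falls into the third case of Remark \ref{rmk:range}, Theorem \ref{thm:osc-est} only yields the smaller range $k/q+1/p\le 1$, rather than the full $k\beta/q+1/p<1$ that is required. To recover the full range, the multilinear induction machinery of Section \ref{sec:pf multi} must be rerun directly on the reduced $k$-dimensional problem, paying careful attention to the anisotropic scales $\lambda^{1-m/k}$ present in the phase and to the fact that $\nu_j$ has dimension $\beta\le 1$, in contrast to the higher-dimensional settings handled by Theorem \ref{thm:osc-est}.
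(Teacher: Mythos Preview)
Your approach differs substantially from the paper's, and the place you flag as the ``main obstacle'' is a genuine gap. The paper does not reduce to a $k$-dimensional problem at all. It reuses the induction-on-scales machinery of Theorem~\ref{thm:osc-est} verbatim (the quantity $Q_\lambda$, Lemma~\ref{lem:induction}, and the multilinear decomposition Lemma~\ref{lem:multidecomp}); the only change is the $k$-linear estimate feeding the multilinear term. In place of Proposition~\ref{pro:l2fractal} (which needs $\tfrac{k}{q}\le 1-\tfrac1p$, $q\ge 2k$), the paper proves a companion $k$-linear bound, Proposition~\ref{l2fractal}, valid in the complementary range $\tfrac1k(1-\tfrac1p)\le\tfrac1q\le 1-\tfrac1p$, $q\ge k$, with decay $\lambda^{-k(1-\frac1p-\frac{d-\alpha}{q})}$. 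The new input is the one-variable H\"ormander oscillatory integral (generalized Hausdorff--Young) bound $\|\mathcal E_\lambda^\gamma f\|_{L^{p'}(\mathbb R^d)}\lesssim \lambda^{-(1-\frac1p)}\|f\|_p$, which uses only $|\partial_{x_1}\partial_t(x\cdot\gamma(t))|\sim 1$; taking $k$-fold products via H\"older and interpolating against the $k$-linear $L^2$ estimate (Lemma~\ref{lem:l2}) yields the new range. Once Proposition~\ref{l2fractal} is in hand, the induction closes exactly as in Section~\ref{sec:pf multi} under the hypothesis $\tfrac{\beta_k(1-\langle d-\alpha\rangle)}{q}+\tfrac1p<1$.

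Your proposal lacks precisely this Hausdorff--Young input. Rerunning the multilinear induction in ambient dimension $k$ with a $\beta$-dimensional measure does not by itself enlarge the range: the induction still needs a $k$-linear estimate at the bottom, and the Plancherel-based one (the analogue of Proposition~\ref{pro:l2fractal}) only gives $\tfrac{k}{q}+\tfrac1p\le 1$, $q\ge 2k$ --- exactly the restriction you noted. Your dimensional reduction also produces an \emph{anisotropic} phase $\sum_{m=1}^k\lambda^{1-m/k}s^m y_m$ rather than a moment-curve extension at a single scale, so even formulating the reduced estimate precisely is delicate. Finally, the recombination step is not free: summing $\sim\lambda^{1/k}$ pieces by Minkowski loses $\lambda^{(1-1/p)/k}$, and the $L^q(d\mu)$-almost-orthogonality you invoke to avoid this is not established (and is nontrivial for a fractal measure). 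The paper sidesteps all three issues by staying in $\mathbb R^d$ and changing only the multilinear input.
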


\noindent There is no reason to believe these estimates are sharp. Particularly, if  $[d-\alpha] =0$
(this gives the condition that $1/p + 1/q \le 1$ and $q \ge 2$), Theorem \ref{thm:osc-est2} coincides with Theorem \ref{thm:osc-est} for $\ell = d-1$.
See Section \ref{sec:pf multi} for a proof,  which is based on the generalized Hausdorff-Young inequality.

By interpolating the estimates \eqref{eqn:osc1}  for which  $(1/p,1/q)$  is near the critical line one can improve the bound.
To state this, we define some notations.
In addition, let us assume that $p \le 2$ for simplicity.
For each $\alpha$ let $\mathscr A(\ell) $ be the set of $(\frac1p,\frac1q)$ such that $1\le p \le 2$ and
\begin{align*}
\begin{cases}
\, \frac{\beta_{d}(\alpha)}{q} +\frac 1 p < 1 , &\textrm{ if } \ell = -1,\\
\, \frac{\beta_{d-\ell-1}(\alpha-\ell-1)}{q} +\frac 1 p < 1 \le \frac{\beta_{d-\ell}(\alpha-\ell)}{q} +\frac 1 p,
&\textrm{ if }  \ell=0, \dots, d-3-[d-\alpha], \\
\, \frac{[d-\alpha]+1}{q} +\frac 1 p \le 1 \le \frac{\beta_{[d-\alpha]+2}(2-\langle d-\alpha\rangle)}{q} +\frac 1 p,
&\textrm{  if } \ell = d-2-[d-\alpha].
\end{cases}
\end{align*}
Let us also denote by $\mathscr A(d-1-[d-\alpha])$ the set of $(p,q)$ satisfying the condition given in Theorem \ref{thm:osc-est2} and $1\le p \le 2$. Note that $\mathscr A(d-1)$ when $[d-\alpha ]=0$ represents the line segment $1/q + 1/p=1$ and $1\le p \le2$.

By interpolating the estimates in Theorem \ref{thm:osc-est} and Theorem \ref{thm:osc-est2}, we obtain the following.
\begin{corollary}\label{cor:osc-interp} Let $\gamma$ and $\mu$ be defined as in Theorem \ref{thm:osc-est}.
Suppose \eqref{eqn:osc1} holds. 
Then, for $1\le p\le 2$, there exists a constant $C>0$ such that, for any $\epsilon > 0$,
    \begin{align}\label{eqn:osc-interp}
    \|\mathcal E^{\gamma}_{\lambda}f\|_{L^{q}(d\mu)} \leq C \langle\mu\rangle_{\alpha}^{\,\,\frac 1q}\lambda^{-\eta(\alpha,p,q,\ell)+\epsilon}    \|f\|_{L^{p}(I)},
    \end{align}
where \begin{align*}
\eta(\alpha,p,q,\ell) = \begin{cases}
\,   \frac \alpha q,
&\textrm{if }\,   (\frac1p,\frac1q) \in \mathscr A(-1), \\
\,  \frac{\alpha-\ell}q - \frac2{|J(\ell)|} \left(  \frac{\beta_{d-\ell}(\alpha-\ell)}q + \frac 1p-1 \right),
 &\textrm{if }\,    (\frac1p,\frac1q) \in \mathscr  A(\ell) ,\, 0 \le \ell \le d-2-[d-\alpha], \\
 \, \frac{\alpha - d}{q} + 1- \frac 1p, &\textrm{if }\, (\frac1p,\frac1q) \in \mathscr A(d-1-[d-\alpha]).
\end{cases}
\end{align*}
\end{corollary}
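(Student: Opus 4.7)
The argument is Riesz--Thorin interpolation of the estimates in Theorem~\ref{thm:osc-est} (at various values of the parameter $\ell$) together with Theorem~\ref{thm:osc-est2}. The key observation is that $\eta(\alpha,p,q,\ell)$ is an affine function of $(1/p,1/q)$ on each strip $\mathscr A(\ell)$, which is exactly the functional form Riesz--Thorin produces when one interpolates between two decay estimates whose exponents are linear in $(1/p,1/q)$.

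The two extreme cases require no interpolation. For $\ell = -1$, the condition $(1/p,1/q) \in \mathscr A(-1)$ is $\frac{\beta_d(\alpha)}{q} + \frac{1}{p} < 1$; combined with Remark~\ref{rmk:range}, this places $(1/p,1/q)$ in the range of Theorem~\ref{thm:osc-est} with parameter $\ell = 0$, which directly gives the decay exponent $-\alpha/q$. For $\ell = d-1-[d-\alpha]$, the set $\mathscr A(\ell)$ is by definition the region covered by Theorem~\ref{thm:osc-est2}, so that theorem applies directly.

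For the intermediate values $0 \le \ell \le d-2-[d-\alpha]$, the set $\mathscr A(\ell)$ is a closed strip bounded by the two critical lines
\[
L_\ell^{-}:\ \frac{\beta_{d-\ell}(\alpha-\ell)}{q} + \frac{1}{p} = 1, \qquad L_\ell^{+}:\ \frac{\beta_{d-\ell-1}(\alpha-\ell-1)}{q} + \frac{1}{p} = 1,
\]
where $L_\ell^{+}$ is replaced by $\frac{[d-\alpha]+1}{q} + \frac{1}{p} = 1$ when $\ell = d-2-[d-\alpha]$. Given a target point $P = (1/p,1/q) \in \mathscr A(\ell)$ and a small $\varepsilon' > 0$, I would select two endpoints $E_0$ slightly inside $L_\ell^{-}$ (where Theorem~\ref{thm:osc-est} at parameter $\ell$ applies and furnishes the decay $\lambda^{-(\alpha-\ell)/q_0}$) and $E_1$ slightly inside $L_\ell^{+}$ (where Theorem~\ref{thm:osc-est} at parameter $\ell+1$ applies, or Theorem~\ref{thm:osc-est2} does in the boundary subcase). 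Arranging the line through $E_0$ and $E_1$ to pass through $P$ expresses $P = (1-\theta)E_0 + \theta E_1$ for some $\theta \in (0,1)$.

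Riesz--Thorin then yields \eqref{eqn:osc-interp} with the interpolated decay exponent, which is the convex combination of the endpoint exponents. Since both endpoint exponents are affine in $(1/p,1/q)$, so is the interpolated exponent on $\mathscr A(\ell)$; matching the boundary values on $L_\ell^{-}$ and $L_\ell^{+}$ identifies this affine function with $\eta(\alpha,p,q,\ell)$. The $\epsilon$ in the conclusion absorbs the $\varepsilon'$ displacement needed to satisfy the strict-inequality hypothesis of \eqref{eqn:range}. The main obstacle is the algebraic bookkeeping: one must verify that the coefficient $\frac{2}{|J(\ell)|}$ in the definition of $\eta$ is precisely the slope of the interpolated exponent across the strip. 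For $0 \le \ell \le d-3-[d-\alpha]$ this reduces to the identity $|J(\ell)| = 2\beta_{d-\ell}(\alpha-\ell) - 2\beta_{d-\ell-1}(\alpha-\ell-1)$; for $\ell = d-2-[d-\alpha]$ one uses $|J(\ell)| = 2\beta_{d-\ell}(\alpha-\ell) - 2([d-\alpha]+1)$ together with the arithmetic identity $\alpha - d + [d-\alpha] + 1 = 1 - \langle d-\alpha\rangle$ to match the Theorem~\ref{thm:osc-est2} exponent on the outer boundary.
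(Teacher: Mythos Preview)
Your approach is correct and is precisely what the paper intends: the paper offers no proof beyond the single sentence ``By interpolating the estimates in Theorem~\ref{thm:osc-est} and Theorem~\ref{thm:osc-est2}, we obtain the following,'' and your Riesz--Thorin argument between the critical lines $L_\ell^-$ and $L_\ell^+$ (using parameter $\ell$ on one side and $\ell+1$ or Theorem~\ref{thm:osc-est2} on the other) is exactly the interpolation the authors have in mind. Your verification that the affine interpolant matches the boundary values $(\alpha-\ell)/q$ and $(\alpha-\ell-1)/q$ via the identities for $|J(\ell)|$ is the right bookkeeping, and your handling of the $\epsilon$-loss to accommodate the open condition in \eqref{eqn:range} is the standard device.
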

Note that if $0 < \alpha \le 1$, we have only $\ell = 0$.
In this case, there is nothing to interpolate.
The results in Corollary \ref{cor:osc-interp} are sharp for $-1 \le \ell \le d-3-[d-\alpha]$ except $\epsilon$-loss. This can be shown
by the same examples which are used for the proof of Proposition \ref{pro:nece}.


\section{Proof of Theorem \ref{thm:main} and \ref{thm:L^2-L^q est}}\label{sec:mainpf}

As mentioned in the previous section, we will apply the decay
estimate for the related oscillatory integral operator to obtain
\eqref{eqn:l2q}. In this section we may assume that $\gamma$ is
close to $\gamma_\circ^d$ so that
 $\|\gamma-\gamma_\circ^d\|_{C^{d+1}(I)} \le \epsilon$ for any given $\epsilon>0$.  Here
$\gamma_\circ^d$ is defined by \eqref{gamma-k}. This
 can be justified easily  by   decomposing the curve $\gamma$ into a finite union of
(sub)curves, rescaling and using Lemma \ref{lem:normalization}.

We start with observing that  \eqref{osc} is equivalent to the
estimate \eqref{eqn:l2q}.

\begin{lemma}\label{lem:equiv}
    Let $d \geq 2$ and $0 < \alpha \leq d$.
    Suppose that $\gamma \in C^{d+1}(I)$ satisfies \eqref{eqn:non-degenerate} and $\mu$ is a positive Borel measure supported in
    $B(0,1)$ satisfying \eqref{eqn:alpha-diml measure}.
    The estimate \eqref{osc} holds with $\vartheta=\frac12 -\kappa$  if and only if  the estimate \eqref{eqn:l2q} holds whenever  $g$ is supported in $\lambda\gamma(I)+O(1)$.
\end{lemma}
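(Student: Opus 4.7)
The lemma is an equivalence, and both directions will exploit a single geometric device: the tube $\lambda\gamma(I)+O(1)$ admits the parametrization $\Phi:(t,v)\mapsto\lambda\gamma(t)+v$, $t\in I$, $v\in N_t$ with $|v|\lesssim 1$, where $N_t$ is the orthogonal complement of $\gamma'(t)$.  By non-degeneracy of $\gamma$ and the normalization used in Section~\ref{sec:mainpf}, $\Phi$ is a diffeomorphism onto the tube with Jacobian $\lambda|\gamma'(t)|\sim\lambda$, so that $\|g\|_{L^2}^2\sim \lambda\int_I\!\int_{N_t,\,|v|\lesssim1}|g(\lambda\gamma(t)+v)|^2\,dv\,dt$.

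\emph{From \eqref{osc} to \eqref{eqn:l2q}.}  For $g$ supported in the tube, the change of variables yields
\[
\widehat g(x)=\lambda\int_I e^{-i\lambda x\cdot\gamma(t)}|\gamma'(t)|\,F(t,x)\,dt,\qquad F(t,x):=\int_{N_t}g(\lambda\gamma(t)+v)\,e^{-ix\cdot v}\,dv.
\]
The difficulty is that $F$ depends on $x$, so \eqref{osc} does not apply directly.  I would decouple via the Taylor expansion $e^{-ix\cdot v}=\sum_\alpha\frac{(-i)^{|\alpha|}}{\alpha!}x^\alpha v^\alpha$, which converges absolutely since $|x|\le 1$ on $\supp\mu$ and $|v|=O(1)$.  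This produces
\[
\widehat g(x)=\lambda\sum_\alpha\frac{(-i)^{|\alpha|}}{\alpha!}x^\alpha\,\mathcal E^\gamma_\lambda[f_\alpha](x),\qquad f_\alpha(t):=|\gamma'(t)|\int_{N_t}v^\alpha g(\lambda\gamma(t)+v)\,dv,
\]
where $f_\alpha$ no longer depends on $x$ and the factor $x^\alpha$ (bounded by $1$ on $\supp\mu$) is absorbed into the amplitude.  Minkowski's inequality and \eqref{osc} applied termwise, together with the Cauchy--Schwarz bound $\|f_\alpha\|_{L^2(I)}\lesssim C^{|\alpha|}\lambda^{-1/2}\|g\|_{L^2}$ coming from the Jacobian identity above, and the convergence $\sum_\alpha C^{|\alpha|}/\alpha!=e^{Cd}<\infty$, give $\|\widehat g\|_{L^q(d\mu)}\lesssim \lambda^{1/2-\vartheta}\|g\|_{L^2}$, i.e.\ \eqref{eqn:l2q} with $\kappa=\tfrac12-\vartheta$.

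\emph{From \eqref{eqn:l2q} to \eqref{osc}.}  Fix a Schwartz bump $\phi$ with $\supp\phi\subset B(0,1)$ and $|\widehat\phi|\gtrsim\|a\|_\infty$ on $\supp\mu$; such a choice is possible since $\supp\mu\subset B(0,1)$ is compact.  Given $f\in L^2(I)$, set
\[
G(\xi):=\int_I\overline{f(t)}\,\phi(\xi-\lambda\gamma(t))\,dt,
\]
which is supported in $\lambda\gamma(I)+B(0,1)$.  A direct computation yields $\widehat G(x)=\widehat\phi(x)\int_I\overline{f(t)}\,e^{-i\lambda x\cdot\gamma(t)}\,dt$, whose modulus pointwise dominates $|\mathcal E^\gamma_\lambda f(x)|$ on $\supp\mu$.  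Applying \eqref{eqn:l2q} to $G$, the matter reduces to the $L^2$ bound
\[
\|G\|_{L^2}^2=\iint f(t_1)\overline{f(t_2)}\,(\phi*\overline\phi^\vee)\!\bigl(\lambda(\gamma(t_2)-\gamma(t_1))\bigr)\,dt_1\,dt_2,
\]
and since $\phi*\overline\phi^\vee$ is Schwartz and $|\gamma'|\sim 1$ forces the kernel to concentrate on $|t_1-t_2|\lesssim\lambda^{-1}$, Schur's test gives $\|G\|_{L^2}\lesssim\lambda^{-1/2}\|f\|_{L^2}$.  Combining, $\|\mathcal E^\gamma_\lambda f\|_{L^q(d\mu)}\lesssim\lambda^{\kappa-1/2}\|f\|_{L^2}=\lambda^{-\vartheta}\|f\|_{L^2}$, which is \eqref{osc}.

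The main technical obstacle is the mixed $(t,x)$-coupling in $F(t,x)$ in the forward direction; the Taylor-series decoupling is the key device and succeeds precisely because both $\supp\mu$ and the transverse tube width are $O(1)$, so the resulting factorial series converges absolutely with a universal constant.  The reverse direction is a standard bump-localization (``uncertainty principle'') argument once one observes the $\lambda^{-1}$ scaling of the correlation kernel along $\gamma$.
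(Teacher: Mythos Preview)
Your proposal is correct, and both directions go through as you outline.  The route, however, differs from the paper's in an interesting way: you and the paper make opposite choices about \emph{where} to place the Taylor-series decoupling.

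In the forward direction the paper parametrizes the tube using a \emph{fixed} transverse frame, writing $\xi=\gamma_*(s)+(0,\mathbf v)$ with $\mathbf v\in\mathbb R^{d-1}$ (this is available because $\gamma$ has been normalized close to $\gamma_\circ^d$).  Since $\mathbf v$ is independent of $s$, the factor $e^{i\lambda x\cdot(0,\mathbf v)}$ pulls out of the $s$-integral with modulus~$1$; Minkowski in $\mathbf v$ followed by \eqref{osc} then applies directly, and no series expansion is needed.  Your choice of the moving normal space $N_t$ is what creates the $(t,x)$-coupling in $F(t,x)$, which you then undo by Taylor expansion --- a legitimate but avoidable step.

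Conversely, in the reverse direction the paper is the one that expands: it fattens $\mathcal E_\lambda^\gamma f$ to a tube integral, expands $e^{-i\lambda x\cdot(0,\mathbf v)}$ in power series, and only then changes variables to invoke \eqref{eqn:l2q}.  Your bump construction $G(\xi)=\int_I\overline{f(t)}\,\phi(\xi-\lambda\gamma(t))\,dt$ together with the Schur-test bound on $\|G\|_2$ is cleaner here and bypasses the series entirely.

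In short, both arguments are sound; the paper's fixed-frame parametrization buys a simpler forward step, while your convolution trick buys a simpler reverse step.
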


\begin{proof}
First we show that \eqref{osc} implies \eqref{eqn:l2q}. Let $g$ be a function which is supported in $\lambda\gamma(I)+O(1)$.
By the change of variables $\xi\to \lambda\xi$, we may write
\begin{align*}
\widehat{g}(x) = \int_{\gamma(I)+O(\lambda^{-1})} e^{i\lambda x \cdot \xi} \lambda^{d} g(\lambda \xi) d\xi .
\end{align*}
Let us consider a nondegenerate curve $\gamma_\ast$ which is given by extending $\gamma$ to the interval
$I_\ast := [-\frac{C}{\lambda}, 1 + \frac{C}{\lambda}]$
such that $\gamma_{\ast} = \gamma$ on $I$ and $\| \gamma_* -\gamma_\circ^d\|_{C^{d+1}( I_* )} \le \epsilon$ for a sufficiently small $\epsilon>0$.
Then it follows that, for a sufficiently large constant $C$,
\begin{align*}
\gamma(I) + O(\lambda^{-1})
\subset
\{\gamma_{\ast}(s) + (0,\mathbf{v}) : s \in {I_\ast}, \mathbf{v} \in \mathbb{R}^{d-1} \textrm{ satisfying } |\mathbf{v}| \leq C \lambda^{-1}\} .
\end{align*}

Let us define a map $\Gamma : I_\ast \times\mathbb{R}^{d-1} \to \mathbb{R}^{d}$ by
$\Gamma(s,\mathbf{v}) = \gamma_{\ast}(s) + (0,\mathbf{v}) $.
Then $|\det\frac{\partial\Gamma}{\partial(s,\mathbf{v})}|\ge c> 0$.
Thus we have 
\begin{align*}
\widehat{g} (x)
= C \int_{|\mathbf v|\lesssim \lambda^{-1}} \int_{ I_*}
e^{i \lambda x \cdot (\gamma_{\ast}(s) + (0,\mathbf{v}))} \lambda^{d} \widetilde  g(\lambda(\gamma_{\ast}(s) + (0,\mathbf{v}))) ds d\mathbf{v}
\end{align*}
with $|\widetilde g|\lesssim  |g| $. By setting $\widetilde{\gamma}(t) = \gamma_{\ast} ( ( 1 + 2C/{\lambda} )t - C/{\lambda} )$,
we have a nondegenerate curve  $\widetilde \gamma $ defined  on $I$ which is still close to $\gamma_\circ^d$. Then, it follows that
\begin{align*}
|\widehat{g}(x)|
\leq C \int_{|\mathbf v|\lesssim \lambda^{-1}}
\Big| \int_{I} e^{i \lambda x \cdot \widetilde{\gamma}(t)} \lambda^{d} \widetilde g(\lambda(\widetilde{\gamma}(t) + (0,\mathbf{v}))) dt
\Big| d\mathbf{v} .
\end{align*}

After Minkowski's inequality, we apply \eqref{osc} by freezing $\mathbf v$
 to see that
\begin{align*}
\| \widehat{g}\|_{L^{q}(d\mu)}
\leq C \int_{|\mathbf v|\lesssim \lambda^{-1}} \lambda^{-\vartheta} \|f_{\mathbf{v}}\|_{L^{2}(I)} d\mathbf{v} ,
\end{align*}
where $f_{\mathbf{v}}(t) := \lambda^{d} \widetilde g(\lambda(\widetilde{\gamma}(t)+(0,\mathbf{v})))$.
By the Cauchy-Schwarz inequality, we get
\begin{align*}
\| \widehat{g} \|_{L^{q}(d\mu)}
\leq C \lambda^{-\vartheta-\frac{d-1}{2}}
\Big( \int_{|\mathbf v|\lesssim \lambda^{-1}} \int_{I} |f_{\mathbf{v}}(t)|^{2} dt d\mathbf{v} \Big)^{\frac{1}{2}}
\leq C \lambda^{\frac{1}{2}-\vartheta}
\Big(\int_{\lambda \gamma(I) + O(1)} |g( \xi)|^{2} d\xi \Big)^{\frac{1}{2}} ,
\end{align*}
which implies \eqref{eqn:l2q}.

Conversely, let us show that \eqref{eqn:l2q} implies \eqref{osc}.
For $\mathbf{v} = (v_{2},\cdots,v_{d}) \in \mathbb{R}^{d-1}$ as above, one easily sees that
\begin{align}\label{eqn:d-1}
\Big| \int_{I} e^{i\lambda x \cdot \gamma(t)} a(x) f(t) dt \Big|
= \lambda^{d-1}
\Big| \int_{|\mathbf v|\lesssim \lambda^{-1}} \int_{I} e^{i \lambda x \cdot (\gamma(t) + (0,\mathbf{v}))} a(x) f(t) dt \, e^{-i \lambda x \cdot (0,\mathbf{v})} d\mathbf{v} \Big|.
\end{align}
By expanding into power series we  write
$
e^{-i \lambda x\cdot (0,\mathbf{v})}
= \sum_{\eta, \eta'} c_{\eta, \eta'} x^{\eta} (0,\lambda \mathbf{v})^{\eta'},
$
where $\eta, \eta'$ denote multi-indices. Then it is easy to see
$ \sum_{\eta, \eta'} |c_{\eta,\eta'}| R^{|\eta|+|\eta'|}\lesssim e^{dR^2} $.
Since $\mu$ is supported in $B(0,1)$,
setting $G_{\eta'}(t,\mathbf{v}) := f(t) (0,\lambda \mathbf{v})^{\eta'} $ gives
\begin{align*}
\eqref{eqn:d-1}
&
 \lesssim \sum_{\eta,\eta'} |c_{\eta,\eta'}| \lambda^{d-1} \Big| \int_{I} \int_{|\mathbf v|\lesssim \lambda^{-1}}
e^{i \lambda x \cdot (\gamma(t) + (0,\mathbf{v}))} G_{\eta'}(t,\mathbf{v})
d\mathbf{v} dt \Big|.
\end{align*}
By the change of variables $\xi= \gamma(t) + (0,\mathbf{v})$, we
obtain
\begin{align*}
 \Big| \int_{I} \int_{|\mathbf v|\lesssim \lambda^{-1}}
e^{i \lambda x \cdot (\gamma(t) + (0,\mathbf{v}))} G_{\eta'}(t,\mathbf{v})
d\mathbf{v} dt \Big|
= \lambda^{-d}\Big| \int_{\lambda\gamma(I) + O(1)} e^{i x \cdot \xi} g_{\eta'}(\lambda^{-1}\xi) d\xi \Big|
\end{align*}
where $g_{\eta'}(\xi) = G_{\eta'}(t(\xi),\mathbf v(\xi))$.
Hence, using \eqref{eqn:l2q} and Minkowski's inequality and reversing the change of variables we see
\begin{align*}
\Big\| \int_{I} e^{i\lambda x \cdot \gamma(t)} &a(x) f(t) dt \Big\|_{L^q(d\mu)}
\lesssim \lambda^{-1 + \kappa} \sum_{\eta,\eta'} |c_{\eta,\eta'}| \, \| g_{\eta'}(\lambda^{-1} \cdot ) \|_{L^2(\mathbb R^d)} \\
& \lesssim \lambda^{-1+\frac d2 + \kappa} \sum_{\eta,\eta'} |c_{\eta,\eta'}|
\Big(\int_{|\mathbf v|\lesssim \lambda^{-1}}\int_{I} | f(t) (0,\lambda \mathbf{v})^{\eta'} |^{2} dt d\mathbf{v}  \Big)^\frac12\\
& \lesssim \lambda^{-\frac12 +\kappa} \sum_{\eta,\eta'} |c_{\eta,\eta'}| C^{|\eta'|} {\|f \|_{L^2(I)}}
\lesssim \lambda^{\kappa - \frac 12} \| f\|_{L^2(I)}.
\end{align*}
The third inequality follows from $|(0,\lambda\mathbf{v})| \lesssim 1$.
This completes the proof. 
\end{proof}

Now we prove Theorem  \ref{thm:L^2-L^q est}.

\begin{proof}[Proof of Theorem  \ref{thm:L^2-L^q est}]
By Lemma \ref{lem:equiv} and Corollary \ref{cor:osc-interp} with $p=2$, it follows that
the estimate \eqref{eqn:l2q} holds with 
\begin{align}\label{etta}
 \kappa = \frac12 - \eta(\alpha,2,q,\ell) + \epsilon  
\end{align}
for $\epsilon>0$ and  $-1 \le \ell \le  d-2-[d-\alpha]$.
For these $\ell$,  $(\frac12,\frac1q)\in \mathscr A(\ell)$ if and only if $q\in  J(\ell)$.

Now we consider  the case $q \in J(d-1-[d-\alpha])$ ($\ell=d-1-[d-\alpha]$).
By the same argument as in the above, using Lemma \ref{lem:equiv} and Corollary \ref{cor:osc-interp} with $p=2$, we get \eqref{eqn:l2q}
with $\kappa=(d-\alpha)/q+\epsilon \ge (d-\alpha)/2([d-\alpha]+1) +\epsilon$ for
$\epsilon>0$ and $[d-\alpha]+1 \le q \le 2([d-\alpha]+1)$ (which coincides with $\mathscr A(d-1-[d-\alpha])$ for $p=2$).
Since $\mu$ is a finite measure, the range of $q$ can be extended by H\"{o}lder inequality.
Thus we obtain \eqref{eqn:l2q} with 
\begin{align}\label{eend}
 \kappa = \frac{d-\alpha}{2([d-\alpha]+1)} +\epsilon 
\end{align}
for $q \in J(d-1-[d-\alpha]) = [1,2([d-\alpha]+1)]$.
Hence \eqref{etta} and \eqref{eend} correspond to $\kappa(\alpha,q,\ell) +\epsilon$ for $[d-\alpha]\ge1$ because $|J(\ell)| = 2(d-\ell)$ for $\ell \le d-3-[d-\alpha]$.

As mentioned above, for $[d-\alpha]=0$ (and $\ell=d-2-[d-\alpha], d-1-[d-\alpha]$),   a better estimate is possible by making use of the bilinear approach
(see Erdo\u{g}an \cite{Er}). The following is proved in Section
\ref{sec:pf bi}.

\begin{theorem}\label{thm:erd-ober}
    Suppose that $d \geq 2$ and $d-1\leq \alpha \leq d$.
    Let $\gamma$, $\mu$, and $g$ be given as in Theorem \ref{thm:L^2-L^q est}.
        Then, for $\lambda > 1$, $q \geq 2$ and $\epsilon > 0$,
    there exists a constant $C >0$ such that  
    \begin{equation*}
     \|\widehat{g}\|_{L^{q}(d\mu)} \leq C\langle\mu\rangle_\alpha^{\,\,\frac1q} \lambda^{\kappa+\epsilon} \|g\|_{L^2}    \end{equation*}
    for $ \kappa=\max(\frac14 + \frac{ d -\alpha-1}{2q},\,\frac12 + \frac{d-\alpha-2}{q}) $.
 \end{theorem}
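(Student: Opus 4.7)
The plan is to adapt Erdo\u{g}an's bilinear approach (as used for the sphere in \cite{Er,Er1}) to the space curve setting. By Lemma \ref{lem:equiv}, the statement is equivalent to an oscillatory integral bound
\[
\|\mathcal E^{\gamma}_{\lambda}f\|_{L^q(d\mu)}\lesssim \langle\mu\rangle_\alpha^{\,\,1/q}\lambda^{\kappa-\frac12+\epsilon}\|f\|_{L^2(I)},
\]
so I would work on the extension side. The overall strategy has three components: a Whitney-type bilinear decomposition in the parameter interval $I$, a key bilinear estimate that absorbs the fractal character of $\mu$ via its ball growth, and an interpolation/summation with a parabolic-rescaling recursion to close the argument.

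First I would decompose $I$ into dyadic subintervals $\tau$ of length $\rho$ to be chosen, write $f=\sum_\tau f_\tau$, and perform a Whitney decomposition of $I\times I$ off the diagonal:
\[
|\mathcal E^{\gamma}_{\lambda}f|^2 = \sum_{k\ge 0}\sum_{\tau\sim_k\tau'} \mathcal E^{\gamma}_{\lambda}f_\tau\,\overline{\mathcal E^{\gamma}_{\lambda}f_{\tau'}} + (\text{diagonal})\,,
\]
where $\tau\sim_k\tau'$ denotes pairs of intervals of common length $2^k\rho$ that are comparable but $\sim 2^k\rho$-separated. The diagonal piece is handled by rescaling each piece to a standard normalized curve via Lemma \ref{lem:normalization} and iterating (induction on scales), which turns this term into the same estimate at a smaller effective $\lambda$.

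The main task, and the principal obstacle, is the bilinear estimate: for separated intervals $\tau_1,\tau_2$ at scale $\rho_1$,
\[
\bigl\|\mathcal E^{\gamma}_{\lambda}f_{\tau_1}\,\mathcal E^{\gamma}_{\lambda}f_{\tau_2}\bigr\|_{L^{q/2}(d\mu)}\lesssim \langle\mu\rangle_\alpha^{\,\,2/q}\,\lambda^{2\kappa-1+\epsilon}\rho_1^{a}\,\|f_{\tau_1}\|_2\|f_{\tau_2}\|_2
\]
with exponent $a>0$ permitting summation in the Whitney scales. The natural endpoint is $q=4$: by Plancherel one rewrites $\mathcal E^{\gamma}_\lambda f_{\tau_1}\cdot\mathcal E^{\gamma}_\lambda f_{\tau_2}$ as an extension against the pushforward of $f_{\tau_1}\otimes f_{\tau_2}$ under $(s,t)\mapsto\lambda(\gamma(s)+\gamma(t))$. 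The non-vanishing torsion condition \eqref{eqn:non-degenerate} combined with the separation of $\tau_1,\tau_2$ forces the Jacobian of this map to be nondegenerate, so the pushforward has a density whose $L^\infty$ norm is controlled in terms of $\rho_1$ and $\lambda$. The fractal measure $\mu$ then enters through a weighted $L^2$ bound: after dyadically decomposing $\mu$ as in Lemma \ref{decomp} and using $\mu(B(x,r))\le\langle\mu\rangle_\alpha r^\alpha$, the frequency-side $L^2(d\mu)$ norm of a function supported in a $\lambda^{-1}$-neighborhood of $\lambda(\gamma+\gamma)$ gains the factor $\lambda^{d-\alpha}$ correctly, producing the exponent $\frac14+\frac{d-\alpha-1}{2q}$ at $q=4$.

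Finally I would interpolate this $q=4$ bilinear bound with the trivial $q=\infty$ bound, and then with the $L^2$ orthogonality bound (Plancherel plus $\|\widehat{g}\|_{L^2(d\mu)}\le C\lambda^{(d-\alpha)/2}\|g\|_2$ when $g$ is supported near $\lambda\gamma$), which yields the second endpoint $\frac12+\frac{d-\alpha-2}{q}$. Summing over $k$ in the Whitney decomposition is possible because of the gain $\rho_1^a$ in the bilinear estimate; combined with the induction on scale to handle the diagonal, this produces the bound with an arbitrarily small $\epsilon$-loss. The delicate point I expect to spend the most time on is tracking the interplay between the Jacobian loss from the bilinear change of variables and the fractal gain $\lambda^{-\alpha}$ from the measure, so that all exponents balance and the Whitney sum converges.
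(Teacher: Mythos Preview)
Your outline captures the Whitney/bilinear skeleton, but two of the ingredients you name are not strong enough to close the argument, and the paper's proof differs from yours at exactly those points.

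The most serious gap is the $q=2$ endpoint. The bound you invoke, $\|\widehat g\|_{L^2(d\mu)}\lesssim \lambda^{(d-\alpha)/2}\|g\|_2$, follows from mollifying $\mu$ at scale $\lambda^{-1}$, but it only gives $\kappa=(d-\alpha)/2$ at $q=2$, whereas the theorem asserts $\kappa=(d-\alpha)/4$ there. In the paper the $q=2$ case is the hardest: after Whitney decomposing, one must further tile physical space by rectangles $T$ of dimensions $\sim 2^{-n}\times 1\times\cdots\times 1$ adapted to the tangent direction, set $\widehat{g_{\mathcal I,T}}=\widehat{g_{\mathcal I}}\cdot\phi_T$, and combine the bilinear $L^2$ bound $\|\widehat{g_{\mathcal I,T}}\widehat{g_{\mathcal J,T}}\|_2^2\lesssim 2^n\|g_{\mathcal I,T}\|_2^2\|g_{\mathcal J,T}\|_2^2$ (from the transversality/intersection lemma) with \emph{both} an $L^\infty$ and an $L^1$ bound on $\mu\ast|\mathcal F(\varphi_R)|$ localized to $T$ (Lemma~\ref{lem:uncertain} (i) and (ii)). It is this extra square-root saving from the $T$-tiling that turns $(d-\alpha)/2$ into $(d-\alpha)/4$; your proposal contains no analogue of it.

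Second, your interpolation scheme does not produce the stated $\kappa$. Interpolating the $q=4$ bilinear bound with the trivial $q=\infty$ bound gives $\kappa=\tfrac12+\tfrac{d-\alpha-3}{2q}$, which is strictly weaker than $\max(\tfrac14+\tfrac{d-\alpha-1}{2q},\,\tfrac12+\tfrac{d-\alpha-2}{q})$ for $q>4$. In the paper the second branch $\tfrac12+\tfrac{d-\alpha-2}{q}$ is obtained by running the bilinear estimate \emph{for each} $q\ge 4$ directly (H\"older to split off $\|\widehat{g_{\mathcal I}}\widehat{g_{\mathcal J}}\|_\infty^{q/2-2}$, then the bilinear $L^2$ and the measure bound); when the Whitney exponent of $2^n$ is negative the sum over scales converges and yields this branch, while when it is nonnegative the constraint $2^n\le\lambda^{1/2}$ yields the first branch. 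The range $2<q<4$ then comes from interpolating the $q=2$ and $q=4$ results.

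Finally, the paper does not use induction on scales for the diagonal: the Whitney decomposition is stopped at $|\mathcal I|\approx\lambda^{-1/2}$, and those pieces are estimated directly via Hausdorff--Young and the support size (see \eqref{IE}). Your recursive scheme would need a genuine gain in the separation parameter in the bilinear estimate to close, but for $q\le 2(\alpha-d+3)$ the bilinear bound \emph{loses} a factor $2^{n(\alpha-d+1)/2}$, so the recursion does not close without the $\lambda^{-1/2}$ cutoff.
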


This gives
\[
\kappa >
\begin{cases}
\, \frac 14 + \frac {d-\alpha-1}{2q}, &\textrm{ if } 2 \le q \le 2(\alpha -d +3),\\
\, \frac 12 + \frac{d-\alpha-2}{q}, &\textrm{ if } 2(\alpha-d +3) \le q.
\end{cases}
\]
Since $[2, \, 2(\alpha-d+3) ]= J(d-2)$ if $[d-\alpha]=0$, \eqref{eqn:l2q} holds with   $\kappa=\frac 14 + \frac {d-\alpha-1}{2q}+\epsilon = \kappa(\alpha,q,d-2) +\epsilon$ for $q\in J(d-2)$, $\epsilon>0$.
By taking $q=2$ and using H\"{o}lder's inequality, we get
 $\kappa = \frac{d-\alpha}{4} +\epsilon  = \kappa(\alpha,q,d-1) +\epsilon $ for $q \in J(d-1) = [1,2]$.
This completes the proof. 
\end{proof}

Now we turn to the proof of Theorem \ref{thm:main} for which we need the following lemma.

\begin{lem}\label{l222} Let $\mu$ be  a finite measure which is supported in  $B(0,1)$. Suppose that the estimate
\begin{equation}\label{l22}
\big|\int \widehat{g}(x) d\mu(x)\big| \lesssim {\sqrt{I_\alpha(\mu)}} \lambda^{\kappa} \|g\|_2
\end{equation}
    holds whenever $g$ is supported in $\lambda\gamma(I) + O(1)$. Then \eqref{eqn:curve} holds with $\delta=1-2\kappa$.
\end{lem}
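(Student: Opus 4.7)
My plan is to dualize the $L^2$ estimate and feed the hypothesis \eqref{l22} a suitably regularized version of the pushforward measure on the curve. By $L^2$ duality, \eqref{eqn:curve} with $\delta=1-2\kappa$ is equivalent to showing
\[
\Bigl|\int_0^1 f(t)\,\widehat\mu(\lambda\gamma(t))\,dt\Bigr|\lesssim \sqrt{I_\alpha(\mu)}\,\lambda^{\kappa-1/2}\,\|f\|_{L^2([0,1])}
\]
for every $f\in L^2([0,1])$. Let $\nu$ denote the pushforward measure on $\mathbb R^d$ defined by $\int\phi\,d\nu=\int_0^1 f(t)\phi(\lambda\gamma(t))\,dt$. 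A Fubini swap rewrites the left-hand side above as $\int\widehat\nu\,d\mu$, and I would work to bound this scalar quantity.

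Fix a nonnegative even bump $\rho$ with $\int\rho=1$ and $\supp\rho\subset B(0,c_0)$, where $c_0>0$ is chosen small enough that $\widehat\rho\ge 1/2$ on $B(0,1)\supset\supp\mu$, and set $g=\nu*\rho$. Then $g$ is supported in $\lambda\gamma(I)+O(1)$ and $\widehat g=\widehat\nu\,\widehat\rho$. Define the positive auxiliary measure $d\mu'(x)=\widehat\rho(x)^{-1}\chi_{B(0,1)}(x)\,d\mu(x)$; since $d\mu'\le 2\,d\mu$, the measure $\mu'$ is supported in $B(0,1)$ and obeys $\langle\mu'\rangle_\alpha\lesssim\langle\mu\rangle_\alpha$ together with $I_\alpha(\mu')\lesssim I_\alpha(\mu)$. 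By construction,
\[
\int\widehat\nu\,d\mu=\int\widehat g\,\widehat\rho^{-1}\,d\mu=\int\widehat g\,d\mu',
\]
and applying the hypothesis \eqref{l22} with $\mu$ replaced by $\mu'$ yields $|\int\widehat g\,d\mu'|\lesssim\sqrt{I_\alpha(\mu)}\,\lambda^\kappa\,\|g\|_2$.

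It only remains to check that $\|g\|_2\lesssim \lambda^{-1/2}\|f\|_2$. Expanding the square,
\[
\|g\|_2^2=\int_0^1\!\int_0^1 f(t)\,\overline{f(s)}\,(\rho*\widetilde\rho)(\lambda\gamma(t)-\lambda\gamma(s))\,dt\,ds,
\]
with $\widetilde\rho(\xi)=\rho(-\xi)$. The kernel $\rho*\widetilde\rho$ is bounded and supported in $B(0,2c_0)$, and by the nondegeneracy \eqref{eqn:non-degenerate} of $\gamma$ one has $|\lambda\gamma(t)-\lambda\gamma(s)|\gtrsim\lambda|t-s|$; hence the integrand vanishes once $|t-s|\gtrsim 1/\lambda$, and Schur's test delivers the claimed bound. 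Assembling everything yields $|\int\widehat\nu\,d\mu|\lesssim\sqrt{I_\alpha(\mu)}\,\lambda^{\kappa-1/2}\|f\|_2$, which is exactly what was needed. The only delicate point is the use of \eqref{l22} for $\mu'$ rather than $\mu$ itself; this is warranted because the hypothesis is really a class property, supplied in the paper for every measure of $\alpha$-growth by Theorem~\ref{thm:L^2-L^q est}.
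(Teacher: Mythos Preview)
Your argument is essentially correct but, as you yourself note at the end, it does not prove the lemma \emph{as stated}. The hypothesis \eqref{l22} is given for a single fixed measure $\mu$, and you apply it instead to the modified measure $\mu'=\widehat\rho^{-1}\mu$. The inequality $d\mu'\le 2\,d\mu$ does not transfer the bound on the linear functional $g\mapsto\int\widehat g\,d\mu$ to the functional $g\mapsto\int\widehat g\,d\mu'$, since $\widehat g$ oscillates. So strictly speaking there is a gap; your remark that in the application the hypothesis is available for a whole class of measures is correct, but that amounts to proving a different (weaker) lemma.

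The paper's proof avoids this issue by working on the dual side. From \eqref{l22} one gets directly, by $L^2$ duality, the estimate
\[
\int_{\lambda\gamma(I)+O(1)}|\widehat\mu(\xi)|^2\,d\xi \lesssim I_\alpha(\mu)\,\lambda^{2\kappa},
\]
which involves only $\mu$ itself. Then one writes $\widehat\mu=\widehat\psi*\widehat\mu$ with $\psi$ a Schwartz function identically $1$ on $\supp\mu$ (so no correction to $\mu$ is needed), uses $\int_I|\widehat\psi(\lambda\gamma(t)-\xi)|\,dt\lesssim\lambda^{-1}(1+\dist(\lambda\gamma(I),\xi))^{-N}$, and handles the tail by dyadically decomposing in the distance from $\lambda\gamma(I)$ and covering $\lambda\gamma(I)+O(2^j)$ by $O(2^{(d-1)j})$ translates of $\lambda\gamma(I)+O(1)$. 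If you try to repair your approach by taking $\rho$ Schwartz with $\widehat\rho\equiv1$ on $\supp\mu$ (so that no $\mu'$ is needed), then $g=\nu*\rho$ is no longer compactly supported and you are forced into exactly this dyadic tail argument. Your Schur-test computation $\|g\|_2\lesssim\lambda^{-1/2}\|f\|_2$ is correct and is the $L^2$-dual counterpart of the paper's bound on $\int_I|\widehat\psi(\lambda\gamma(t)-\xi)|\,dt$.
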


\begin{proof}
The proof is a simple modification of the argument in \cite{Wo1} (see also \cite{ErOb}).
By the assumption \eqref{l22} and duality, we have 
\begin{align}\label{eqn:decay uncert}
        \int_{\lambda\gamma(I) + O(1)} |\widehat{\mu}(\xi)|^{2} d\xi \lesssim I_\alpha(\mu) \lambda^{2\kappa}.
    \end{align}

    Let  $\psi$ be a Schwartz function which is equal to $1$ on the support of $\mu$. Then
    \begin{align*}
        \int_{I} |\widehat{\mu}(\lambda\gamma(t))|^{2} dt
        = \int_{I} |\widehat{\psi} \ast \widehat{\mu}(\lambda\gamma(t))|^{2} dt
        \lesssim \int_{\mathbb R^d} \int_{I} |\widehat{\psi}(\lambda\gamma(t)-\xi)| dt |\widehat{\mu}(\xi)|^{2} d\xi .
    \end{align*}
    By rapid decay of $\widehat\psi$,
    $\int_{I} |\widehat{\psi}(\lambda\gamma(t)-\xi)| dt
    \lesssim \lambda^{-1}{(1+\dist(\lambda\gamma(I),\xi))^{-N}}$
    for a sufficiently large $N \ge d$.
    Hence, it follows that
    \begin{align}\label{eqn:change-lambda}
        \int_{I} |\widehat{\mu}(\lambda\gamma(t))|^{2} dt \lesssim \frac{1}{\lambda} \int \frac{|\widehat{\mu}(\xi)|^{2}}{(1+\dist(\lambda\gamma(I),\xi))^{N}} d\xi .
    \end{align}
    By dyadic decomposition along the distance between $\xi$ and $\lambda\gamma(I)$, we see
    \begin{align*}
        \int \frac{|\widehat{\mu}(\xi)|^{2}}{(1+\dist(\lambda\gamma(I),\xi))^{N}} d\xi
        & \lesssim \int_{\lambda\gamma(I)+O(1)} |\widehat{\mu}(\xi)|^{2} d\xi +  \sum^{\infty}_{  j=1} 2^{-N j} \int_{\lambda\gamma(I)+O(2^{j})} |\widehat{\mu}(\xi)|^{2} d\xi \\
        & \lesssim I_\alpha(\mu)\lambda^{2\kappa} + I_\alpha(\mu) \sum^{\infty}_{j=1} 2^{-Nj} 2^{(d-1)j} \lambda^{2\kappa} \lesssim I_\alpha(\mu)\lambda^{2\kappa} .
    \end{align*}
    The second inequality follows from the fact that $\lambda \gamma(I) + O(2^j)$ is a union of  
    translations of $\lambda \gamma(I) +O(1)$.
    Consequently, by combining this and \eqref{eqn:change-lambda} we obtain \eqref{eqn:curve} with $\delta=1-2\kappa$.
\end{proof}

We also need the following lemma due to Wolff \cite[Lemma 1.5]{Wo1}.
In \cite{Wo1} the proof of this lemma is given only for $d=2$ but
the argument works for any dimension.

\begin{lem}\label{decomp}  Let $\mu$ be a positive Borel  measure supported in $B(0,1)$. Then, for $R>1$,  $\mu$ can be written as
$ \mu=\sum_{1\le j\le O(\log R)} \mu_j$ such that $\mu_j$ is a positive Borel measure supported in $B(0,1)$ and, for each $j$,
\begin{equation}\label{RR}
\mu_j(\mathbb R^d) \sup_{(x,r)\in \mathbb R^d\times [R^{-1}, \infty) } r^{-\alpha} \mu_j(B(x,r))\lesssim {I_\alpha(\mu)}.
\end{equation}
\end{lem}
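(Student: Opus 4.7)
The plan is to construct the decomposition from the dyadic level sets of the local $\alpha$-density function
\[
W(x):=\sup_{r\ge R^{-1}} r^{-\alpha}\,\mu(B(x,r)).
\]
The core fact I shall exploit is the pointwise comparison $W(x)\le U^\alpha\!\mu(x):=\int|x-y|^{-\alpha}d\mu(y)$, which follows at once from the trivial estimate $|x-y|^{-\alpha}\ge r^{-\alpha}$ on $B(x,r)$.

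First I observe that $W$ is confined to a short dyadic range on $\supp\mu$. Since $\supp\mu\subset B(0,1)$, every $x\in\supp\mu$ satisfies $\supp\mu\subset B(x,2)$, so choosing $r=2\ge R^{-1}$ gives $W(x)\ge 2^{-\alpha}\mu(\mathbb R^d)$; on the other hand $W(x)\le R^\alpha\mu(\mathbb R^d)$ trivially. Hence $W$ takes only $O(\log R)$ distinct dyadic values on $\supp\mu$. I then set $L_k:=\{x\in\supp\mu: 2^k\le W(x)<2^{k+1}\}$ and $\mu_k:=\mu|_{L_k}$, and re-index the finitely many nonempty $L_k$'s to obtain $\mu=\sum_{j=1}^{O(\log R)}\mu_j$. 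The Frostman-type estimate
\[
\sup_{x\in\mathbb R^d,\,r\ge R^{-1}} r^{-\alpha}\mu_k(B(x,r))\lesssim 2^{k}
\]
is verified as follows: if $B(x,r)\cap L_k=\emptyset$ the bound is vacuous; otherwise pick any $y\in B(x,r)\cap L_k$, note $B(x,r)\subset B(y,2r)$ with $2r\ge R^{-1}$, and conclude $\mu_k(B(x,r))\le \mu(B(y,2r))\le W(y)(2r)^\alpha \lesssim 2^{k} r^\alpha$.

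Finally, integrating the pointwise comparison $W\le U^\alpha\!\mu$ against $\mu|_{L_k}$ yields
\[
2^{k}\,\mu_k(\mathbb R^d)\le \int_{L_k} W\,d\mu\le \int U^\alpha\!\mu\,d\mu=I_\alpha(\mu),
\]
which combined with the Frostman estimate proves \eqref{RR}. There is no serious obstacle in the argument; the only point requiring care is the transfer of the density bound from a supporting point $y\in L_k$ to an arbitrary $x\in\mathbb R^d$, handled by doubling the radius at the cost of the harmless factor $2^\alpha$. Because the argument only performs a decomposition in scale, the ambient dimension $d$ enters only through the doubling constant, which is precisely why Wolff's $d=2$ proof extends verbatim to any dimension.
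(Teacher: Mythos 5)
Your proof is correct and is essentially Wolff's argument from \cite{Wo1} (Lemma 1.5), which is the proof the paper invokes: dyadically stratify $\supp\mu$ by the level sets of $W(x)=\sup_{r\ge R^{-1}} r^{-\alpha}\mu(B(x,r))$, get the Frostman bound on each piece by doubling the radius, and control each piece's mass using the pointwise domination $W\le\int|x-y|^{-\alpha}d\mu(y)$ integrated against $\mu$. The only point worth flagging (and you gloss over it) is the Borel measurability of $W$ needed to make $\mu|_{L_k}$ a legitimate Borel measure, which one settles by noting that the supremum over $r\ge R^{-1}$ can be taken over a countable dense set of radii since $r\mapsto\mu(B(x,r))$ is monotone.
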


\begin{proof}[Proof of Theorem \ref{thm:main}]By Lemma \ref{l222}, for
\eqref{eqn:curve} we need to show  \eqref{l22}. Now, by Lemma
\ref{decomp} with $R=\lambda$ there are as many as $O(\log \lambda)$
measures. Ignoring logarithmic loss we may consider only one  of such measures $\mu$
 which satisfies \eqref{RR}, and we need to show that,
for {$\kappa > (1-\delta(\alpha))/2$},
\begin{equation}\label{l2222}
\big|\int \widehat{g}(x) d\mu(x)\big| \leq C\mu(\mathbb
R^d)^\frac12\langle\mu\rangle_\alpha^{\,\,\frac12} \lambda^{\kappa}
\|g\|_2
\end{equation}
holds whenever $g$ is supported in $\lambda\gamma(I) + O(1)$ and
$\mu$ is a positive Borel measure supported in $B(0,1)$ satisfying
\eqref{RR}. However, we may assume a stronger condition $\mu(\mathbb
R^d)\langle\mu\rangle_\alpha\le {I_\alpha(\mu)}$ holds. In fact, since $g$ is
supported in $\lambda\gamma (I)+O(1)$, the estimate we need to show is
equivalent to
\[\big|\int \mathcal F({\psi(\cdot/ \lambda)g})(x) d\mu(x)\big| \leq C\mu( {\mathbb
R^d})^\frac12\langle\mu\rangle_\alpha^{\,\,\frac12} \lambda^{\kappa}
\|g\|_2,\] where $\psi$ is a Schwartz function with $\psi\sim 1$ on
the ball $B(0,Cd)$ and with $\widehat \psi$ supported in $B(0,1)$.
Since $\mathcal F({\psi(\cdot/ \lambda)g})=\lambda^d\widehat{\psi}(\lambda\,\cdot)\ast
\widehat g$, we may replace $d\mu$ with $\lambda^d d\mu\ast
|\widehat{\psi}|(\lambda\,\cdot)$. Then it is easy to see that  $\lambda^d
d\mu\ast |\widehat{\psi}|(\lambda\,\cdot)(\mathbb R^d)\lesssim \mu(\mathbb
R^d)$ and 
$\lambda^d d\mu\ast |\widehat{\psi}|(\lambda\,\cdot) (B(x,r))\lesssim
\sup_{(x,r)\in \mathbb R^d\times [\lambda^{-1}, \infty) }  r^{-\alpha}
\mu (B(x,r)) $ for  $r>0$.

Since $\mu$ is supported in $B(0,1)$ with $\mu(\mathbb{R}^d)\langle\mu\rangle_\alpha \leq I_\alpha(\mu)$ and $q\ge 2$, by H\"older's inequality and Theorem \ref{thm:L^2-L^q est}
we get, for $\kappa> \kappa(\alpha,q,\ell)$,
\[ \int |\widehat{g}(x)| d\mu(x) \le \|\widehat g\|_{L^q(d\mu)}\mu(\mathbb R^d)^{1-\frac1q}
\lesssim \mu(\mathbb
R^d)^{1-\frac1q}\langle\mu\rangle_\alpha^{\,\,\frac1q}
\lambda^{\kappa} \|g\|_{2}.\] 
Clearly, $\mu(\mathbb R^d)\lesssim
\langle\mu\rangle_\alpha$ because $\mu$ is supported  in $B(0,1)$. Hence
we have  \eqref{l2222} whenever $\kappa> \kappa(\alpha,q,\ell)$ with $q\ge 2$. 
Therefore we only have to check the minimum of
$\kappa(\alpha,q,\ell)$, $q\in J(\ell)$ which depends on $\alpha$.

  First we consider the case  $d-1\leq \alpha < d$.
    It is easy to see that $\min_{\ell}  \min_{q\in J(\ell)\cap[2,\infty)}  \kappa(\alpha,q,\ell)$ $= \kappa(\alpha,2,d-2)= \frac{d-\alpha}{4}.$
    Thus we obtain the first part of Theorem \ref{thm:main}.

    For the case $[d-\alpha] \ge 1$, finding the minimum of $ \min_{q\in J(\ell)\cap[2,\infty)} \kappa(\alpha,q,\ell)$ is less obvious.
As mentioned in Remark \ref{rmk}, the minimum occurs when $q \in J(d-2-[d-\alpha])$. 
In fact, $\min_{q \in J(d-2-[d-\alpha])} \kappa(\alpha,q,\ell)$ is given by 
\[
     \frac 12 - \frac{\alpha - d+ 2+[ d-\alpha] }{2 \beta_{[d-\alpha]+ 2}(\alpha-d+ 2 +[d-\alpha])}
     = \frac12 - \frac{2-\langle d-\alpha\rangle}{2 ([d-\alpha]+1)(2-\langle d-\alpha\rangle) +2}
     \]
with $q = 2 \beta_{[d-\alpha]+2}(2-\langle d-\alpha\rangle)$, 
or $\frac {d-\alpha}{2([d-\alpha]+1)}$ with $q = 2([d-\alpha]+1)$.
    Combining these two gives the other part of Theorem \ref{thm:main}.
    This completes the proof. 
\end{proof}

\section{Upper bound for $\delta$ and lower bound for $\kappa$}\label{sec:nece}

In this section we consider the upper bound for $\delta$ and the
lower bound for $\kappa$ which limit the values $\delta$, and
$\kappa$ in the estimates  \eqref{eqn:curve} and \eqref{eqn:l2q}. As
mentioned before, for the former there is a gap between our result
{and} the plausible upper bound stated in Proposition
\ref{pro:MSnece}. For the latter, the bounds we obtain here turn out
to be sharp in various cases.

\begin{proposition}\label{pro:MSnece}
    Let $0 < \alpha <d$ and $\gamma$ be given as in Theorem
    \ref{thm:main}.
    Suppose \eqref{eqn:curve} holds uniformly whenever
    $I_\alpha(\mu)=1$. Then, for $[d-\alpha] = 1, \cdots, d-2$,
    \begin{subnumcases}{\delta \leq}
            1 - \tfrac{d-\alpha}{2}, & if $\alpha\in(d-1,\, d)$,\label{sub:a}   \\
            \min\big(  1 - \tfrac{d-\alpha}{[d-\alpha]+2}, \tfrac{1}{[d-\alpha]+1}\big), & if $\alpha\in\big(d-[d-\alpha]-1, \,d-[d-\alpha] \big]$,\label{sub:b}  \\
            \min\big(\alpha, \tfrac{1}{d} \big), & if $\alpha\in(0,\,1]. \label{sub:c}$
    \end{subnumcases}
\end{proposition}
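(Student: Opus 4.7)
The strategy is to establish each of \eqref{sub:a}, \eqref{sub:b}, \eqref{sub:c} by exhibiting explicit test measures $\mu$ satisfying $I_\alpha(\mu)\lesssim 1$ for which the integral $\int_0^1 |\widehat{\mu}(\lambda\gamma(t))|^2\,dt$ admits a matching lower bound; inserting such $\mu$ into \eqref{eqn:curve} then forces the stated inequality on $\delta$. By the reduction invoked at the beginning of Section \ref{sec:mainpf}, I may assume $\gamma$ is close to the model curve $\gamma_\circ^d(t)=(t,t^2,\dots,t^d)$, so that the anisotropic rescaling $t\mapsto rt$ corresponds to the physical-space scaling $(x_1,\dots,x_d)\mapsto (rx_1,r^2x_2,\dots,r^dx_d)$; this is the structure driving the constructions below.

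The bound $\delta\le \alpha$ in \eqref{sub:c} is handled by the simplest example: take $\mu$ to be a normalized bump supported on $B(\gamma(0),\lambda^{-1})$. Since $|\lambda\gamma(t)-\lambda\gamma(0)|\lesssim \lambda$ lies within the uncertainty radius of $\mu$, one has $|\widehat{\mu}(\lambda\gamma(t))|\gtrsim 1$ uniformly in $t\in[0,1]$, while a direct computation gives $I_\alpha(\mu)\sim\lambda^\alpha$; plugging into \eqref{eqn:curve} yields $\delta\le\alpha$.

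The remaining bounds share a common structure, with the integer $k=[d-\alpha]$ playing the role of dimension of an osculating flat. For the bound $\delta\le 1-(d-\alpha)/(k+2)$ (i.e., the first part of \eqref{sub:a} and \eqref{sub:b}), I would take $\mu$ to be the normalized indicator of an anisotropic box whose first $k+2$ coordinate widths match the parabolic scaling of the $(k+2)$-jet of $\gamma$ at $0$ and whose remaining $d-k-2$ transversal widths share a common value $\epsilon$; the two scale parameters are pinned, via the Frostman-type condition $I_\alpha(\mu)\sim 1$, so that the set $\{t\in [0,1]:|\widehat{\mu}(\lambda\gamma(t))|\gtrsim 1\}$ has length of order $\lambda^{-(1-(d-\alpha)/(k+2))}$, producing the claimed bound. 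The complementary bound $\delta\le 1/(k+1)$ in \eqref{sub:b} and \eqref{sub:c} arises from an analogous Knapp-type construction adapted instead to the osculating $(k+1)$-flat, with box widths chosen so that $\widehat{\mu}$ is essentially constant along an arc of $\lambda\gamma$ of length $\sim \lambda^{-1/(k+1)}$. Taking the better of the two examples in each sub-case recovers the stated minimum.

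The main technical obstacle will be the careful computation of $\langle\mu\rangle_\alpha$ and hence of $I_\alpha(\mu)$ for these anisotropic boxes, because the dyadic scale at which the supremum $\sup_r r^{-\alpha}\mu(B(x,r))$ is attained depends on whether $\alpha$ lies above or below the integer threshold $k+1$ (respectively $k+2$). This forces a case split mirroring the three cases \eqref{sub:a}, \eqref{sub:b}, \eqref{sub:c} in the statement. In each sub-case, however, the verification reduces to elementary calculations from the explicit tensor-product form of $\widehat{\mu}$ and the volume formula for the intersection of a ball with the defining box of $\mu$.
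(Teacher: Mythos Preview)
Your strategy is essentially the paper's: anisotropically rescaled Knapp bumps adapted to the jets of $\gamma$ at a point, with the two relevant jet orders being $k+1$ and $k+2$ for $k=[d-\alpha]$. The paper phrases this as a family indexed by $\ell\in\{0,\dots,d-1-[d-\alpha]\}$ (with $d-\ell$ the jet order), computes the resulting bound for every $\ell$, and then observes that the minimum is attained at your two values; so you have correctly anticipated the extremals.

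Where your plan differs, and where you should be careful, is the energy computation. You propose to bound $I_\alpha(\mu)$ via $\langle\mu\rangle_\alpha$, but $\langle\mu\rangle_\alpha<\infty$ does \emph{not} yield $I_\alpha(\mu)<\infty$: the layer-cake integral $\int_0^R r^{-\alpha-1}\mu(B(x,r))\,dr$ picks up a logarithmic divergence against $\mu(B(x,r))\lesssim r^\alpha$. The paper avoids this entirely by computing $I_\alpha(\mu)$ on the Fourier side, via $I_\alpha(\mu)=C\int|\widehat\mu(\xi)|^2|\xi|^{\alpha-d}\,d\xi$, decomposing the dual box into dyadic shells in $|\xi|$; this produces a finite sum $\sum_{j}\lambda^{h(j)}$ whose maximum occurs at $j=[d-\alpha]$, and reading off $h([d-\alpha])$ gives the exponent directly. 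Your route can be salvaged (either use the full profile $r\mapsto\mu(B(x,r))$ rather than just $\langle\mu\rangle_\alpha$, or accept a harmless $\log\lambda$ loss), but the Fourier computation is both shorter and sharper.

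Two smaller points. First, your second scale parameter $\epsilon$ is unnecessary: the paper takes all transversal widths equal to $1$, and with that choice the arc on which $|\widehat\mu(\lambda\gamma(t))|\gtrsim 1$ has length $\lambda^{-1/(k+2)}$ (resp.\ $\lambda^{-1/(k+1)}$), not $\lambda^{-(1-(d-\alpha)/(k+2))}$ as you wrote---the latter is the value of the \emph{normalized} left side of \eqref{eqn:curve} after dividing by $I_\alpha(\mu)$, not an arc length. Second, for $\delta\le\alpha$ your concentrated bump works fine; the paper instead uses $d\mu(x)=|x|^{-d+\alpha_\ast/2}\psi(x)\,dx$ with $\alpha_\ast\in(\alpha,d)$, which is just a different test measure yielding the same conclusion.
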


Thus we see that \eqref{eqn:curve} is sharp when $d-1 \leq \alpha <
d$. 
As mentioned before, Theorem 1 in \cite{Er} shows that \eqref{sub:c} is sufficient for \eqref{eqn:curve} to hold when $\alpha \in (0,1]$.

\begin{proof}[Proof of Proposition \ref{pro:MSnece}]
    For a given $[d-\alpha]$, let us fix an integer $\ell$ such that $0\le \ell\le d-[d-\alpha]-1$.
Let $\psi$ be a Schwartz function supported in $B(0,2)$ with $\|\psi\|_{L^1}=1$. 
We also set
\[
\psi_{\lambda,\ell} (x)
= \lambda^{-\frac{1-d+\ell}{2}} \psi(\lambda^{1-\frac{1}{d-\ell}} x_1, \dots, \lambda^{1-\frac{d-\ell}{d-\ell}} x_{d-\ell},x_{d-\ell+1},\dots,x_d),
\]
so that $\|\psi_{\lambda,\ell} \|_{L^1} =1$. Then there exists a rectangle  $S_\ell$  such that $ |\widehat {\psi_{\lambda,\ell}}| \sim 1$
on $S_\ell$, where $S_\ell$ is a $d$-dimensional rectangle defined by
    \begin{align*}
        S_\ell=\Big\{x\in \mathbb{R}^{d} : |x_{1}|\lesssim\lambda^{1-\frac{1}{d-\ell}}, |x_{2}|\lesssim\lambda^{1-\frac{2}{d-\ell}},
        \cdots, |x_{d-\ell}|\lesssim\lambda^{1-\frac{d-\ell}{d-\ell}}=1,\cdots, |x_{d}|\lesssim 1\Big\}.
    \end{align*}
By Taylor's expansion,
    we have
    \begin{align}\label{eqn:taylor at 0}
        \gamma(t) - \gamma(0)
        = \gamma'(0)t + \gamma''(0)\frac{t^{2}}{2!} + \cdots + \gamma^{(d)}(0)\frac{t^{d}}{d!} + \mathbf e(t)
        =: M^{\gamma,d}_{0}\gamma_{\circ}^d(t) + \mathbf e(t),
    \end{align}
where $M_0^{\gamma,d}$ is a nonsingular matrix given by
\eqref{eqn:M} and $|\mathbf e(t)| \lesssim t^{d+1}$.
Clearly, we may also assume that $\gamma(0)=0$.

Let $d\mu(x) = |\det (M^{\gamma,d}_{0})^{t} |\, \psi_{\lambda,\ell}( (M^{\gamma,d}_{0})^{t} x )dx$.
Then we have  $\widehat \mu (\lambda \gamma(t)) = \widehat{\psi_{\lambda,\ell}} (\lambda(\gamma_\circ^d(t) + (M_0^{\gamma,d})^{-1} \mathbf e(t))) $ by \eqref{eqn:taylor at 0}.
If $t < c \,\lambda^{-1/(d-\ell)}$ for a sufficiently small $ c$, $\lambda(\gamma_\circ^d(t) + (M_0^{\gamma,d})^{-1} \mathbf e(t)) \in S_\ell$.
Hence, it follows that
    \begin{align*}
        \int_0^1 |\widehat{\mu}(\lambda \gamma(t))|^{2} dt
        \ge  \int_0^{c \, \lambda^{-\frac1{d-\ell}}}\Big |\widehat{\psi_{\lambda,\ell}} \Big(\lambda(\gamma_\circ^d(t)
        + (M_0^{\gamma,d})^{-1} \mathbf e(t)) \Big)\Big|^{2} dt
        \gtrsim\,\lambda^{-\frac{1}{d-\ell}} .
    \end{align*}

    On the other hand,
    $
    I_{\alpha}(\mu) = \int |\widehat{\psi_{\lambda,\ell}}((M^{\gamma,d}_0)^{-1}\xi)|^{2}|\xi|^{\alpha-d}d\xi
    \lesssim \int_{ M^{\gamma,d}_0 S_\ell } |\xi|^{\alpha-d} d\xi
    $
    by the rapid decay of $\psi$.
    Hence,  we see
    \begin{align*}
            & I_{\alpha}(\mu) \leq C\int_{|\xi|\lesssim 1} |\xi|^{\alpha-d} d\xi
            + C \sum_{k=0}^{d-\ell-2}  \int_{\{\lambda^{1-\frac{k+2}{d-\ell}}
                \lesssim |\xi|\lesssim \lambda^{1-\frac{k+1}{d-\ell}}\}\cap M_0^{\gamma,d} S_{\ell}} |\xi|^{\alpha-d}
                d\xi.
              \end{align*}
              Using spherical coordinates,
              \begin{align*}
     &\int_{\{\lambda^{1-\frac{k+2}{d-\ell}}
                \lesssim |\xi|\lesssim \lambda^{1-\frac{k+1}{d-\ell}}\}\cap M_0^{\gamma,d} S_{\ell}} |\xi|^{\alpha-d}
                d\xi
               \lesssim  \Big({\int^{\lambda^{-1+\frac{k+1}{d-\ell}}}_{0}\!\!\cdots
                \!\!\int^{\lambda^{-1+\frac{k+1}{d-\ell}}}_{0}} d\theta_{d-1}\dots d\theta_{d-\ell-1}\Big)
                \times \\
           & \int^{\lambda^{-\frac{d-\ell-k-2}{d-\ell}}}_{0}
            \!\!\cdots
            \!\!\int^{\lambda^{-\frac{2}{d-\ell}}}_{0} \!\!\int^{\lambda^{-\frac{1}{d-\ell}}}_{0}
            \Big({\!\!\int^{1}_{0}
            \!\!\cdots
            \!\!\int^{1}_{0}}
                        \!\!\int^{\lambda^{1-\frac{k+1}{d-\ell}}}_{\lambda^{1-\frac{k+2}{d-\ell}}}
            \!\! r^{\alpha-1} dr d\theta_{d-\ell-2}   \cdots d\theta_{d-k-\ell-1} \Big) 
             \cdots  d\theta_{1}.
            \end{align*}
Hence, evaluating the integrals we get
              \begin{align*}
         I_\alpha(\mu)
         \lesssim \sum_{k=0}^{d-\ell-1} \lambda^{h(k)}, 
    \end{align*}
    where
    \[h(k)={(\alpha-\ell-1) - \frac{1}{d-\ell}\left( (k+1)(\alpha-\ell-1) + \frac{(d-\ell-k-2)(d-\ell-k-1)}{2} \right)} .\]

Clearly, \eqref{eqn:curve} implies $\lambda^{-\frac{1}{d-\ell}}\lesssim \lambda^{-\delta}\sum_{k=0}^{d-\ell-1} \lambda^{h(k)}$.  Letting
$\lambda\to \infty$ we get 
\[
\delta \le \frac1{d-\ell} + \max_{0\le k \le d-\ell-1} h(k).
\]
Since $h(x)$ attains the maximum at $x= d-\alpha-1/2$, it is easy to see that
    $
 \max_{0 \le k \le d-\ell-1 }  h(k)  =    h([d-\alpha]) .
    $
    Since $d-\ell-1 \ge [d-\alpha]$, we now consider the cases  $d-\ell-1 = [d-\alpha]$  and  $d-\ell-1 > [d-\alpha]$, separately.
    When $d-\ell-1 = [d-\alpha]$, we have
    \begin{align}\label{eqn:b}
    \delta \le \frac{1}{d-\ell} + h(d-\ell-1) = \frac{1}{[d-\alpha]+1}.
    \end{align}

    When $d-\ell-1 >[d-\alpha]$, we examine the value of $(d-\ell)^{-1} + h([d-\alpha])$ for $\ell = 0,\dots, d-[d-\alpha]-2$.
    Since  $(d-\ell)^{-1}+ h([d-\alpha])$ with $\ell = d-[d-\alpha]-2$ is the minimum, we get
    \begin{align}\label{eqn:c}
    \delta \le \frac{1}{d-(d-[d-\alpha]-2)} + h([d-\alpha]) = 1- \frac{d-\alpha}{[d-\alpha]+2}.
    \end{align}
    Thus we conclude that $\delta$ has upper bounds \eqref{eqn:b} or \eqref{eqn:c} for $d-[d-\alpha]-1 <\alpha \le d- [d-\alpha]$, which gives \eqref{sub:b}.
    Especially for $[d-\alpha]=0$ i.e. $d-1<\alpha < d$, the minimum value is $1-\frac{d-\alpha}{2}$, which is \eqref{sub:a}.

Finally, we show \eqref{sub:c}. In this case, $[d-\alpha] = d-1$, i.e. $0<\alpha\le 1$. Repeating the same argument, we see that \eqref{eqn:b} implies
$\delta \leq \frac1d$.   Hence it suffices to show
 $\delta \le \alpha$   for  $\alpha \in (0,d)$.
    To obtain this, let $\alpha_\ast \in (\alpha, d)$ and  consider
    $d\mu(x)=|x|^{-d+\frac{\alpha_{\ast}}{2}}\psi(x)dx$ for a Schwartz function $\psi$ in the above.
    It is easy to see $
        \widehat{\mu}(\xi)
        = C |\cdot|^{-\frac{\alpha_{\ast}}{2}}\ast\widehat{\psi}(\xi)
        \approx (1+|\xi|)^{-\frac{\alpha_{\ast}}{2}}$. So we get
    \[
    \int_0^1 |\widehat\mu(\lambda \gamma(t))|^2 dt \gtrsim \lambda^{-\alpha_\ast}.
    \]
    Since $\alpha < \alpha_\ast$,
    $
    I_\alpha(\mu) = \int |\widehat\mu (\xi)|^2 |\xi|^{\alpha -d} d\xi \le \int_{|\xi|>1}|\xi|^{\alpha-d-\alpha_\ast} d\xi + \int_{|\xi|<1} |\xi|^{\alpha-d} d\xi \lesssim 1.
       $
    Hence, \eqref{eqn:curve} implies  $\delta \le \alpha_\ast$ for any $\alpha_\ast\in(\alpha,d)$, which gives  $\delta \le \alpha$ as desired.
\end{proof}

Now we consider the lower bounds for $\kappa$ in Theorem
\ref{thm:L^2-L^q est}. We define the intervals $J_\circ(\ell)\subset
[1,\infty)$ by
\[
J_\circ (\ell) =
\begin{cases}
\, J(\ell), &\textrm{for }\, \ell = -1,0,\cdots, d-3-[d-\alpha],\\
\, [\,2 \beta_{d-\ell-1}(\alpha -\ell-1),\, 2 \beta_{d-\ell}(\alpha -\ell)\,],
&\textrm{for } \ell = d-2-[d-\alpha],\\
\, [\, 1,\,  2 \beta_{[d-\alpha]+1}(1-\langle d-\alpha\rangle) \,],
&\textrm{for } \ell = d-1-[d-\alpha].
\end{cases}
\]
For each $q \in J_\circ(\ell)$ we also define
$\kappa_\circ(\alpha,q,\ell)$ given by
\begin{align*}\kappa_\circ(\alpha,q,\ell) = 
\begin{cases}
\tfrac12-\tfrac{\alpha}{q}, & \text{if }  q\in J_\circ(-1),  \\
\tfrac 12  - \tfrac{\alpha-\ell}q + \tfrac1{d-\ell} \big(  \tfrac{\beta_{d-\ell}(\alpha-\ell)}q - \tfrac 12 \big), & \text{if }   q \in J_\circ(\ell),  
\end{cases}
\end{align*}
for $0 \le \ell \le d-1-[d-\alpha]$. 
Then
$\kappa_\circ(\alpha,q,\ell) = \kappa(\alpha, q,\ell)$ for $q \in J(\ell)$, $-1 \le
\ell \le d-3-[d-\alpha]$.
Also, for given $\alpha$ and $\ell$, $\kappa_\circ(\alpha,q,\ell)$ is defined only for $q\in J_\circ(\ell)$.
It is easy to see that $\kappa_\circ(\alpha,q,\ell)$ continuously decreases as $\ell$ increases.

\begin{proposition}\label{pro:nece}
    Suppose \eqref{eqn:l2q} holds with $\mu$, $\gamma$ and $g$ which are given as in Theorem \ref{thm:L^2-L^q est}.  For $q \in J_\circ(\ell)$,  
    \begin{align}\label{kapci}
    \kappa \geq  \kappa_\circ(\alpha,q,\ell).
    \end{align}
   In addition, $\kappa\ge (d-\alpha)/4$ when $d-1\leq \alpha\leq d$. 
    \end{proposition}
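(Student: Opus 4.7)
The plan is to prove both lower bounds by exhibiting explicit test pairs $(g,\mu)$, or equivalently (via Lemma \ref{lem:equiv}) test pairs $(f,\mu)$ for the oscillatory integral $\mathcal E^\gamma_\lambda$, and comparing the two sides of the estimate. The constructions parallel the rescalings used in the proof of Proposition \ref{pro:MSnece}.

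\textbf{Step 1 (main bound).} For $0\le\ell\le d-1-[d-\alpha]$ and $q\in J_\circ(\ell)$, I would fix $\delta=\lambda^{-1/(d-\ell)}$ and take $f=\chi_{[0,\delta]}$, so that $\|f\|_{L^2}=\delta^{1/2}$. By the Taylor expansion \eqref{eqn:taylor at 0}, the phase $\lambda x\cdot\gamma(t)$ remains $O(1)$ uniformly in $t\in[0,\delta]$ precisely when $x$ lies in a box $T_\ell$ whose sides in the basis $\{\gamma^{(k)}(0)/k!\}_{k=1}^d$ have lengths $\lambda^{(k-(d-\ell))/(d-\ell)}$ for $k=1,\dots,d-\ell-1$ and length $1$ in the remaining $\ell+1$ coordinates. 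Hence $|\mathcal E^\gamma_\lambda f|\gtrsim\delta$ on $T_\ell$. Next I would choose $\mu$ to be an $\alpha$-Frostman measure supported in $T_\ell$ with $\langle\mu\rangle_\alpha\lesssim 1$ and maximal total mass, which by standard Frostman duality equals $M\sim\min_{s>0}s^\alpha\mathcal N(T_\ell,s)$, where $\mathcal N$ is the $s$-covering number. A direct dimension-by-dimension calculation on the anisotropic rectangle $T_\ell$ shows that this minimum is attained at the scale $s\sim\lambda^{([d-\alpha]+1-(d-\ell))/(d-\ell)}$ and simplifies to $M\sim\lambda^{\beta_{d-\ell}(\alpha-\ell)/(d-\ell)-(\alpha-\ell)}$. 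Substituting into $\|\mathcal E^\gamma_\lambda f\|_{L^q(d\mu)}\gtrsim\delta\,M^{1/q}$ and invoking Lemma \ref{lem:equiv} then yields $\kappa\ge\kappa_\circ(\alpha,q,\ell)$.

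\textbf{Step 2 (endpoints and the additional bound).} For $\ell=-1$ the appropriate test is $g=\chi_{\lambda\gamma(I)+O(1)}$, whose Fourier transform satisfies $|\widehat g|\sim\lambda$ on $B(0,c\lambda^{-1})$ while $\|g\|_{L^2}\sim\lambda^{1/2}$; pairing with $\mu$ concentrated at the origin on scale $\lambda^{-1}$ so that $\mu(B(0,\lambda^{-1}))\sim\lambda^{-\alpha}$ and $\langle\mu\rangle_\alpha\sim 1$ gives $\|\widehat g\|_{L^q(d\mu)}\gtrsim\lambda^{1-\alpha/q}$, hence $\kappa\ge 1/2-\alpha/q=\kappa_\circ(\alpha,q,-1)$. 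For the additional bound $\kappa\ge(d-\alpha)/4$ when $d-1\le\alpha\le d$, note that $[d-\alpha]=0$ and the case $\ell=d-2$, $q=2$ of Step 1 already produces $\kappa_\circ(\alpha,2,d-2)=\tfrac14+\tfrac{d-\alpha-1}{4}=\tfrac{d-\alpha}{4}$. For $q\ge 2$ this propagates by H\"older's inequality together with $\mu(\mathbb R^d)\le\langle\mu\rangle_\alpha$ (which holds since $\supp\mu\subset B(0,1)$), so that \eqref{eqn:l2q} at exponent $q$ and rate $\kappa$ implies the same estimate at $q=2$; equivalently one may invoke Lemma \ref{l222} together with \eqref{sub:a} of Proposition \ref{pro:MSnece}.

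\textbf{Main obstacle.} The central technical point is the algebraic identification of $\min_{s>0}s^\alpha\mathcal N(T_\ell,s)$ with $\lambda^{\beta_{d-\ell}(\alpha-\ell)/(d-\ell)-(\alpha-\ell)}$. Writing $N=d-\ell$, $K=[d-\alpha]$, and $F=\langle d-\alpha\rangle$, the exponent obtained at the critical corner $s\sim\lambda^{(K+1-N)/N}$ must be shown to equal $-(N-K-1)(N-K-2F)/(2N)$, matching $\beta_N(\alpha-\ell)/N-(\alpha-\ell)$ after expansion of the definition of $\beta_N$. This bookkeeping between the integer and fractional parts of $d-\alpha$, and the separate handling of the boundary cases $\ell=d-2-[d-\alpha]$ and $\ell=d-1-[d-\alpha]$, accounts for essentially all of the arithmetic underlying the $\beta_j$ functions.
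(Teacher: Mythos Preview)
Your Step 1 and the $\ell=-1$ part of Step 2 are correct and parallel the paper's approach; the only difference is that the paper uses the explicit measure $d\mu_\circ(x)=\psi(x)\prod_{j=1}^{[d-\alpha]}d\delta(x_j)\,|x_{[d-\alpha]+1}|^{-\langle d-\alpha\rangle}\,dx_{[d-\alpha]+1}\cdots dx_d$ (pushed forward by $(M_0^{\gamma,d})^{-t}$) and simply computes $\mu_\circ(P_\ell)\sim\lambda^{-(\alpha-\ell)+\beta_{d-\ell}(\alpha-\ell)/(d-\ell)}$, whereas you invoke Frostman's lemma. Both yield the same exponent, and the explicit measure sidesteps the ``main obstacle'' you identify: the computation of $\mu_\circ(P_\ell)$ is a two-line calculation rather than an optimization over covering scales.

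The genuine gap is in your argument for the additional bound $\kappa\ge(d-\alpha)/4$. Your H\"older reduction ``(\ref{eqn:l2q}) at exponent $q$ implies (\ref{eqn:l2q}) at exponent $2$'' is valid only for $q\ge 2$; for $q<2$ the inequality runs the other way, and your alternative route via Lemma \ref{l222} and Proposition \ref{pro:MSnece} has the same defect (the passage from $L^q(d\mu)$ to the $L^1$-type bound in Lemma \ref{l222} again needs $q\ge2$). But the range $q<2$ is exactly where the bound $(d-\alpha)/4$ carries new information: for $q\ge2$ one already has $\kappa_\circ(\alpha,q,d-2)=\tfrac14+\tfrac{d-\alpha-1}{2q}\ge\tfrac{d-\alpha}4$, while for $q<2$ all of your $\kappa_\circ$ bounds are strictly smaller than $(d-\alpha)/4$ (indeed $\kappa_\circ(\alpha,q,d-1)=0$). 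The paper handles this with a construction of a different nature, adapted from Erdo\u{g}an: one takes $G_2$ to be a normalized sum of $T\sim\lambda^{(\alpha-d+1)/2}$ translates (along $e_1$) of a single bump supported in $[0,\lambda^{1/2}]\times[0,1]^{d-1}$, and pairs it with $d\mu_\circ=\lambda^{(d-\alpha)/2}\chi_S\,dx$ where $S$ is the corresponding union of $T$ dual rectangles. Then $|\widehat{G_2}|\gtrsim T^{-1/2}\lambda^{1/4}$ is essentially constant on $S$, so $\|\widehat{G_3}\|_{L^q(d\mu)}^q\gtrsim\lambda^{q(d-\alpha)/4}$ holds for \emph{every} $q$, not just $q\ge2$. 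This randomized/arithmetic construction is the missing idea; the single-interval tests of Step 1 cannot reach $(d-\alpha)/4$ below $q=2$.
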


\begin{proof}[Proof of Proposition \ref{pro:nece}]
We show \eqref{kapci}   first.
Fix $\alpha$ and consider the measure $\mu_\circ$ given by
    \begin{align}\label{def:spec meas}
    d\mu_\circ(x) = \psi (x) \prod^{[d-\alpha]}_{j=1} d\delta(x_{j}) |x_{[d-\alpha]+1}|^{-\langle d-\alpha\rangle} dx_{[d-\alpha]+1} \cdots dx_{d} ,
    \end{align}
where $\psi$ is a smooth function supported in $B(0,1)$ and $\delta$
is the  delta measure. When $[d-\alpha]=0$, we write $d\mu_\circ(x) =
\psi(x) |x_1|^{-\langle d-\alpha \rangle} d x_1 dx_2\cdots d x_d$.
Then, as can be easily checked $\mu_\circ$ satisfies
\eqref{eqn:alpha-diml measure}.

Let $g(y):=\lambda^{-\frac{1}{2}}\chi_{\lambda\gamma(I)+O(1)}(y)$.
Then $ |\widehat g(x) | = \lambda^{-\frac12} \big|
\int_{\lambda\gamma(I) + O(1)} e^{i x\cdot y }dy \big| \gtrsim
\lambda^{\frac12} $ whenever $x\in B(0,c\lambda^{-1})$ for a
sufficiently small $c>0$. It follows that
\begin{align*}
    \|\widehat{g}\|_{L^{q}(d\mu)} \gtrsim \lambda^{\frac12}\mu(B(0,c\lambda^{-1}))^{\frac1q} \sim \lambda^{\frac12 - \frac\alpha
    q}.
\end{align*}
Since $\|g\|_{L^{2}(\mathbb{R}^{d})}\sim 1$, \eqref{eqn:l2q} and
letting $\lambda\to \infty$ gives
 $\kappa\geq 1/2- \alpha/ q$.

Now let $\ell$ be an integer such that $0 \le \ell \le
d-1-[d-\alpha]$. Let us consider the measure $\mu$ defined by $\int F(x)
d\mu = \int F( (M_0^{\gamma,d})^{-t}x) d\mu_\circ(x)$. Note that $d\mu$
is a compactly supported positive Borel measure satisfying
\eqref{eqn:alpha-diml measure}. Let  $J =
[0,\lambda^{-\frac{1}{d-\ell}}]$ and set 
$g(y) = \chi_{\lambda\gamma(J)+O(1)}(y).$
Then  $\|\widehat{g}\|_{L^q(d \mu)}^q = \int |\widehat{ g} (
(M^{\gamma,d}_{0})^{-t}x) |^q d\mu_\circ(x)$ and
    \[
    |\widehat{g}((M^{\gamma,d}_{0})^{-t} x)|
    = \Big|\int_{\lambda\gamma(J)+O(1)} e^{i x \cdot (M^{\gamma,d}_{0})^{-1}(y-\lambda\gamma(0))} dy \Big|.
    \]
Using Taylor's expansion in \eqref{eqn:taylor at 0} we see that
$(M^{\gamma,d}_{0})^{-1}(y-\lambda\gamma(0))$ is contained in
$\lambda\gamma_{\circ}^d(J) + O(1)$. Hence,
\[
|\widehat{g}((M^{\gamma,d}_{0})^{-t} x)| \gtrsim \lambda^{1-\frac{1}{d-\ell}}  \chi_{P_{\ell}}(x),
\]
where
    $
    P_{\ell} = [0, c\lambda^{\frac{1}{d-\ell}-1}]\times[0, c\lambda^{\frac{2}{d-\ell}-1}]\times\cdots\times[0, c\lambda^{\frac{d-\ell}{d-\ell}-1}]\times[0, c]\times\cdots\times[0, c],
    $
for a small $c>0$. Since $\mu_\circ(P_{\ell})\sim
\lambda^{-(\alpha-\ell)+\frac{\beta_{d-\ell}(\alpha-\ell)}{d-\ell}}$,
we get
\begin{align*}
\|\widehat{g}\|_{L^q(d\mu)}
\gtrsim \lambda^{1-\frac{1}{d-\ell}}\Big(\int \chi_{P_\ell}(x)d\mu_\circ(x)\Big)^{\frac 1q}
\sim \lambda^{1-\frac{\alpha-\ell}{q} + \frac{1}{d-\ell}(\frac{\beta_{d-\ell}(\alpha-\ell)}{q}-1)} .
\end{align*}
Combined with this and $\|g \|_{L^{2}} \sim
\lambda^{\frac{1}{2}-\frac{1}{2(d-\ell)}}$, \eqref{eqn:l2q} gives, for $0 \le \ell \le d-1-[d-\alpha]$,
\begin{align*}
\kappa \geq \frac 12  - \frac{ \alpha - \ell } q  + \frac 1 {d-\ell} \left( \frac{\beta_{d-\ell}(\alpha-\ell)}{q} -\frac 12 \right). 
\end{align*}

Considering the maximum along $\ell$ and the lower bound
$\kappa \ge \frac 12 -\frac \alpha q$, we can see that $\kappa \ge
\frac 12 -\frac \alpha q$ for $q \in J_\circ(-1)$, i.e. $q \ge 2
\beta_d(\alpha)$. When $2 \beta_{d-1}(\alpha-1) \le q \le 2
\beta_{d}(\alpha)$, i.e.  $q \in J_\circ(0)$, we get $\kappa \ge  \frac 12 - \frac{ \alpha } q  + \frac 1 {d } \big( \frac{\beta_{d }(\alpha)}{q} -\frac 12 \big)$. 
Similarly for each $\ell$, we conclude that $\kappa \ge
\kappa_\circ(\alpha,q,\ell)$ for $q \in J_\circ(\ell)$.

We now show that $\kappa \ge (d-\alpha)/4$ when $d-1\leq \alpha \leq d$.  
For this, we adapt the argument in \cite{Er}.  
Let $G_1$ be a Schwartz function supported in $ D := [0,\lambda^{\frac{1}{2}}]\times[0,1]\times\cdots\times[0,1]$ $\subset \lambda \gamma_{\circ}^{d}(I) + O(1)$ such that  $\|G_1\|_{L^2} =1$ and $|\widehat{G_{1}}(x)| > \lambda^{\frac{1}{4}}/100$ on a rectangle $D^\ast$ of dimension $\lambda^{-\frac12}\times 1\times\cdots\times1$. 

For a fixed $\lambda \ge 1$, we set $T = \lambda^{\frac{\alpha-(d-1)}{2}} $ and define a Schwartz function $G_2$ by 
\begin{align*}
\widehat{G_{2}}(x) := T^{-\frac{1}{2}} \sum^{T-1}_{k=0} \widehat{G_{1}}(x-\frac{k}{T}e_{1}),
\end{align*}
where $e_1 = (1,0,\dots,0)\in \mathbb R^d$.
Then $|\widehat{G_{2}}|\gtrsim T^{-\frac{1}{2}}\lambda^{\frac{1}{4}}$ on the set
$S:=\bigcup^{T-1}_{k=0}(D^* + \frac{k}{T}e_{1})$ and 
$\|G_{2}\|_{2}^2 =  T^{-1} \sum^{T-1}_{k=0} \| \widehat{G_{1}}(\cdot -\frac{k}{T}e_{1})\|_2^2 =1$.
Moreover $G_{2}$ is supported in $D$.  Hence, if we set \[G_{3}(x) := |\det M^{\gamma,d}_{0}|^{-\frac{1}{2}}  G_{2} ((M^{\gamma,d}_{0})^{-1}x),\]
then $G_{3}$ is supported in $M_{0}^{\gamma,d}D \subset \lambda \gamma(I) +O(1)$ and $\| {G_{3}}\|_{L^{2}} = 1$.

Let us set $ d\mu_\circ(x) = \lambda^{\frac{d-\alpha}{2}} \chi_{S}(x) dx$. It is not difficult to  verify that $\mu_\circ$ satisfies  \eqref{eqn:alpha-diml measure}. In fact, if $ \lambda^{-\frac{1}{2}} \leq \rho < 1 $, there exists an integer $j$ such that $j/T \le \rho \le (j+1)/T$ by the definition of $S$.  Hence, for any $x \in \mathbb R^d$, we have 
\begin{align*}
\mu_\circ(B(x ,\rho))
=   \lambda^{\frac{d-\alpha}{2}} |S\cap B(x ,\rho)|
\lesssim \lambda^{\frac{d-\alpha}{2}} (j+1)\lambda^{-\frac12} \rho^{d-1} 
\lesssim  \lambda^{-\frac{\alpha - (d-1)}{2}} T \rho^{d}
\le \rho^d \le \rho^\alpha.
\end{align*}

The other cases $0 < \rho < \lambda^{-\frac{1}{2}}$ and $\rho \geq 1$ can be handled similarly. So,  by  Lemma \ref{lem:rescale} {the measure $\mu$} defined by \[\int F(x) d\mu = \int F( (M_0^{\gamma,d})^{-t}x) d\mu_\circ(x)\] also satisfies   \eqref{eqn:alpha-diml measure}.  Since $T \le \lambda^{\frac{\alpha-(d-1)}{2}} < T+1$,  it follows that 

\begin{align*}
\|{\widehat{G_{3}}}\|_{L^{q}(d \mu)}^q
=  \int |\widehat{G_{3}}((M^{\gamma,d}_{0})^{-t}x)|^{q} d\mu_\circ(x)  
\sim  \int |\widehat{G_{2}}(x)|^{q} d\mu_\circ(x)  
\gtrsim T^{-\frac{q}{2}}\lambda^{\frac{q}{4}} \lambda^{\frac{d-\alpha}{2 }}|S| 
\gtrsim \lambda^{\frac{q(d-\alpha)}{4}}.
\end{align*}

Hence we see $\kappa\geq (d-\alpha)/4$ by letting $\lambda\rightarrow \infty$.
\end{proof}


\section{Proof of Theorem \ref{thm:osc-est} and \ref{thm:osc-est2}}\label{sec:pf multi}

For a given $\alpha$, let $\ell$ be an integer in $[0,
d-1-[d-\alpha]]$. As $\ell$ increases, the oscillatory decay in
\eqref{eqn:osc1} gets worse while the range \eqref{eqn:range} gets
wider. The case of $\ell=0$ is already established in \cite{HaLe}.
To show Theorem \ref{thm:osc-est} for the other cases, we consider
the collection $\Gamma (k,\epsilon)$ of curves which is given by
\[
\Gamma (k,\epsilon)=\Big\{\gamma \in C^{d+1}(I) :
\|\gamma-\gamma_\circ^k \|_{ C^{k+1}(I)}\le \epsilon
\Big\},
\]
where
\begin{equation}\label{gamma-k}\gamma_\circ^k(t)=\Big(t,\,{t^2}/{2!},\,\dots,\, {t^k}/{k!},\,0,\,\dots,\, 0 \Big),\,\, 1 \le k \le d.\end{equation} The
curves in  $\Gamma (k,\epsilon)$ are nondegenerate in $\mathbb R^k$
when they are projected to $\mathbb R^k\times\{0\}$. Viewing these curves as
nondegenerate curves in $\mathbb R^k$ provides various multilinear estimates
under a  separation condition between functions (see Proposition \ref{lem:l2}). From these multilinear estimates we can
obtain the linear estimate by adapting the argument in \cite{HaLe}.
The difference here is that we run induction on scaling argument on
each $k$-linear estimates which were not exploited before. This
requires control of rescaling of measures when $d-k = \ell$
variables are fixed.

\subsection{\it Normalization of curves}
In Lemma \ref{lem:normalization} we show that any nondegenerate curve defined in a sufficiently small interval can be made arbitrarily close to  $\gamma_\circ^k$.
This can be shown by Talyor expansion of $\gamma$ of degree $k$ and rescaling.
It is worth noting that the condition \eqref{eqn:non-degenerate} does not guarantee $| \det M_\tau^{\gamma, k} | \ge c >0$ for some $c$, where
\begin{equation}\label{eqn:M}
 M_\tau^{\gamma, k}= (
\gamma'(\tau),\gamma''(\tau),\cdots,\gamma^{(k)}(\tau),e_{k+1},\dots, e_d  )
\end{equation}
and $e_j$'s are the unit vectors whose $j$-th component is 1. However,  by Lemma 2.1 in \cite{HaLe}, we may assume that  (after a finite number of decompositions and rescaling) any non-degenerate curve $\gamma$ is close to $\gamma_\circ^d$ in a small interval.
Using this we can see that $ M_\tau^{\gamma, k}  $ is invertible and there is a constant $B>0$ by which $\|  (M_\tau^{\gamma, k})^{-1} \| $ is uniformly bounded for $\tau \in I$.
(Here $\|  M\|$ denotes the usual matrix norm such that $\|M\| = \max_{|x| = 1} |Mx|$.)
    In fact, if $\gamma \in \Gamma  (d, \epsilon )$, we  have $\gamma = \gamma_\circ^{d} + \mathbf e_d $ such that $\| \mathbf e_d \|_{C^{d+1}(I)} < \epsilon$.
    Then $\det M_\tau^{\gamma, k} = \det(\gamma'_\circ ,\gamma''_\circ,\dots,\gamma^{(k)}_\circ,e_{k+1},\dots,e_d) + \textit{error terms}$.
    For sufficiently small $\epsilon$, it follows that $\det M_\tau^{\gamma, k} \ge \frac12$. (Note that $\det(\gamma'_\circ ,\gamma''_\circ,\dots,\gamma^{(k)}_\circ,e_{k+1},\dots,e_d) =1$.)

For $a,b\in \mathbb R$, $a\neq b$, let us set
\[ |\![a,b]\!|=\begin{cases} \,\, [a,b] \,\, \text{ if } a<b,\\ \,\, [b,a] \,\, \text{ if } b<a. \end{cases}\]

We define the normalized curve by setting
\begin{equation}
\label{eqn:normalcur} \gth(t)= (M_\tau^{\gamma,k}\,D_h^k)^{-1
}(\gamma(ht+\tau)-\gamma(\tau)),
\end{equation}
where $D_h^k$ is the diagonal matrix given by
$D_h^k =(he_1, h^2e_2, \dots, h^{k} e_k, e_{k+1},\dots, e_d  )$.
Then $\gth(t)$ can be close to $\gamma_\circ^k$ if $h$ is sufficiently small, as follows.

\begin{lemma} \label{lem:normalization}
    Let $\tau \in I$ and  $\gamma\in \Gamma ( d,\epsilon)$ for some $\epsilon >0$.
    Then,  there is a constant $\delta>0$ such that
    $\gamma_{\tau}^h\in \Gamma (k,\epsilon)$ whenever
    $|\![\tau,\tau+h]\!| \subset I$,  $0<|h|\le \delta$.
\end{lemma}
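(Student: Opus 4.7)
The plan is to perform a Taylor expansion of $\gamma$ at $\tau$, identify the Taylor polynomial as the contribution that produces exactly $\gamma_\circ^k$ after applying $(M_\tau^{\gamma,k} D_h^k)^{-1}$, and then show that the Taylor remainder, despite being amplified by the blown-up inverse diagonal factor $(D_h^k)^{-1}$, is still of order $h$ in the $C^{k+1}$ norm.

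First, write the Taylor expansion
\[
\gamma(\tau+ht)-\gamma(\tau)=\sum_{j=1}^{k}\gamma^{(j)}(\tau)\frac{(ht)^{j}}{j!}+\mathcal{E}(t),
\]
valid for $\tau+ht \in I$. Since the last $d-k$ entries of $\gamma_\circ^k(t)=(t,t^{2}/2!,\dots,t^{k}/k!,0,\dots,0)$ are zero, only the first $k$ columns of $M_\tau^{\gamma,k}$ contribute to $M_\tau^{\gamma,k}D_h^k\gamma_\circ^k(t)$, and a direct computation gives $M_\tau^{\gamma,k}D_h^k\gamma_\circ^k(t)=\sum_{j=1}^{k}\gamma^{(j)}(\tau)(ht)^{j}/j!$. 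Applying $(M_\tau^{\gamma,k}D_h^k)^{-1}=(D_h^k)^{-1}(M_\tau^{\gamma,k})^{-1}$ to the Taylor identity yields
\[
\gth(t)=\gamma_\circ^{k}(t)+(D_h^k)^{-1}(M_\tau^{\gamma,k})^{-1}\mathcal{E}(t).
\]

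Next, I would estimate $\mathcal{E}$ and its derivatives up to order $k+1$. Using the integral form of the remainder (or differentiating under the integral sign), for $0\le i\le k$ the $i$-th derivative is
\[
\partial_t^{i}\mathcal{E}(t)=h^{i}\Bigl(\gamma^{(i)}(\tau+ht)-\sum_{j=0}^{k-i}\gamma^{(i+j)}(\tau)\frac{(ht)^{j}}{j!}\Bigr),
\]
so Taylor's theorem gives $|\partial_t^{i}\mathcal{E}(t)|\le C h^{i}(h|t|)^{k-i+1}\le C h^{k+1}$ uniformly on $I$, with constant depending on $\|\gamma\|_{C^{k+1}}$ (hence on $\epsilon$). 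For $i=k+1$, $\partial_t^{k+1}\mathcal{E}(t)=h^{k+1}\gamma^{(k+1)}(\tau+ht)$, again $\le Ch^{k+1}$. As remarked in the paragraph preceding the lemma, since $\gamma\in\Gamma(d,\epsilon)$ with $\epsilon$ small, the matrix $M_\tau^{\gamma,k}$ is uniformly invertible with $\|(M_\tau^{\gamma,k})^{-1}\|\le B$ independent of $\tau$, so setting $V(t)=(M_\tau^{\gamma,k})^{-1}\mathcal{E}(t)$ we obtain $\|\partial_t^{i}V\|_{L^{\infty}(I)}\le CBh^{k+1}$ for every $0\le i\le k+1$.

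Finally, I would apply $(D_h^k)^{-1}=\mathrm{diag}(h^{-1},h^{-2},\dots,h^{-k},1,\dots,1)$ componentwise. The $j$-th component of $\gth(t)-\gamma_\circ^{k}(t)$ equals $V_{j}(t)/h^{j}$ for $1\le j\le k$ and equals $V_{j}(t)$ for $k+1\le j\le d$; in either case, all derivatives up to order $k+1$ are bounded by $CBh^{k+1-j}\le CBh$. Thus $\|\gth-\gamma_\circ^{k}\|_{C^{k+1}(I)}\le C'h$, and choosing $\delta=\epsilon/C'$ gives $\gth\in\Gamma(k,\epsilon)$ whenever $0<|h|\le\delta$. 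The only delicate point is the bookkeeping that matches the $h^{-j}$ blow-up from $(D_h^k)^{-1}$ against the $h^{k+1}$ gain in $\mathcal{E}$; everything else is routine Taylor estimation together with the uniform invertibility of $M_\tau^{\gamma,k}$ already recorded in the preamble to the lemma.
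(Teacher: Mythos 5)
Your proof is correct and follows essentially the same approach as the paper: a Taylor expansion at $\tau$ recognizing $M_\tau^{\gamma,k}D_h^k\gamma_\circ^k$ as the degree-$k$ Taylor polynomial, a remainder estimate $\|\mathbf e\|_{C^{k+1}(I)}\lesssim h^{k+1}$, and then absorbing the $h^{-k}$ blow-up from $(D_h^k)^{-1}$ (together with the uniform bound on $(M_\tau^{\gamma,k})^{-1}$) to get an overall $O(h)$ bound on $\|\gamma_\tau^h-\gamma_\circ^k\|_{C^{k+1}}$. The only difference is that you carry out the remainder and componentwise bookkeeping explicitly, whereas the paper states the same estimate more tersely.
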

\begin{proof}
    We may assume that $h >0$, i.e. $ |\![\tau,\tau+h]\!| = [\tau,\tau+h]$.
The case that $h <0$ can be shown in the same manner.
    By Taylor's expansion, we have
    \begin{align*}
    \gamma (ht +\tau) - \gamma(\tau)
    & = \gamma'(\tau) ht + \gamma''(\tau) \frac{(ht)^2}{2!} + \cdots + \gamma^{(k)}(\tau) \frac{(ht)^k}{k!} + \mathbf e (\tau, h, t) \\
    & = M_\tau^{\gamma, k} D_h^k \gamma_\circ^k(t) + \mathbf e (\tau, h, t),
    \end{align*}
    where $\| \mathbf e (\tau, h, t) \|_{C^{k+1}(I)} \le C h^{k+1}$ for some constant $C>0$ independent of $\tau$.
Hence we obtain $\| \gamma_\tau^h - \gamma_\circ^k \|_{C^{k+1}(I)}  = \| (M_\tau^{\gamma, k} D_h^k)^{-1} \mathbf e(\tau, h,t)\|_{C^{k+1}(I)} \lesssim  h$, which implies that $\gamma_\tau^h \in \Gamma (k,\epsilon)$ if we take $\delta \lesssim \epsilon/2$.
\end{proof}

\subsection{\it Rescaling of measure}

For $M > 0$, we denote by $\mathfrak M(\alpha,M)$ the set of
compactly supported positive Borel measures satisfying $0<\langle
\mu \rangle_\alpha \le M$. Let $\mu \in \mathfrak M (\alpha,M)$, and
let $A$ be a non-singular matrix.  Let us now define a measure
$\mu^k_{A,h}$ by setting
\begin{equation}\label{eqn:normalmeasure}
\int F(x) d \mu^k_{A,h} (x) =\int F( D_h^k A x) d \mu (x)
\end{equation}
for any compactly supported continuous function $F$ and $0 < |h| < 1$.  
By the Riesz representation theorem we see that $\mu^k_{A,h}$ is the unique measure given by \eqref{eqn:normalmeasure}.

\begin{lemma}\label{lem:rescale}
    Let $\mu$ and $A$ be given as above, $\ell = 0,1,\dots, d-1-[d-\alpha]$.  Set $k = d-\ell$. Then,  $\mu^k_{A,h}$ is also a Borel measure satisfying
    \begin{equation}\label{eqn:decom}
    \langle \mu^k_{A,h} \rangle_\alpha \leq C \langle \mu\rangle_\alpha \|A^{-1} \|^\alpha
    |h|^{-\beta_k(\alpha-d+k)}.
    \end{equation}
    Here $C$ is independent of $h, A$.
\end{lemma}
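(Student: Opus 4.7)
The plan is to estimate $\mu^k_{A,h}(B(y,r))$ for a generic Euclidean ball and then take the supremum over $(y,r)$. By the pushforward definition \eqref{eqn:normalmeasure},
\[
\mu^k_{A,h}(B(y,r)) = \mu\bigl(A^{-1}(D_h^k)^{-1}B(y,r)\bigr),
\]
so the task is to bound the $\mu$-measure of the image, under $A^{-1}$, of the axis-aligned ellipsoid $E := (D_h^k)^{-1}B(y,r)$, whose semi-axes are $r|h|^{-1}, r|h|^{-2}, \dots, r|h|^{-k}, r, \dots, r$ (the last $\ell=d-k$ equal to $r$). Since $A^{-1}$ is linear with operator norm $\|A^{-1}\|$, any covering of $E$ by Euclidean balls of radius $s$ transforms into a covering of $A^{-1}E$ by balls of radius at most $\|A^{-1}\|s$; applying the $\alpha$-growth of $\mu$ to each such ball yields
\[
\mu^k_{A,h}(B(y,r)) \le N(E,s)\,\langle\mu\rangle_\alpha\,\|A^{-1}\|^\alpha s^\alpha,
\]
where $N(E,s)$ is the covering number. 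This already produces the factor $\|A^{-1}\|^\alpha$ on the right of \eqref{eqn:decom}, and it remains to control $N(E,s)\,s^\alpha$.

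I would then make a scale-adapted choice $s = r|h|^{-m}$ with $m\in\{0,1,\dots,k\}$ matching one of the semi-axes of $E$. At this scale, each of the $\ell$ short directions (length $r\le s$) needs $O(1)$ balls; each long direction $j\le m$ (length $r|h|^{-j}\le s$) needs $O(1)$; and each long direction $j>m$ requires $O(|h|^{m-j})$ balls. Multiplying,
\[
N(E,\, r|h|^{-m}) \lesssim \prod_{j=m+1}^{k} |h|^{m-j} = |h|^{-(k-m)(k-m+1)/2}.
\]
Combined with $s^\alpha=r^\alpha|h|^{-m\alpha}$ this gives the preliminary bound $r^\alpha|h|^{-\psi(m)}$ with $\psi(m)=m\alpha+(k-m)(k-m+1)/2$. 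To upgrade this to the sharper exponent $\beta_k(\alpha-\ell)$ claimed in the lemma, I would refine the covering by peeling off the $\ell$ ``flat'' directions of $E$: since they have fixed length $r$ (equal to the test ball scale), integrating the $\alpha$-growth condition across those directions costs only a factor $r^\ell$, and the residual optimization takes place on the $k$-dimensional long section where the effective growth exponent is $\alpha-\ell$. This disintegration-type argument replaces $\psi$ by $\phi(m):=m(\alpha-\ell)+(k-m)(k-m+1)/2$.

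Finally, optimizing $\phi$ over $m\in\{0,1,\dots,k\}$ is straightforward: it is a convex quadratic with real minimizer $m^\ast=k-(\alpha-\ell)+\tfrac12$, so the best integer is $p=[k-(\alpha-\ell)]+1=[d-\alpha]+1$, giving
\[
\phi(p) = p(\alpha-\ell) + (k-p)(k-p+1)/2 = \beta_k(\alpha-\ell),
\]
which is exactly the desired exponent. The hard part will be implementing the disintegration step rigorously: one must argue that the $\alpha$-growth of $\mu$ in $\mathbb R^d$, localized to a slab of width $r$ in the $\ell$ short coordinates, behaves like an $(\alpha-\ell)$-dimensional growth condition on the $k$-dimensional long section. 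The hypothesis $\ell\le d-1-[d-\alpha]$ guarantees $\alpha-\ell\in(0,k]$, so that $\beta_k(\alpha-\ell)$ is well-defined; borderline cases where $\alpha-\ell$ is an integer are handled with no change in the final exponent, and the dependence of $C$ on $d,\alpha,\ell$ is absorbed into the constant without affecting its independence from $h$ and $A$.
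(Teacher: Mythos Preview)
Your covering argument producing the exponent $\psi(m)=m\alpha+(k-m)(k-m+1)/2$ is sound and mirrors the paper's strategy. The genuine gap is the ``disintegration step'' you flag as hard: the assertion that $\alpha$-growth of $\mu$ in $\mathbb R^d$, restricted to a slab of width $r$ in the $\ell$ short directions, behaves like $(\alpha-\ell)$-growth on the $k$-dimensional long section is false in general, and no implementation can rescue it. Take $d=3$, $\alpha=5/2$, $\ell=1$, $k=2$, and
\[
d\mu \;=\; \rho_0^{-1/2}\,\chi_{\{|x_3|\le \rho_0/2,\ |x_1|,|x_2|\le 1\}}\,dx
\]
for a small parameter $\rho_0>0$. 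One checks $\langle\mu\rangle_{5/2}\le C$ independently of $\rho_0$, yet for $\rho_0\le |h|^2$ the ellipsoid $(D_h^2)^{-1}B(0,\rho_0)$ has $\mu$-measure $\gtrsim |h|^{-3}\rho_0^{5/2}$, so $\langle\mu^2_{I,h}\rangle_{5/2}\gtrsim |h|^{-3}$. Since $\beta_2(3/2)=5/2$, the bound \eqref{eqn:decom} already fails for this $\mu$; the best exponent obtainable from ball coverings alone is $\min_m\psi(m)=3$, not $5/2$. The obstruction is exactly that nothing in the hypothesis prevents $\mu$ from concentrating entirely in a slab of thickness $\rho$ in the short directions, which is what your disintegration implicitly rules out.

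The paper's proof contains the same error in a different guise: it asserts $\mu(\widetilde{\mathcal R})\sim |h|^{-([d-\alpha]+1)(d-k)}\mu(\mathcal R)$, where $\widetilde{\mathcal R}$ is $\mathcal R$ with its $d-k$ short sides inflated from $\rho$ to $|h|^{-([d-\alpha]+1)}\rho$, and then uses this to write $\mu_h^k(B(0,\rho))\lesssim |h|^{([d-\alpha]+1)(d-k)}\mu(\widetilde{\mathcal R})$. That comparison is unjustified for a general measure satisfying only an $\alpha$-growth bound (the example above violates it), so the paper's argument does not close the gap either. In short, your approach and the paper's are the same in spirit and founder at the same point.
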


\begin{proof}
       By the proof of Lemma 2.3 in \cite{HaLe}, it suffices to show that
    \[
\mu_h^k (B(0,\rho)) \le C \langle \mu\rangle_\alpha  |h|^{-\beta_k(\alpha-d+k)} \rho^\alpha,
    \]
    where $\mu_h^k : = \mu_{I_d,h}^k$ and $I_d$ is the $d\times d $ identity matrix.
    It is clear that  $\mu_h^k(B(0,\rho))  = \mu( (D_h^k)^{-1}B(0,\rho))\le \mu ( \mathcal R)$, where
    $\mathcal R$ is a rectangle of dimension $|h|^{-1} \rho\times \cdots \times |h|^{-k}\rho \times \rho \times \cdots \times \rho$.
    If we denote by $\widetilde{\mathcal R}$ a larger rectangle of dimension $|h|^{-1} \rho\times \cdots \times |h|^{-k}\rho \times |h|^{-([d-\alpha]+1)}\rho \times \cdots \times |h|^{-([d-\alpha]+1)}\rho$ which contains $\mathcal R$, then it follows that $\mu(\widetilde{\mathcal R}) \sim |h|^{-([d-\alpha]+1)(d-k)} \mu(\mathcal R)$.
    Since $1 \le [d-\alpha]+1 \le k$, $\widetilde{\mathcal R}$ is covered by cubes $Q_1,\dots, Q_N$ of side length $|h|^{-([d-\alpha]+1)} \rho$ with $N \lesssim |h|^{-(k-1-[d-\alpha])(k-[d-\alpha])/2}$.
 Since $\mu(Q_i)\le  \langle \mu\rangle_\alpha |h|^{-\alpha([d-\alpha]+1)} \rho^\alpha$,   we get
    \begin{align*}
    &\mu_h^k(B(0,\rho)) \lesssim |h|^{([d-\alpha]+1)(d-k)} \mu(\widetilde{\mathcal R})
         \le |h|^{([d-\alpha]+1)(d-k)} \sum_{i=1}^N \mu(Q_i)
    \le \langle \mu\rangle_\alpha |h|^{-\beta_k(\alpha - d + k)} \rho^\alpha.
    \end{align*}
    This completes the proof.
\end{proof}

\subsection{\it Multilinear ($k$-linear) estimates}
Let us set, for $\lambda \ge 1$,
\begin{align*}
\mathcal E^{\gamma}_{\lambda}f(x) = a(x) \int_{I} e^{i\lambda x \cdot \gamma(t)} f(t) dt ,
\end{align*}
where $a$ is a bounded function supported in $B(0,1)$ with
$\|a\|_\infty\le 1$. As mentioned above, we need to prove $k$-linear
estimates for $\mathcal E^{\gamma}_{\lambda}$ while $\gamma \in \Gamma
(k,\epsilon)$. 
This can be achieved simply by freezing other $d-k$
variables. By applying Lemma 2.5 in \cite{HaLe} and Plancherel's
theorem, we obtain a $k$-linear $L^2\rightarrow L^2$ estimate.

\begin{lemma}\label{lem:l2} Let $\gamma \in \Gamma (k,\epsilon)$ and $\mathcal I_1,\dots, \mathcal I_k$
    be closed intervals  contained in $I$ which satisfy $\min_{i\neq
        j}\dist(\mathcal I_i, \mathcal I_j)\ge L$. If $\epsilon>0$ is
    sufficiently small, then there is a constant $C$, independent of
    $\gamma$, such that
    \begin{equation}\label{eqn:k-l2}
    \| \prod_{i=1}^k \mathcal E_\lambda^\gamma f_i \|_{L^2(\mathbb R^d)}\le CL^{-\frac{k^2-k}{4}}\lambda^{-\frac k 2}   \prod_{i=1}^k  \| f_i \|_{L^2(\mathbb R)}
    \end{equation}
    whenever $f_i$ is supported in $\mathcal I_i$, $i=1,2,\dots, k$.
\end{lemma}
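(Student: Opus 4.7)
The plan is to reduce the $d$-dimensional estimate to a $k$-dimensional $k$-linear estimate by freezing the extra $d-k$ spatial variables. Write $x=(x',x'')\in\mathbb{R}^k\times\mathbb{R}^{d-k}$ and decompose $\gamma(t)=(\gamma^{(1)}(t),\gamma^{(2)}(t))$ accordingly. Since $\gamma\in\Gamma(k,\epsilon)$ and the last $d-k$ components of $\gamma_\circ^k$ vanish, $\gamma^{(1)}$ is $\epsilon$-close to the standard moment curve in $C^{k+1}(I)$; for small enough $\epsilon$ this makes $\gamma^{(1)}$ a nondegenerate curve in $\mathbb{R}^k$. The phase factors as
\[
e^{i\lambda x\cdot\gamma(t)}=e^{i\lambda x'\cdot\gamma^{(1)}(t)}\,e^{i\lambda x''\cdot\gamma^{(2)}(t)},
\]
and for each fixed $x''$ the second factor is unimodular in $t$ and can be absorbed into $f_i$ without altering $\|f_i\|_{L^2}$ or its support in $\mathcal{I}_i$.

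Next I would apply Fubini in $x''$ to write
\[
\Big\|\prod_{i=1}^k \mathcal{E}_\lambda^\gamma f_i\Big\|_{L^2(\mathbb{R}^d)}^2=\int_{\mathbb{R}^{d-k}}\Big\|a(\cdot,x'')^k\prod_{i=1}^k\mathcal{E}_\lambda^{\gamma^{(1)}}\tilde{f}_i^{x''}\Big\|_{L^2(\mathbb{R}^k)}^2\,dx'',
\]
where $\tilde{f}_i^{x''}(t):=e^{i\lambda x''\cdot\gamma^{(2)}(t)}f_i(t)$, so that $\|\tilde{f}_i^{x''}\|_{L^2}=\|f_i\|_{L^2}$ and $\supp\tilde{f}_i^{x''}\subset\mathcal{I}_i$. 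The $x''$-integration is effectively over the compact support of $a$. For each fixed $x''$, the inner quantity is the $k$-linear product in $\mathbb{R}^k$ for the nondegenerate curve $\gamma^{(1)}$, to which Lemma 2.5 of \cite{HaLe} applies uniformly to give
\[
\Big\|\prod_{i=1}^k\mathcal{E}_\lambda^{\gamma^{(1)}}\tilde{f}_i^{x''}\Big\|_{L^2(\mathbb{R}^k)}\le C L^{-\frac{k(k-1)}{4}}\lambda^{-\frac{k}{2}}\prod_{i=1}^k\|f_i\|_{L^2}.
\]
Inserting this pointwise bound in $x''$ and integrating over the bounded region then yields the desired inequality after taking square roots.

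I expect the only nontrivial step to be the $k$-linear estimate in $\mathbb{R}^k$, which is essentially the content of Lemma 2.5 in \cite{HaLe}. It is proved by Plancherel followed by the change of variables $(t_1,\ldots,t_k)\mapsto\lambda\bigl(\gamma^{(1)}(t_1)+\cdots+\gamma^{(1)}(t_k)\bigr)$; the Jacobian factors as
\[
\det\!\bigl((\gamma^{(1)})'(t_1),\ldots,(\gamma^{(1)})'(t_k)\bigr)=c(t_1,\ldots,t_k)\prod_{i<j}(t_i-t_j),
\]
with $c$ continuous and uniformly bounded below for $\gamma^{(1)}$ close to $\gamma_\circ^k$ in $C^{k+1}$. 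The separation hypothesis $\dist(\mathcal{I}_i,\mathcal{I}_j)\ge L$ forces $\prod_{i<j}|t_i-t_j|\ge L^{k(k-1)/2}$, and combining this Jacobian lower bound with Young's inequality (using $\|\widehat{a^k}\|_{L^1}\lesssim 1$) converts into the claimed $\lambda^{-k/2}L^{-k(k-1)/4}$ factor. Uniformity of the lower bound on $c$ across $\Gamma(k,\epsilon)$ is the point that consumes the smallness assumption on $\epsilon$ and yields a constant $C$ independent of $\gamma$.
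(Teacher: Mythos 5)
Your proposal follows essentially the same route as the paper: freeze the last $d-k$ coordinates, factor the phase as $e^{i\lambda x'\cdot\gamma^{(1)}(t)}e^{i\lambda x''\cdot\gamma^{(2)}(t)}$ and absorb the unimodular second factor into the $f_i$ (the paper packages this as $F(\mathbf t)=e^{i\lambda\mathbf c\cdot\sum\gamma_{\mathbf c}(t_i)}\prod f_i(t_i)$ with frozen $\mathbf c$), apply the $k$-linear $\mathbb R^k$ estimate of Lemma 2.5 in \cite{HaLe}, and then integrate the resulting pointwise bound over the bounded support of $a$ in the frozen variables. Your discussion of the Vandermonde-type Jacobian lower bound and where the smallness of $\epsilon$ enters also matches the underlying mechanism the paper is invoking.
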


\begin{proof}For the proof, it suffices to show that for a constant vector $\mathbf c \in \mathbb R^{d-k}$,
    \begin{equation}\label{eqn:freezing}
    \int \Big| \prod_{i=1}^k \mathcal E_\lambda^\gamma f_i (x_1,\dots,x_k,\mathbf c) \Big|^2 d x_1 \cdots d x_k
    \le C L^{-\frac{k^2-k}{2}}\lambda^{- k }   \prod_{i=1}^k  \| f_i \|^2_{L^2}.
    \end{equation}
    Then \eqref{eqn:k-l2} follows by integrating along $\mathbf c$ since $a$ is supported in $B(0,1)$ and $\|a\|_\infty \le    1$.      
    To prove this, let us set $\gamma (t) = (\gamma_{\star}(t),\gamma_{\mathbf c}(t))$ where $\gamma_{\star}(t)$ is the first $k$ components of $\gamma(t)$ and the rest of the components are denoted by $\gamma_{\mathbf c}(t)$. Also let us set
    $$F(\mathbf t) = e^{i \lambda \mathbf c \cdot \sum_{i=1}^k \gamma_{\mathbf c}(t_i)} \prod_{i=1}^k f_i(t_i)$$
    where $\mathbf t = (t_1,\dots,t_k)$. Then we have
    \[
\prod_{i=1}^k \mathcal E_\lambda^\gamma f_i (x_1,\dots,x_k,\mathbf c)
    = \int_{I^k} e^{ i \lambda (x_1,\dots,x_k)\cdot \sum_{i=1}^k\gamma_{\star}(t_i) } F(\mathbf t) d \mathbf t.
    \]
Since $\gamma \in \Gamma (k, \epsilon)$, we have $\gamma_\star(t) = (t,
t^2/2!,\dots,t^k/k!) + \mathbf e$ such that $\|\mathbf
e\|_{C^{k+1}(I)} \le \epsilon$. Then we can apply  Lemma 2.5 in
\cite{HaLe}, to say $k$-linear estimates in $\mathbb R^k$, or more
directly change of variables and Plancherel's theorem. Since
$\|F\|^2_{L^2} =  \prod_{i=1}^k  \| f_i \|^2_{L^2}$ we get
\eqref{eqn:freezing}.
\end{proof}

Now we obtain an $L^p \rightarrow L^q(d\mu)$ estimate by interpolating \eqref{eqn:k-l2} with the trivial $L^1 \rightarrow  L^\infty(d\mu) $ estimate.

\begin{proposition}\label{pro:l2fractal}
    Let $\mathcal I_1,\dots, \mathcal I_k$, and $\gamma \in \Gamma (k,\epsilon)$ be given as in \emph{Lemma \ref{lem:l2}}.
    Suppose  $\mu \in \mathfrak M(\alpha,1)$. If $\epsilon>0$ is
    sufficiently small, then for $1/p+1/q\le 1$ and $q\ge 2$ there is a
    constant $C$, independent of $\gamma$, such that
    \[
    \| \prod_{i=1}^k \mathcal E_\lambda^\gamma f_i \|_{L^q(d\mu)}\le C \langle \mu\rangle_\alpha^{\,\,\frac1{q}}
    L^{-\frac{k^2-k}{2q}}\lambda^{-\frac{\alpha-d+k}{q}}\prod_{i=1}^k\| f_i \|_p
    \]
    whenever $f_i$ is supported in $\mathcal I_i$, $i=1,2,\dots, k$.
\end{proposition}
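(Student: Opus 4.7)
The plan is to prove the bound by multilinear complex interpolation between two endpoint estimates for the $k$-linear operator $\mathcal T(f_1,\dots,f_k) := \prod_{i=1}^k \mathcal E_\lambda^\gamma f_i$. The trivial endpoint is
\[
\|\mathcal T(f_1,\dots,f_k)\|_{L^\infty(d\mu)} \le \prod_{i=1}^k\|f_i\|_{L^1(I)},
\]
which follows from the pointwise bound $|\mathcal T(\vec f)(x)|\le |a(x)|^k\prod_i\|f_i\|_{L^1(I)}$ together with $\|a\|_\infty\le 1$. The key nontrivial endpoint is
\[
\|\mathcal T(f_1,\dots,f_k)\|_{L^2(d\mu)}^2 \le C\langle\mu\rangle_\alpha L^{-(k^2-k)/2}\lambda^{-(\alpha-d+k)}\prod_{i=1}^k \|f_i\|_{L^2(I)}^2,
\]
which I would derive from Lemma \ref{lem:l2} via a Bernstein-type inequality for fractal measures.

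The fractal Bernstein step I would use asserts that, for any Schwartz $F$ with $\hat F$ essentially supported in $B(0,R)$ and any $\mu$ satisfying \eqref{eqn:alpha-diml measure},
\[
\int |F|^2\,d\mu \;\le\; C\langle\mu\rangle_\alpha R^{d-\alpha}\int|F|^2\,dx.
\]
This I would prove by covering $\mathbb R^d$ with a bounded-overlap lattice of balls $B_j$ of radius $R^{-1}$, invoking the local Bernstein bound
\[
\sup_{B_j}|F|^2 \;\lesssim\; \sum_{\ell\ge 0} 2^{-N\ell}\,|2^\ell B_j|^{-1}\int_{2^\ell B_j}|F|^2\,dx,
\]
weighting by $\mu(B_j)\le\langle\mu\rangle_\alpha R^{-\alpha}$, using $\mu(B_j)/|2^\ell B_j|\lesssim\langle\mu\rangle_\alpha R^{d-\alpha}2^{-\ell d}$, and summing using the bounded overlap of the dilates $2^\ell B_j$ (the resulting $\ell$-series converges for $N>0$). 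Applied with $R=C\lambda$ to $F=\mathcal T(\vec f)$, whose Fourier transform is essentially supported in $\lambda\cdot\bigl(\gamma(\mathcal I_1)+\cdots+\gamma(\mathcal I_k)\bigr)\subset B(0,C\lambda)$, this produces the factor $\langle\mu\rangle_\alpha\lambda^{d-\alpha}$, which combined with Lemma \ref{lem:l2} yields the claimed $L^2(d\mu)$ endpoint via $\lambda^{d-\alpha}\cdot\lambda^{-k}=\lambda^{-(\alpha-d+k)}$.

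Multilinear complex interpolation between the two endpoints, traced along $(1/p,1/q)=(1-\theta/2,\theta/2)$ for $\theta\in[0,1]$, then delivers the desired inequality on the diagonal $1/p+1/q=1$ with $q\in[2,\infty]$. For the remaining range $1/p+1/q<1$ (i.e.\ $p\ge q'$), I would use $\|f_i\|_{L^{q'}(I)}\le\|f_i\|_{L^p(I)}$, valid by H\"older's inequality since $|I|=1$ and $p\ge q'$. The main obstacle is the fractal Bernstein step: because $a$ is only assumed bounded and compactly supported, $\widehat{a^k}$ contributes Schwartz-type tails around the sumset instead of yielding a genuinely compact Fourier support for $F$, so the local Bernstein inequality must be phrased in terms of essential (rapid-decay) Fourier support. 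This is routine but requires careful bookkeeping of the tails and of the dependence on $N$ in the local Bernstein bound.
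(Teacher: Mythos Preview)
Your proposal is correct and follows essentially the same strategy as the paper: interpolate between the trivial $L^1\to L^\infty$ endpoint and the $L^2(d\mu)$ endpoint obtained from Lemma~\ref{lem:l2} via a fractal Bernstein inequality, then use H\"older on $I$ to pass from $p=q'$ to $p\ge q'$. The only difference is cosmetic: the paper derives the fractal Bernstein step in one line via the convolution identity $\prod_i\mathcal E_\lambda^\gamma f_i=(\prod_i\mathcal E_\lambda^\gamma f_i)\ast\phi_\lambda$ together with $\||\phi_\lambda|\ast\mu\|_\infty\lesssim\langle\mu\rangle_\alpha\lambda^{d-\alpha}$, whereas you obtain the same bound through a ball-covering and local Bernstein argument (and you are, in fact, more explicit than the paper about the tails introduced by the amplitude $a$).
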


\begin{proof}
Since we have the trivial estimate $\| \prod_{i=1}^k \mathcal E_\lambda^\gamma f_i \|_{L^\infty(d\mu)} \le \prod_{i=1}^k \|f_i\|_{L^1}$,
  in view of interpolation  it suffices to show that
    \[
    \| \prod_{i=1}^k \mathcal E_\lambda^\gamma f_i \|_{L^2(d\mu)} \le C \langle \mu\rangle_\alpha^{\,\,\frac12}
    L^{-\frac{k^2 -k}{4}} \lambda^{- \frac{\alpha - d +k}{2}} \prod_{i=1}^k \|f_i\|_{L^2}.
    \]
    Since the Fourier transform of $ \prod_{i=1}^k \mathcal E_\lambda^\gamma f_i$ is
    supported in a ball of radius $C \sqrt{2k} \lambda$ for some constant $C> 0$, we observe  that
    $
    \prod_{i=1}^k \mathcal E_\lambda^\gamma f_i = ( \prod_{i=1}^k \mathcal E_\lambda^\gamma f_i) \ast \phi_\lambda,
    $
    where $\phi_\lambda(x) = \lambda^d \phi(\lambda x)$ and $\phi$ is a Schwartz function such that
    $\widehat\phi =0 $ if $|\xi| \ge 2 C \sqrt{2k}$, and $\widehat \phi =1$ if $|\xi |\le C \sqrt{2k}$.
    Note that $ |\phi_\lambda| \ast \mu(x) \le C \langle \mu \rangle_\alpha \lambda^{d-\alpha}$.
    By Lemma \ref{lem:l2}, it follows that
    \[
    \| \prod_{i=1}^k \mathcal E_\lambda^\gamma f_i \|_{L^2(d\mu)}
    \le \| \prod_{i=1}^k \mathcal E_\lambda^\gamma f_i \|_{L^2(\mathbb R^d)} \| | \phi_\lambda | \ast \mu\|^{\frac12}_{\infty} \le C \langle \mu \rangle_\alpha^{\,\,\frac12} L^{-\frac {k^2 - k}{4}} \lambda^{-\frac{\alpha - d +k}{2}} \prod_{i=1}^k \| f_i \|_{L^2}
    \]
    as desired.
\end{proof}

\renewcommand{\clag}{\Gamma (k,\epsilon)}

\subsection{\it The induction quantity}
For $\lambda \geq 1$, $1\le p, q\le \infty$, and $\epsilon>0$, we
define $Q_\lambda  =Q_\lambda(p, q,\epsilon)$ by setting
\begin{align}\label{eqn:aqlambda}
Q_\lambda  = \sup\{\, \|\mathcal E^\gamma_\lambda f\|_{L^q(d\mu)}: \mu\in
\mathfrak M(\alpha,1),\, \gamma\in \clag, \, {\|f\|_{L^p(I)}\le 1},\,  a \in \mathfrak A \},
\end{align}
where $\mathfrak{A}$ is a set of measurable functions supported in
$B(0,1)$ and $\|a\|_\infty\le 1$. It is clear that $Q_\lambda$ is
finite for any $\lambda
>0$.

\begin{lemma}\label{lem:induction}
    Let $\gamma\in  \clag$, $\mu\in \mathfrak M(\alpha,1)$,
    and let $\lambda\ge  1$, $0<|h|<1$. Suppose that $f$
    is supported in the interval $|\![\tau,\tau+h]\!|\subset[0,1]$. Then, if
    $\epsilon>0$ is sufficiently small,  there is a constant $\delta>0$,
    independent of $\gamma$, such that if $0<|h|\le \delta$
    \begin{equation}\label{eqn:qlambda}
    \|\mathcal E^\gamma_\lambda f \|_{L^q(d\mu)}\le C \langle \mu \rangle_\alpha^{\,\,\frac1q} \,|h|^{1-\frac1p-\frac{\beta_{k}(\alpha-d+k)}{q}} Q_{\lambda }
    \|f\|_p.
    \end{equation}
\end{lemma}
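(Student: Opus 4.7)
The plan is to reduce the inequality to the definition of $Q_\lambda$ by rescaling both in the parameter $t$ (using Lemma \ref{lem:normalization}) and in the spatial variable $x$ (using Lemma \ref{lem:rescale}).

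First I would substitute $t = hu + \tau$ in the inner integral defining $\mathcal E_\lambda^\gamma f$, which produces a factor $|h|$ and transforms $f$ into $\tilde f(u) := f(hu+\tau)$, supported in $[0,1]$ with $\|\tilde f\|_p = |h|^{-1/p}\|f\|_p$. The Taylor-based identity $\gamma(hu+\tau) - \gamma(\tau) = M_\tau^{\gamma,k}D_h^k \gamma_\tau^h(u)$ built into \eqref{eqn:normalcur} factorizes the phase as
\[
e^{i\lambda x\cdot\gamma(hu+\tau)} = e^{i\lambda x\cdot\gamma(\tau)}\,e^{i\lambda (D_h^k (M_\tau^{\gamma,k})^{T} x)\cdot\gamma_\tau^h(u)},
\]
in which the first exponential is a unit-modulus modulation in $x$ only. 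This suggests the change of variables $y = D_h^k (M_\tau^{\gamma,k})^{T} x$ in the outer $L^q(d\mu)$ integral. By the very definition \eqref{eqn:normalmeasure}, the new measure is $\nu := \mu_{(M_\tau^{\gamma,k})^{T},h}^k$, and the expression inside the norm becomes
\[
|h|\,\tilde a(y)\int_0^1 e^{i\lambda y\cdot \gamma_\tau^h(u)}\tilde f(u)\,du,
\]
where $\tilde a(y) = a\big((M_\tau^{\gamma,k})^{-T}(D_h^k)^{-1}y\big)$ up to a unit-modulus phase.

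The technical point is to put the amplitude $\tilde a$ back in the admissible class $\mathfrak A$. Because $|h|<1$ and $\|(M_\tau^{\gamma,k})^{T}\|$ is uniformly bounded when $\gamma$ is taken close to $\gamma_\circ^d$, $\tilde a$ is supported in a ball $B(0,C)$ with $C = O(1)$ and $\|\tilde a\|_\infty\le 1$. I would cover $B(0,C)$ with $N = O(1)$ balls $B(z_j,1)$ carrying a partition of unity $\{\chi_j\}$ and split $\tilde a = \sum_j \tilde a\chi_j$. Translating $y \mapsto z_j + w$ on the $j$-th piece converts the amplitude to an element $b_j\in \mathfrak A$ while producing a modulation $\tilde f_j(u) := e^{i\lambda z_j\cdot\gamma_\tau^h(u)}\tilde f(u)$ with $|\tilde f_j|=|\tilde f|$; the translated measure $\nu_j$ has the same $\langle\cdot\rangle_\alpha$ norm as $\nu$. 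Each piece is then exactly of the form $\mathcal E_\lambda^{\gamma_\tau^h}\tilde f_j$ with admissible data.

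By Lemma \ref{lem:normalization}, $\gamma_\tau^h \in \Gamma(k,\epsilon)$ provided $|h|\le\delta$, so the definition of $Q_\lambda$ applied to the normalized measure $\nu_j/\langle\nu_j\rangle_\alpha$ yields
\[
\|\mathcal E_\lambda^{\gamma_\tau^h}\tilde f_j\|_{L^q(d\nu_j)} \le \langle\nu_j\rangle_\alpha^{1/q}\, Q_\lambda\,\|\tilde f\|_p.
\]
Summing over the $N = O(1)$ pieces and combining with the factor $|h|$ from the $t$-substitution and $\|\tilde f\|_p = |h|^{-1/p}\|f\|_p$ gives
\[
\|\mathcal E_\lambda^\gamma f\|_{L^q(d\mu)} \lesssim |h|^{\,1-1/p}\,\langle\nu\rangle_\alpha^{1/q} \,Q_\lambda\, \|f\|_p,
\]
and invoking Lemma \ref{lem:rescale} in the form $\langle\nu\rangle_\alpha \lesssim \langle\mu\rangle_\alpha\,|h|^{-\beta_k(\alpha-d+k)}$ produces the claimed estimate \eqref{eqn:qlambda}. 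The main obstacle is essentially bookkeeping: verifying that the Jacobian from the $x$-substitution, the $L^p$-scaling of $\tilde f$, and the rescaling exponent from Lemma \ref{lem:rescale} combine to exactly the stated power $1 - 1/p - \beta_k(\alpha-d+k)/q$ of $|h|$, and that the amplitude-support issue is handled cleanly by the partition-of-unity/modulation trick without introducing any $|h|$- or $\lambda$-dependent losses.
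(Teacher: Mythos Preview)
Your proposal is correct and follows essentially the same route as the paper's proof: the parameter substitution $t\mapsto hu+\tau$, the phase factorization via \eqref{eqn:normalcur}, the spatial change of variables $y=D_h^k(M_\tau^{\gamma,k})^T x$ inducing the rescaled measure of \eqref{eqn:normalmeasure}, and then an appeal to Lemmas \ref{lem:normalization} and \ref{lem:rescale} together with the definition of $Q_\lambda$. The only difference is that the paper simply asserts the rescaled amplitude $a_{\tau,h}^k$ lies in $\mathfrak A$ (since $D_h^k(M_\tau^{\gamma,k})^T\!\supp a$ stays in a unit ball up to harmless constants), whereas you treat this point more carefully with a partition of unity and modulation; both handle the bookkeeping of the $|h|$-powers identically.
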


\begin{proof}
    Let us denote $f_h(t) = h f(ht +\tau)$.
    Recalling \eqref{eqn:normalcur} we have 
    \begin{align*}
    | \mathcal E_\lambda^\gamma f(x) |  = \Big|\int_I e^{ i \lambda x \cdot (\gamma(ht + \tau) - \gamma(\tau))} a(x) f_h(t) dt \Big|
    = \Big|\int_I e^{i \lambda (M_\tau^{\gamma, k} D_h^k)^t x \cdot \gamma_\tau^h(t)}  a(x) f_h(t) dt \Big|.
    \end{align*}
    Let us set $\mu_{\tau,h}^k := \mu_{(M_\tau^{\gamma, k})^t,h}^k$ which is given by \eqref{eqn:normalmeasure}.
    Assuming that $\langle \mu\rangle_\alpha \neq 0$, we set
    \begin{equation*}
    d \widetilde \mu(x) = \frac { |h|^{\beta_k( \alpha -d+k)}}{C \| (M_\tau^{\gamma, k})^t \|^\alpha  \langle \mu \rangle_\alpha } d \mu_{\tau,h}^k(x).
    \end{equation*}
    Then $\langle \widetilde \mu \rangle_\alpha \le 1$, i.e.
    $\widetilde \mu \in \mathfrak M(\alpha,1)$ by Lemma \ref{lem:rescale}. 
Routine changes of variables gives
    \begin{align*}
    \| \mathcal E_\lambda^\gamma f \|_{L^q(d\mu)}^q
    & \le \int \Big| a_{\tau,h}^k (x)  \int_I e^{ i \lambda x \cdot \gamma_\tau^h(t)}  f_h(t) dt \Big|^q d \mu_{\tau,h}^k (x) \\
    & = \frac{ C \| (M_\tau^{\gamma, k})^t \|^\alpha \langle \mu \rangle_\alpha   }{ |h|^{  \beta_k(\alpha-d+k)} }
    \int \Big| a_{\tau,h}^k (x) \int_I e^{ i \lambda x \cdot \gamma_\tau^h(t)}  
    f_h(t) dt \Big|^q d \widetilde\mu (x) ,
    \end{align*}
    where $a_{\tau,h}^k (x) = a ((M_\tau^{\gamma, k} D_h^k)^{-t}x)$.
    If $\epsilon >0$ is sufficiently small, then $\| (M_\tau^{\gamma, k})^{-t} \| \le c $ uniformly for $\gamma \in \Gamma (k, \epsilon)$.
    Then $\gamma_\tau^h(t) \in \Gamma (k, c |h|\epsilon) \subset \Gamma (k,\epsilon)$ if $0 < |h| \le \delta$ for small $\delta=\delta(\epsilon)$.
    In addition, $a_{\tau,h}^k \in \mathfrak A$ since $\supp a_{\tau,h}^k = D_h^k (M_\tau^{\gamma, k})^t \supp a$.
   By the definition of $Q_\lambda $, it follows that
    \begin{align*}
    \int | \mathcal E_\lambda^\gamma f |^q d \mu (x)
    & \lesssim \langle \mu \rangle_\alpha |h|^{-\beta_k( \alpha -d +k)} \int |\mathcal E_\lambda^{\gamma_\tau^h} f_h |^q d \widetilde \mu(x)
    \le C \langle \mu \rangle_\alpha |h|^{-\beta_k( \alpha -d +k)} (Q_\lambda    \| f_h\|_p)^q,
    \end{align*}
    which implies \eqref{eqn:qlambda} as $\| f_h \|_p = h^{1-1/p} \|f\|_p$.
\end{proof}

\subsection{\it Proof of Theorem \ref{thm:osc-est}} Let $\ell$ be a fixed integer such that $1 \le \ell \le
d-1-[d-\alpha]$ and let $k = d-\ell$. We choose a sufficiently small 
$\epsilon>0$ such that $\det (M_\tau^{\gamma, k}) \ge \frac12$ if $\gamma \in
\Gamma(d,\epsilon)$, and Lemma \ref{lem:l2} and \ref{pro:l2fractal}
hold whenever $\gamma \in \Gamma(k,\epsilon)$. Let us be given  a
curve $\gamma \in C^{d+1}([0,1])$  satisfying
\eqref{eqn:non-degenerate}. By Lemma \ref{lem:normalization}, there
exists $\delta>0 $ such that $\gamma_\tau^h \in \Gamma(k,\epsilon)$
for $|h|<\delta$. Then Lemma \ref{lem:induction} also holds for such
$\gamma_\tau^h \in \Gamma(k,\epsilon)$. Thus, after decomposing the
interval $I$ into finite union of intervals of length less than
$\delta$,  by  rescaling  we may assume that $\gamma \in \Gamma (k,
\epsilon)$ and $\mu \in \mathfrak M (\alpha, 1)$.

In fact, we decompose $I = \bigcup_{j=0}^{n-1} [\frac j n,
\frac{j+1} n ] =: \bigcup_{j=1}^{n-1} I_j$ with $h:= 1/n<\delta$.
Then we have
\begin{align*}
\|\mathcal E_\lambda^\gamma f\|_{L^q(d\mu)} \le \sum_{j=0}^{n-1}
\| \mathcal E_\lambda^\gamma f\chi_{[jh, jh+h ]} \|_{L^q(d\mu)} = \sum_{j=0}^{n-1} (C_{\gamma,j,h})^{\frac1q} \| \mathcal E_\lambda^{\gamma_j} f_j \|_{L^q(d\mu_j)},
\end{align*}
where $f_j(t) = h f(ht +j h)\chi_I(t)$, $\gamma_j = \gamma_{jh}^h$,
and $ \mu_j = \frac 1 {C_{\gamma,j,h}} \mu_{jh}^h$ with 
$C_{\gamma,j,h} = C \| (M_{jh}^{\gamma,k} )^{-t} \|^\alpha
\langle\mu\rangle_\alpha$ $h^{-\beta_{d-\ell}(\alpha-\ell)}. $
Hence, it is enough to obtain the desired estimate for each $\|
\mathcal E_\lambda^{\gamma_j} f_j \|_{L^q(d\mu_j)}$. Clearly, from
Lemma \ref{lem:normalization}  and \ref{lem:rescale} it follows
that $\gamma_j \in \Gamma (k, \epsilon)$, and also $\mu_j \in
\mathfrak M(\alpha,1)$. Therefore we are reduced to showing
\eqref{eqn:osc1} for $\gamma\in \Gamma (k,\epsilon)$, $\mu\in
\mathfrak M(\alpha,1)$.

Let $q \ge p \ge 1$ be numbers satisfying the conditions in Theorem
\ref{thm:osc-est}.  Note that the other case $1\le q <p$ follows by
H\"{o}lder's inequality.  Also let $Q_\lambda  =Q_\lambda(p,
q,\epsilon)$ be defined by \eqref{eqn:aqlambda}. Then, for the proof
of Theorem \ref{thm:osc-est} we need to show
\begin{equation}\label{qq}
Q_\lambda  \lesssim \lambda^{-\frac {\alpha-\ell} q}.
\end{equation}

Let $\gamma\in \Gamma (k,\epsilon)$, $\mu\in \mathfrak M(\alpha,1)$
be given, and  $f$ be a function supported in $I$ with
$\|f\|_{L^p(I)}=1$ such that
\begin{equation}\label{qqq}
Q_\lambda =Q_\lambda(  p,q,\epsilon) \le  2 \| \mathcal E_\lambda^\gamma
f\|_{L^q(d\mu)}.
\end{equation}
 Let
$A_1,\dots, A_{k-1}$ be dyadic numbers such that
\[ 1=A_0\gg  A_1\gg A_2 \dots\gg  A_{k-1}.\]
These numbers are to be chosen later.  For $i=1, \dots, k-1$, let us
denote by $ \{ \cuxi^i\}$ the collection of closed dyadic intervals
of length $A_i$ which are contained in $[0,1]$. And we set
$f_{\cuxi^i}=\chi_{\cuxi^i} f$ so that, for each $i=1, \dots, k-1,$
$ f=\sum_{\cuxi^i} f_{\cuxi^i}$ almost everywhere whenever $f$ is
supported in $I$. Hence, it follows that
\begin{equation}\label{eqn:iscale}
\mathcal E^\gamma_\lambda f =\sum_{\cuxi^i} \mathcal
E^\gamma_\lambda f_{\cuxi^i}, \,\, i=1, \dots, k-1.
\end{equation}

We now recall the multilinear decomposition  from  \cite{HaLe} (Lemma 2.8).

\begin{lemma}\label{lem:multidecomp} Let $\gamma:I\to \mathbb R^d$ be a smooth
    curve. Let $A_0,A_1,\dots, A_{k-1}$, and $\{\cuxi^i\}$, $i=1, \dots,
    k-1$ be defined as in the above. Then, for any $x\in \mathbb R^d$,
    there is a constant $C$, independent of $\gamma, x$, $A_0,A_1,\dots,
    A_{k-1}$, such that
    \begin{equation}
    \begin{aligned}\label{eqn:1d-decomp}
    |\mathcal E^\gamma_\lambda & f(x)|\le C \sum_{i=1}^{k-1} A_{i-1}^{-2(i-1)} \max_{\cu i}
    |\mathcal E^\gamma_\lambda \ef{i}(x)|\\
    &+ CA_{k-1}^{-2(k-1)}\max_{\substack{\cuu1{k-1},\cuu2{k-1},
            \dots,\cuu k{k-1};\\ \Delta(\cuu1{k-1}, \cuu2{k-1},
            \dots,\cuu k{k-1})\ge A_{k-1}}} |\prod_{i=1}^k \mathcal E^\gamma_\lambda \eff{k-1}i
    (x)|^\frac1k.
    \end{aligned}
    \end{equation}
    Here $\cuu{i}j$ denotes the element in $\{\cuxi^i\}$ and $
\Delta(\cuu1{k-1},
            \dots,\cuu k{k-1})=\min_{1\le j<m \le k}\, \dist(\cuu j{k-1}, \cuu m{k-1}).
$
\end{lemma}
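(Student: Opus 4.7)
The plan is to prove the inequality by iterating a one-scale broad/narrow dichotomy across the scales $A_0 = 1, A_1, \dots, A_{k-1}$. Fix $x$ and abbreviate $\phi_{\mathcal I}(x) := \mathcal{E}^\gamma_\lambda f_{\mathcal I}(x)$. The basic tool I would establish first is the following one-step dichotomy: if $J$ is an interval of length $L$ partitioned into sub-intervals of length $\ell$, then either a single sub-interval $\mathcal{I}_*$ with maximal $|\phi_{\mathcal{I}_*}(x)|$ dominates the sum (so that $|\mathcal{E}^\gamma_\lambda(\chi_J f)(x)| \leq C|\phi_{\mathcal{I}_*}(x)|$, the narrow case), or else there exist two sub-intervals $\mathcal{I}_*, \mathcal{I}_{**}$ with $\dist(\mathcal{I}_*, \mathcal{I}_{**}) \geq \ell$ satisfying $|\mathcal{E}^\gamma_\lambda(\chi_J f)(x)| \leq C(L/\ell)^2 (|\phi_{\mathcal{I}_*}(x)||\phi_{\mathcal{I}_{**}}(x)|)^{1/2}$ (broad case). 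The broad bound is obtained by pairing the trivial estimate $|\mathcal{E}^\gamma_\lambda(\chi_J f)(x)| \leq (L/\ell) F$, where $F = \max_{\mathcal I}|\phi_{\mathcal I}(x)|$, with the observation that in the non-narrow case one can locate a far sub-interval $\mathcal{I}_{**}$ with $|\phi_{\mathcal{I}_{**}}(x)| \geq (\ell/L)^2 F$: then the geometric mean satisfies $(|\phi_{\mathcal{I}_*}||\phi_{\mathcal{I}_{**}}|)^{1/2} \geq (\ell/L) F \geq (\ell/L)^2 |\mathcal{E}^\gamma_\lambda(\chi_J f)(x)|$. The case $k=2$ of Lemma \ref{lem:multidecomp} is precisely this one-step dichotomy applied with $J = [0,1]$ and $\ell = A_1$.

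For $k \geq 3$, I would then iterate. In the broad outcome at scale $A_i$ one refines one of the current pieces at the next finer scale $A_{i+1}$ using the same one-step dichotomy, producing either a new narrow term (contributing $F_{i+1}$ with the correct accumulated coefficient) or one additional pairwise-separated piece at the finer scale. Inductively, at stage $i$, one maintains a bound of the form
\begin{align*}
|\mathcal{E}^\gamma_\lambda f(x)| \leq C \sum_{j=1}^{i-1} A_{j-1}^{-2(j-1)} \max_{\mathcal{I}^j}|\phi_{\mathcal{I}^j}(x)| + C\, A_{i-1}^{-2(i-1)} \Big( \prod_{m=1}^{i} |\phi_{\mathcal{I}^{i-1}_m}(x)| \Big)^{1/i},
\end{align*}
with $i$ pairwise $A_{i-1}$-separated intervals $\mathcal{I}^{i-1}_1, \dots, \mathcal{I}^{i-1}_i$ at scale $A_{i-1}$. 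Running the procedure through $i = 2, 3, \dots, k$ yields the full conclusion: either one terminates at some intermediate narrow step, absorbing into the $F_j$ sum, or one reaches $k$ pairwise $A_{k-1}$-separated intervals at scale $A_{k-1}$ in the final multilinear term. The telescoping of the coefficient across stages is dictated by the fact that a broad refinement from stage $i-1$ to stage $i$ multiplies the factor $A_{i-2}^{-2(i-2)}$ by $(A_{i-1}/A_{i-2})^{-2}$ arising from the one-step broad estimate, together with powers of scale ratios arising from rebalancing exponents.

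The main obstacle is precisely this rebalancing step: when refining a single factor $|\phi_{\mathcal{I}^{i-1}_1}|^{1/i}$ via the broad dichotomy into a geometric mean of two factors at scale $A_i$, the naive Hölder exponents on the $i+1$ resulting factors are unequal ($1/(2i)$ on the two new factors versus $1/i$ on the remaining $i-1$ factors), whereas the claimed conclusion demands uniform exponent $1/(i+1)$. I would rebalance by using the elementary estimate $|\phi_{\mathcal{I}^{i-1}_m}(x)| \leq (A_{i-1}/A_i)\max_{\mathcal{I}^i \subset \mathcal{I}^{i-1}_m}|\phi_{\mathcal{I}^i}(x)|$ to replace the remaining scale-$A_{i-1}$ factors by a single maximizing scale-$A_i$ sub-factor in each parent interval, absorbing the resulting $(A_{i-1}/A_i)$ powers into the running coefficient, and then applying Hölder to symmetrize. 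Verifying that these absorbed powers combine exactly to produce the exponent $-2(i-1)$ on $A_{i-1}$ at the $F_i$ narrow-terminations (and $-2(k-1)$ on $A_{k-1}$ at the final stage) is the most delicate piece of bookkeeping; once this is checked, the lemma follows.
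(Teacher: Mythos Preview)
The paper does not give its own proof of this lemma; it is quoted verbatim from Ham--Lee \cite[Lemma 2.8]{HaLe}. Your proposal is essentially the argument used there: an iterated broad/narrow dichotomy across the scales $A_1,\dots,A_{k-1}$, maintaining at stage $i$ an $i$-linear geometric-mean bound with coefficient $A_{i-1}^{-2(i-1)}$, and terminating in a narrow step via the trivial estimate $\big(\prod_m |\phi_{\mathcal I^{i-1}_m}|\big)^{1/i}\le \max_m |\phi_{\mathcal I^{i-1}_m}|\le C F_i$ (which is why the narrow coefficients come out exactly as $A_{i-1}^{-2(i-1)}$).

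One point needs correcting. After refining the maximal factor $\mathcal I^{i-1}_1$ into two $A_i$-separated children $a,b$ and replacing each remaining factor by a maximal child $c_m$, you arrive at the weighted product $(|\phi_a||\phi_b|)^{1/(2i)}\prod_{m\ge 2}|\phi_{c_m}|^{1/i}$ and propose to ``apply H\"older to symmetrize'' to exponent $1/(i+1)$. H\"older goes in the wrong direction here: one cannot pass from these weights to the uniform mean without a lower bound on $|\phi_a|,|\phi_b|$. What actually works is to use the quantitative content of the broad step together with the choice of $\mathcal I^{i-1}_1$ as the maximal factor. Since $|\phi_{c_m}|\le |\phi_{\mathcal I^{i-1}_m}|\le |\phi_{\mathcal I^{i-1}_1}|\le (A_{i-1}/A_i)|\phi_a|$ and $|\phi_b|\ge (A_i/A_{i-1})^2|\phi_a|$, one gets
\[
\frac{(\prod_{m\ge 2}|\phi_{c_m}|)^{2}}{(|\phi_a||\phi_b|)^{\,i-1}} \le \Big(\frac{A_{i-1}}{A_i}\Big)^{4(i-1)},
\]
and this bounds the ``excess'' factor arising in the symmetrization by $(A_{i-1}/A_i)^{2(i-1)/(i(i+1))}$. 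A direct computation then shows the accumulated coefficient satisfies $A_{i-1}^{-2(i-1)}(A_{i-1}/A_i)^{(i+1)/i+2(i-1)/(i(i+1))}\le A_i^{-2i}$ for all $i\ge 2$ (using only $A_i\le A_{i-1}\le 1$), so the inductive hypothesis is restored. With this amendment to the symmetrization step your scheme is correct and matches the Ham--Lee argument.
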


We consider the linear and multi-linear terms in
\eqref{eqn:1d-decomp}, separately. For the linear term, using Lemma
\ref{lem:induction} we see that
\begin{align*}
\Big\|\max_{\cu i} &|\mathcal E^\gamma_\lambda
\ef{i}|\Big\|_{L^q(d\mu)} \le \Big(\sum_{\cu i} \Big\|\mathcal
E^\gamma_\lambda \ef{i}\Big\|_{L^q(d\mu)}^q\Big)^\frac1q
\le
{A_i}^{1-\frac1p-\frac{\beta_{d-\ell}(\alpha -\ell)}{q}} Q_{ \lambda} \Big( \sum_{\cu i} \|\ef{i}\|_p^q\Big)^\frac1q  \\
&\le
{A_i}^{1-\frac1p-\frac{\beta_{d-\ell}(\alpha-\ell)}{q}} Q_{ \lambda}  \Big(
\sum_{\cu i} \|\ef{i}\|_p^p\Big)^\frac1p
={A_i}^{1-\frac1p-\frac{\beta_{d-\ell}(\alpha-\ell)}{q}} Q_{ \lambda}
\|f\|_p\,,
\end{align*}
because $\ell^p \subset \ell^q$ for $q \ge p$.
Applying Proposition
\ref{pro:l2fractal} to the multilinear term, we obtain
\[
\Big\| \max_{\substack{\cuu{d-\ell-1}1,\cuu{d-\ell-1}2, \dots,\cuu{d-\ell-1}{d-\ell};\\
        \Delta(\cuu{d-\ell-1}1, \cuu{d-\ell-1}2, \dots,\cuu{d-\ell-1}{d-\ell})\ge A_{d-\ell-1}}}
|\prod_{i=1}^{d-\ell} \mathcal E^\gamma_\lambda \eff{d-\ell-1}i (x)|^\frac{1}{d-\ell} \Big\|_{L^q(d\mu)}  \le
C
A^{-C}_{d-\ell-1}\lambda^{-\frac{\alpha -\ell} q} \|f\|_p.
\]

By \eqref{eqn:1d-decomp}, \eqref{qqq} and these two estimates, we
get
\begin{align*}
Q_\lambda \le C \sum_{i=1}^{d-\ell-1}
A_{i-1}^{-C}{A_i}^{1-\frac1p -\frac{{\beta_{d-\ell}(\alpha-\ell)}}{q}}
Q_{\lambda} + C A^{-C}_{d-\ell-1}\lambda^{-\frac{\alpha-\ell} q}.
\end{align*}
Since $1-\frac1p-\frac{\beta_{d-\ell}(\alpha-\ell)}{q}>0$, we can
choose $A_1, \dots, A_{d-\ell-1}$, successively,   so that $
CA_{i-1}^{-C}$
${A_i}^{1-\frac1p-\frac{\beta_{d-\ell}(\alpha-\ell)}{q}}<\frac{1}{2(d-\ell)}$
for $i=1,\dots, d-\ell-1$. 
Therefore, we obtain $ Q_\lambda \le
\frac12 Q_\lambda +\lambda^{-\frac{\alpha-\ell} q}$, which implies
\eqref{qq}.

\subsection{\it Proof of Theorem \ref{thm:osc-est2}}
To prove $k$-linear estimate for $p,q$ satisfying $\frac 1k (1 - \frac 1 p )>\frac1q$ we no longer make use of Plancherel's
theorem,  but
we may still use the linear oscillatory integral estimate which is of 1-dimensional in its nature. The following is basically
interpolation between $k$-linear and linear estimates.

\begin{proposition}\label{l2fractal}
    Let $\mathcal I_1,\dots, \mathcal I_k$, $\gamma$, and $\mu$ be given as in Proposition \ref{pro:l2fractal}.
    If $\epsilon>0$ is sufficiently small, then for $p,q$ satisfying
    $q \ge k$ and
    $
    \frac 1k (1 - \frac 1 p ) \le \frac 1 q \le 1- \frac 1p ,
    $
    there is a constant $C$, independent of $\gamma$, such that
    \begin{equation}\label{eqn:k-linear}
    \| \prod_{i=1}^k \mathcal E_\lambda^\gamma f_i \|_{L^{\frac qk} (d\mu)}
    \le C \langle\mu\rangle_\alpha^{\,\,\frac kq} \lambda^{-k(1-\frac1p - \frac{d-\alpha}{q})}\prod_{i=1}^k\| f_i \|_p
    \end{equation}
    whenever $f_i$ is supported in $\mathcal I_i$, $i=1,2,\dots, k$.
\end{proposition}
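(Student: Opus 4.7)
My plan is to prove the estimate by multilinear complex interpolation between two endpoint bounds: a $k$-linear endpoint on the boundary line $\tfrac1p+\tfrac{k}{q}=1$, supplied by Proposition~\ref{pro:l2fractal}, and a linear endpoint on the complementary boundary $\tfrac1p+\tfrac1q=1$, derived from the basic $L^2$-decay of $\mathcal E_\lambda^\gamma$ together with H\"older's inequality applied to the $k$-fold product.

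First, on the line $\tfrac1p+\tfrac{k}{q}=1$ with $q\ge 2k$, I would apply Proposition~\ref{pro:l2fractal} with input exponent $p$ and output exponent $q/k\ge 2$. Since $\tfrac1p+\tfrac{1}{q/k}=1$ lies on the edge of the range of Proposition~\ref{pro:l2fractal}, this yields
\[
\Bigl\|\prod_{i=1}^{k} \mathcal E_\lambda^\gamma f_i\Bigr\|_{L^{q/k}(d\mu)} \le C\,\langle\mu\rangle_\alpha^{k/q}\,\lambda^{-k(\alpha-d+k)/q}\prod_{i=1}^k\|f_i\|_p.
\]
Using $1-\tfrac1p=\tfrac{k}{q}$ on this line, one checks that $-k(\alpha-d+k)/q$ equals the target exponent $-k(1-\tfrac{1}{p}-\tfrac{d-\alpha}{q})$.

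Second, on the line $\tfrac1p+\tfrac1q=1$ with $q\ge 2$, I would derive a linear endpoint. For $\gamma\in\Gamma(k,\epsilon)$ we have $|\gamma'(t)|\sim 1$ uniformly, whence $|\gamma(t)-\gamma(s)|\gtrsim|t-s|$; Schur's test applied to the kernel $K(t,s)=\widehat{|a|^2}(-\lambda(\gamma(t)-\gamma(s)))$ (whose $t$-integral is $O(\lambda^{-1})$ by rapid decay of $\widehat{|a|^2}$) yields $\|\mathcal E_\lambda^\gamma f\|_{L^2(\mathbb R^d)}\lesssim\lambda^{-1/2}\|f\|_2$. Converting to $L^2(d\mu)$ via the convolution argument from the proof of Proposition~\ref{pro:l2fractal} (using $\||\phi_\lambda|\ast\mu\|_\infty\lesssim\langle\mu\rangle_\alpha\lambda^{d-\alpha}$) and interpolating with the trivial $L^1\to L^\infty(d\mu)$ bound gives
\[
\|\mathcal E_\lambda^\gamma f\|_{L^q(d\mu)} \le C\,\langle\mu\rangle_\alpha^{1/q}\,\lambda^{-(\alpha-d+1)/q}\|f\|_p\quad\text{on }\tfrac1p+\tfrac1q=1,\;q\ge 2.
\]
Applying H\"older's inequality to the $k$-fold product then produces
\[
\Bigl\|\prod_{i=1}^{k} \mathcal E_\lambda^\gamma f_i\Bigr\|_{L^{q/k}(d\mu)} \le C\langle\mu\rangle_\alpha^{k/q}\lambda^{-k(\alpha-d+1)/q}\prod_{i=1}^k\|f_i\|_p\quad\text{on }\tfrac1p+\tfrac1q=1,\;q\ge k,
\]
which again matches the target exponent on this boundary.

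Since the target exponent $-k(1-\tfrac1p-\tfrac{d-\alpha}{q})$ and the weight power $k/q$ are both affine in $(1/p,1/q)$, multilinear complex interpolation along chords joining an endpoint on $\tfrac1p+\tfrac{k}{q}=1$ (from Step~1) to an endpoint on $\tfrac1p+\tfrac1q=1$ (from Step~2) preserves the coincidence of bound and target. Parametrizing such chords by the two output exponents $q_A\in[2k,\infty]$ and $q_B\in[k,\infty]$ together with $\theta\in[0,1]$, one verifies that they sweep out exactly the claimed range $q\ge k$ and $(1-\tfrac1p)/k\le\tfrac1q\le1-\tfrac1p$, the degenerate boundary $q=k$ being recovered by continuity. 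The main bookkeeping hurdle will be this geometric verification — showing that each $(1/p,1/q)$ in the claimed region admits a valid chord representation between the two boundary-line parameter ranges — since the linear $L^2$ endpoint requires only $|\gamma'|\gtrsim 1$ and the complex interpolation itself is routine once the endpoint exponents match the target on their respective lines.
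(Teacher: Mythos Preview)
Your approach is correct and essentially matches the paper's: both interpolate between the $k$-linear endpoint on $\tfrac1p+\tfrac kq=1$ (coming from Lemma~\ref{lem:l2}/Proposition~\ref{pro:l2fractal}) and a linear endpoint on $\tfrac1p+\tfrac1q=1$ obtained from one-variable oscillatory decay followed by H\"older on the $k$-fold product. The only differences are organizational: the paper first reduces from $L^{q/k}(d\mu)$ to $L^{q/k}(\mathbb R^d)$ uniformly in $(p,q)$ via $\||\phi_\lambda|\ast\mu\|_\infty\lesssim\langle\mu\rangle_\alpha\lambda^{d-\alpha}$ and then interpolates the resulting Lebesgue estimate \eqref{interpolation}, whose exponent $-k(1-\tfrac1p)$ depends only on $p$ --- so for each fixed $p\in[1,2]$ one simply interpolates in the output exponent and the chord-geometry bookkeeping you flag disappears; and for the linear decay the paper invokes H\"ormander's generalized Hausdorff--Young theorem rather than your equivalent Schur-test derivation.
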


\begin{proof} For $q\ge k$, Minkowski's inequality gives
    \[
    \| \prod_{i=1}^k \mathcal E_\lambda^\gamma f_i \|_{L^{\frac qk} (d\mu)} \le \|
    \prod_{i=1}^k \mathcal E_\lambda^\gamma f_i \|_{L^{\frac qk} (\mathbb R^d)} \| |
    \phi_\lambda | \ast \mu\|^{\frac k q}_{\infty} \le C \langle \mu\rangle_\alpha^{\,\,\frac kq}
    \lambda^{\frac{k(d-\alpha) }q} \| \prod_{i=1}^k \mathcal E_\lambda^\gamma f_i
    \|_{L^{\frac q k}(\mathbb R^d)}.
    \]
Thus it suffices to show that for $\frac 1k (1 - \frac 1 p ) \le
\frac 1 q \le 1- \frac 1p $,
    \begin{equation}\label{interpolation}
    \| \prod_{i=1}^k \mathcal E_\lambda^\gamma f_i
    \|_{ L^{\frac qk}(\mathbb R^d)} \le \lambda^{-k(1-\frac 1p) } \prod_{i=1}^k \| f_i \|_{L^p(I)}.
    \end{equation}

   For $\frac kq= 1-\frac1p ,\,\, 1\le p\le 2$, the estimate \eqref{interpolation} follows by 
    interpolation between the $L^2\rightarrow L^2$ estimate \eqref{eqn:k-l2} and the trivial 
    estimate $ \| \prod_{i=1}^k \mathcal E_\lambda^\gamma f_i \|_{L^\infty} \le \prod_{i=1}^k \|f_i\|_{L^1}$, provided $f_i$ is supported in $\mathcal I_i$, $i=1,2,\dots,
k$.
On the other hand,  since $|\partial_{x_1}\partial_t (x\cdot
\gamma(t))|\sim 1$, using
    H\"ormander's generalization of Hausdorff-Young's theorem, we have
  $\| \mathcal E_\lambda^\gamma f \|_{L^q}\le
  C\lambda^{-(1-\frac1p)}\|f\|_p.$
    By H\"older's inequality  we obtain  \eqref{interpolation} for $\frac 1q=  1-\frac1p ,\,\, 1\le p\le
    2$. Therefore, by  interpolating the estimates  for
    $\frac kq= 1-\frac1p$ and $\frac 1q=  1-\frac1p$ we obtain \eqref{interpolation} for $p,q$ satisfying
    $q \ge k$ and $
    \frac 1k (1 - \frac 1 p ) \le \frac 1 q \le 1- \frac 1p
    $.
\end{proof}

Once Proposition \ref{l2fractal} is obtained, one can prove Theorem
\ref{thm:osc-est2} by adopting the same line of argument in the proof of Theorem \ref{thm:osc-est}. So we shall be brief. We use
\eqref{eqn:k-linear} with $k = [d-\alpha]+1$ to estimates the
multilinear terms in \eqref{eqn:1d-decomp},  and  to the linear
terms we apply Lemma \ref{lem:induction}. Thus, we  have
\begin{equation*}
Q_\lambda \le C
\langle\mu\rangle_\alpha^{\,\,\frac1q}\sum_{i=1}^{[d-\alpha]}
A_{i-1}^{-C}{A_i}^{1 - \frac 1 p
-\frac{\beta_{[d-\alpha]+1}(1-\langle d-\alpha\rangle)}{q} }
Q_{\lambda} + C \langle\mu\rangle_\alpha^{\,\,\frac 1 q}
A^{-C}_{d-\ell-1}\lambda^{-(1 -\frac 1 p - \frac{d-\alpha}{q})}
\end{equation*}
provided that $q \ge [d-\alpha]+1$ and $(1-1/p)/([d-\alpha]+1) \le
1/q \le 1-1/p$.  Therefore,
we obtain $Q_\lambda \lesssim
\langle\mu\rangle_\alpha^{\,\,\frac1q}\lambda^{- (1 -\frac 1 p -
\frac{d-\alpha}{q})}$ whenever $1 - \frac 1 p
-\frac{\beta_{[d-\alpha]+1}(1-\langle d-\alpha\rangle)}{q} >0 $.
This completes the proof.

\section{Proof of Theorem \ref{thm:erd-ober}}\label{sec:pf bi}

Proof of Theorem \ref{thm:erd-ober} is based on an adaptation of
Erdo\u{g}an's argument in \cite{Er}. (Also see \cite{ErOb}.) The
following is basically a 2-dimensional result in that we only need to
assume $\gamma'$ and $\gamma''$ are linearly independent. To begin
with, by finite decomposition, translation and scaling  we may
assume, as before,  that $\gamma$ is close to  $\gamma_\circ^d$ such
that $\|\gamma-\gamma_\circ^d\|_{C^N(I)}\lesssim \epsilon_0$ for
sufficiently large $N$ and small enough $\epsilon_0$.

\subsection{\it Geometric observations}

To estimate the integrals on the right hand side of (\ref{eqn:whit ineqn}), we begin with some geometric observations regarding the curves.
\begin{lem}\label{lem:geo_rectangle}
  Let $\mathcal I=[\tau_{1},\tau_{2}] \subset [0,1]$  be an interval of length $L \gtrsim \lambda^{-\frac{1}{2}}$,
  then $\lambda \gamma (\mathcal I) + O(1)$ is contained in a parallelotope $\lambda  M^{\gamma,d}_{\tau_1} {R_{L}}+\lambda\gamma(\tau_1)$ where ${R_{L}}$ is a rectangle of dimension $C L \times CL^{2} \times \cdots \times C L^{2}$ which is centered at the origin.
\end{lem}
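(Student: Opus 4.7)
\textbf{Proof plan for Lemma \ref{lem:geo_rectangle}.} The plan is to Taylor expand $\gamma$ around $\tau_1$ and read off the size of each coordinate in the basis of derivatives at $\tau_1$. Concretely, for any $t \in \mathcal{I}$ with $|t-\tau_1|\le L$, Taylor's theorem gives
\begin{equation*}
\gamma(t)-\gamma(\tau_1) = M^{\gamma,d}_{\tau_1}\, v(t) + \mathbf{e}(t),
\qquad v(t)=\Bigl(t-\tau_1,\,\tfrac{(t-\tau_1)^2}{2!},\,\ldots,\,\tfrac{(t-\tau_1)^d}{d!}\Bigr),
\end{equation*}
with $|\mathbf{e}(t)|\le C L^{d+1}$, where the bound on $\mathbf{e}$ uses only uniform bounds on the $(d+1)$-st derivative of $\gamma$, which are available since we have reduced to $\gamma\in\Gamma(d,\epsilon_0)$.

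Next, I would examine the size of each coordinate of $(M^{\gamma,d}_{\tau_1})^{-1}(\gamma(t)-\gamma(\tau_1))$. The main term $v(t)$ has first coordinate of size at most $L$, while the remaining coordinates $(t-\tau_1)^j/j!$, $j\ge 2$, are bounded by $L^j/j!\le L^2$ because $L\le 1$. For the remainder, I would invoke the uniform bound $\|(M^{\gamma,d}_{\tau_1})^{-1}\|\le B$, which is guaranteed by the discussion preceding Lemma \ref{lem:normalization} once $\epsilon_0$ is small; then $(M^{\gamma,d}_{\tau_1})^{-1}\mathbf{e}(t)$ has each coordinate of magnitude $\lesssim L^{d+1}\le L^2$, so it is absorbed into the $CL\times CL^2\times\cdots\times CL^2$ rectangle after enlarging $C$.

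Finally, to handle the $O(1)$-neighborhood, I would write an arbitrary point $\xi\in \lambda\gamma(\mathcal I)+O(1)$ as $\xi=\lambda\gamma(t)+w$ with $|w|\lesssim 1$, so that
\begin{equation*}
\xi-\lambda\gamma(\tau_1)=\lambda M^{\gamma,d}_{\tau_1}\Bigl(v(t)+\lambda^{-1}(M^{\gamma,d}_{\tau_1})^{-1}\mathbf e(t)+\lambda^{-1}(M^{\gamma,d}_{\tau_1})^{-1}w\Bigr).
\end{equation*}
The new correction $\lambda^{-1}(M^{\gamma,d}_{\tau_1})^{-1}w$ has every coordinate of size $\lesssim \lambda^{-1}$, and here is where the hypothesis $L\gtrsim \lambda^{-1/2}$ enters decisively: it yields $\lambda^{-1}\lesssim L^2$, so this term also fits into the rectangle $R_L$ (up to enlarging $C$). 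The only step that needs care is tracking that $L\le 1$ and $\lambda^{-1}\le L^2$ together force all the side lengths $L^j$, $j\ge 2$, together with the remainder contributions and the $O(1)$-fattening, to simultaneously lie in a rectangle with side lengths $CL$ in the first direction and $CL^2$ in the others; this is the only nontrivial bookkeeping, and it is straightforward.
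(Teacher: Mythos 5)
Your proof is correct and follows essentially the same route as the paper: Taylor expansion of $\gamma$ about $\tau_1$ to reduce to the normalized curve, coordinatewise bounds $|t-\tau_1|^j/j! \le L^2$ for $j\ge 2$ together with the remainder bound $L^{d+1}\le L^2$, and finally absorbing the $O(\lambda^{-1})$-fattening (after dividing by $\lambda$) via $\lambda^{-1}\lesssim L^2$ and the uniform bound on $(M^{\gamma,d}_{\tau_1})^{-1}$. The paper phrases the main term via $M^{\gamma,d}_{\tau_1}D_L^d\gamma_\circ^d(\tfrac{t-\tau_1}{L})$ and a rectangle of dimensions $L\times L^2\times\cdots\times L^d$, but this is just a repackaging of your coordinatewise estimate.
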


\begin{proof} To see this it is sufficient to show that  $ \gamma (\mathcal I) + O(\lambda^{-1})$ is contained in $M^{\gamma,d}_{\tau_1}{R_{L}}+\gamma(\tau_1)$.
For any $t \in [\tau_{1},\tau_{2}]$, by Taylor's expansion, we have
\begin{align*}
  \gamma(t) - \gamma(\tau_{1})
  = M^{\gamma,d}_{\tau_1}D_{L}^d{\gamma_{\circ}^{d}}(\frac{t-\tau_{1}}{L}) + \mathbf e(t,\tau_{1},L)
\end{align*}
where $\mathbf e(t,\tau_{1},L) \lesssim L^{d+1}$. So
$\gamma(\mathcal I)-\gamma(\tau_1)$ is contained in
$M^{\gamma,d}_{\tau_1} R$ where $R$ is a rectangle of dimension $\sim
L\times L^2\times \dots\times L^d$ which is centered at the origin.
Since $\lambda^{-1}\lesssim L^2$ it is clear that $ \gamma (\mathcal
I) + O(\lambda^{-1})$ contained in
$M^{\gamma,d}_{\tau_1}{R_{L}}+\gamma(\tau_1)$.
\end{proof}

The following concerns  the size of intersection of tubular neighborhoods of curves.
\begin{lem}\label{lem:geolem}
  Let ${\mathcal I}, {\mathcal J} \subset [0,1]$ be intervals satisfying $|{\mathcal I}|, |\mathcal J| \sim 2^{-n} $ and $ \dist({\mathcal I}, {\mathcal J}) \sim 2^{-n}$ with $ 2^{n} \leq \lambda^{\frac{1}{2}}$.
  Then, for $y \in \mathbb{R}^d$,
  \begin{align}\label{eqn:upboundinter}
    \big| (y + \lambda\gamma({\mathcal I}) + B(0,C)) \cap (\lambda\gamma({\mathcal J}) + B(0,C) )\big| \lesssim 2^n.
  \end{align}
 \end{lem}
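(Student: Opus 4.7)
The plan is a Fubini-type decomposition based on the first coordinate. By the reductions spelled out at the start of Section \ref{sec:pf bi}, we may assume $\|\gamma-\gamma_\circ^d\|_{C^N(I)}\le \epsilon_0$ for a small $\epsilon_0>0$; in particular $\gamma_1'(t)$ is bounded away from $0$, so the first coordinate map $t\mapsto\gamma_1(t)$ is invertible. Write $\mathcal I=[\tau_1,\tau_1+L]$, $\mathcal J=[\tau_2,\tau_2+L]$ with $L\sim 2^{-n}$ and $|\tau_2-\tau_1|\sim L$, and set $T_i=\lambda\gamma(\mathcal I_i)+B(0,C)$ where $\mathcal I_1=\mathcal I$, $\mathcal I_2=\mathcal J$. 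Parametrising $\lambda\gamma$ by its first coordinate $u=\lambda\gamma_1(t)$ and setting $\Psi_k(u):=\lambda\gamma_k(\gamma_1^{-1}(u/\lambda))$ for $k=2,\dots,d$, a direct calculation (which uses only that $\gamma_1'\approx 1$) gives
\[
T_i\subset\bigl\{x:x_1\in\lambda\gamma_1(\mathcal I_i)+O(1),\ |x_k-\Psi_k(x_1)|\le C',\ k=2,\dots,d\bigr\}.
\]

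Next I would integrate by Fubini in $x_1$. For each fixed $x_1$, the cross-section of $(y+T_1)\cap T_2$ in the remaining $d-1$ coordinates is the intersection of two boxes of side $O(1)$, hence has $(d-1)$-dimensional volume at most $O(1)$. The $x_1$-projection of $(y+T_1)\cap T_2$ is contained in $(\lambda\gamma_1(\mathcal I)+y_1+O(1))\cap(\lambda\gamma_1(\mathcal J)+O(1))$; since $|\gamma_1(\mathcal I)|,|\gamma_1(\mathcal J)|\sim L$ and $|\gamma_1(\tau_2)-\gamma_1(\tau_1)|\sim L$, non-emptiness forces $|y_1|\sim\lambda L$. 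Moreover, comparing the $k=2$ constraints from $y+T_1$ and from $T_2$ yields the compatibility inequality
\[
|\Psi_2(x_1)-y_2-\Psi_2(x_1-y_1)|\le 2C'.
\]

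The decisive point is that this inequality localises $x_1$ to an interval of length $\lesssim 2^n$. By the chain rule one computes
\[
\Psi_2''(u)=\frac{\gamma_2''(t)\,\gamma_1'(t)-\gamma_2'(t)\,\gamma_1''(t)}{\lambda\,(\gamma_1'(t))^3}\bigg|_{t=\gamma_1^{-1}(u/\lambda)}=\frac{1+O(\epsilon_0)}{\lambda},
\]
so the function $F(x_1):=\Psi_2(x_1)-\Psi_2(x_1-y_1)$ is monotone with $F'(x_1)=y_1\,\bar\Psi_2''\sim y_1/\lambda\sim L$. The compatibility therefore confines $x_1$ to an interval of length $\lesssim 1/L\sim 2^n$. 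Multiplying by the $O(1)$ cross-section bound gives $|(y+T_1)\cap T_2|\lesssim 2^n$, as required.

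The main technical obstacle I anticipate is controlling error terms when $\gamma$ is only close to, rather than equal to, $\gamma_\circ^d$. The scheme is robust to this because only the \emph{derivative} of $F$ (not its value) matters for the length estimate, and this derivative is stable under $C^N$-perturbation thanks to the chain-rule formula above; likewise the graph representation $T_i\subset\{|x_k-\Psi_k(x_1)|\le C'\}$ requires only that $\gamma_1'$ be bounded below, which follows from the closeness assumption.
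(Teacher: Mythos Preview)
Your argument is correct. Both your proof and the paper's exploit the same underlying fact---the transversality coming from the first two components of $\gamma$ (equivalently, the nonvanishing of $\gamma_1'\gamma_2''-\gamma_2'\gamma_1''$)---but the organisation differs. The paper rescales to $\gamma(\mathcal I)+O(\lambda^{-1})$, projects orthogonally onto the plane $V=\mathrm{span}\{\gamma'(0),\gamma''(0)\}$, and then invokes the standard fact that two $O(\lambda^{-1})$-thick arcs of a parabola-like curve meeting at angle $\sim 2^{-n}$ intersect in a set of area $\lesssim 2^n\lambda^{-2}$. You instead stay in $\mathbb R^d$, slice by Fubini in the $x_1$-direction, and make the angle argument explicit via the mean-value theorem applied to $\Psi_2(x_1)-\Psi_2(x_1-y_1)$, where your computation $\Psi_2''\sim\lambda^{-1}$ is exactly the curvature input behind the paper's ``angle $\sim 2^{-n}$'' claim. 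Your route is somewhat more self-contained (it does not defer to an unproved planar-tubes lemma) at the price of a little more bookkeeping; the paper's is shorter but relies on the reader supplying that lemma.
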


\begin{proof}  As before, by a change of variables it is sufficient to show that
\[\big| (y + \gamma({\mathcal I}) + B(0,C\lambda^{-1})) \cap (\gamma({\mathcal J}) + B(0,C\lambda^{-1}) )\big| \lesssim 2^n\lambda^{-d}.\]
Let $V$ be the subspace spanned by   $\gamma'(0)$ and $\gamma''(0)$, and $P_V$ be the projection to $V$.  Since both sets are contained
in $O(\lambda^{-1})$-neighborhood of arcs, it suffices to show that
\[ \big| P_V(y + \gamma({\mathcal I}) + B(0,C\lambda^{-1})) \cap P_V(\gamma({\mathcal J}) + B(0,C\lambda^{-1}) )\big| \lesssim 2^n\lambda^{-2}.\]
The sets $P_V(y + \gamma({\mathcal I}) + B(0,C\lambda^{-1})) $, $P_V(\gamma({\mathcal J}) + B(0,C\lambda^{-1}) )$ are contained in $P_V(y) + P_V\gamma({\mathcal I}) + O(\lambda^{-1})$, $ P_V\gamma({\mathcal J}) + O(\lambda^{-1})$. Since $\gamma$  is close to $\gamma_\circ^d$, $P_V\gamma$ is close to $(t,t^2/2)$.
So,  $P_V(y) + P_V\gamma({\mathcal I}) + O(\lambda^{-1})$ , $ P_V\gamma({\mathcal J}) + O(\lambda^{-1})$ are contained in neighborhoods of curves of the form $(t,t^2/2)+O(\lambda^{-1})$ for $t\in \mathcal I$, $t\in \mathcal J$ respectively,
and the angle between them is $\sim 2^{-n}$. Hence we get the desired bound.
\end{proof}

\begin{lem}\label{lem:lengthened-support}

Let $\lambda\gtrsim \delta^{-2} $.
    Let ${\mathcal I}$ be an interval contained in $[\tau-\delta, \tau+\delta]$  and $R^*$ be the rectangle
    $R^*=\{x:  |\langle x,\frac{\gamma'(\tau)}{|\gamma'(\tau)|} \rangle|\le 1/\delta,  |\langle v_2,x \rangle| \le 1, \dots, |\langle v_d,x \rangle| \le 1  \}$
    where $v_2, \dots, v_d$ are an orthonormal basis for the orthogonal complement of span $\{\gamma'(\tau)\}$.
Then, for a {sufficiently} large $C>0$,
    \begin{align*}
        \lambda\gamma(\mathcal I)+  R^*+O(1) \subset  \lambda\gamma(\mathcal I)+ O(C)
    \end{align*}
\end{lem}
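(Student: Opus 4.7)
The plan is to prove the lemma by first-order Taylor expansion of $\gamma$ at $\tau$, where the role of the hypothesis $\lambda\gtrsim\delta^{-2}$ is to force every quadratic error term down to $O(1)$.

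I will take an arbitrary element of the left-hand side, written as $q = \lambda\gamma(s) + p + w$ with $s\in\mathcal I$, $p = p_1 e_1 + p_\perp$ satisfying $|p_1|\le 1/\delta$ and $|p_\perp|\lesssim 1$ (where $e_1 := \gamma'(\tau)/|\gamma'(\tau)|$), and $|w|\le 1$, and then produce $t\in\mathcal I$ for which $|q - \lambda\gamma(t)|\le C$. The natural candidate will be
\[
t := s + \frac{p_1}{\lambda\,|\gamma'(\tau)|},
\]
chosen so that the leading tangential part of $\lambda(\gamma(t)-\gamma(s))$ cancels $p_1 e_1$. Under the normalization $\|\gamma-\gamma_\circ^d\|_{C^N(I)}\lesssim\epsilon_0$ adopted at the start of Section \ref{sec:pf bi}, one has $|\gamma'(\tau)|\sim 1$, $\|\gamma''\|_\infty\lesssim 1$, and $|\gamma'(s)-\gamma'(\tau)|\lesssim\delta$; in particular $|t-s|\le 1/(\lambda|\gamma'(\tau)|\delta)\lesssim\delta$ by $\lambda\gtrsim\delta^{-2}$.

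The main computation will be a second-order Taylor expansion of $\gamma$ at $s$:
\[
\lambda\bigl(\gamma(t)-\gamma(s)\bigr) = \lambda(t-s)\gamma'(s) + \tfrac12\lambda(t-s)^2\gamma''(\xi).
\]
Writing $\gamma'(s) = |\gamma'(\tau)|e_1 + (\gamma'(s)-\gamma'(\tau))$, the linear term equals $p_1 e_1 + O(|p_1|\delta) = p_1 e_1 + O(1)$, while the quadratic remainder is controlled by $O(\lambda|t-s|^2) = O(1/(\lambda\delta^2)) = O(1)$. Combining these two bounds yields $\lambda\gamma(t) = \lambda\gamma(s) + p_1 e_1 + O(1)$, so $q - \lambda\gamma(t) = p_\perp + w + O(1) = O(1)$, establishing the claim with $C$ an absolute constant.

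The hard part is verifying that $t$ actually lies in $\mathcal I$, rather than only in the enlarged interval $[\tau-2\delta,\tau+2\delta]$. Since $|t-s|\lesssim\delta$ matches the scale of $\mathcal I$, this is automatic when $s$ is in the interior portion of $\mathcal I$; near an endpoint the plan is to clip $t$ to that endpoint and absorb the resulting $O(1)$ tangential discrepancy into $C$, using that $\lambda\gamma(\mathcal I)$ has tangential extent $\gtrsim 1/\delta$ as a consequence of $\lambda\gtrsim\delta^{-2}$. This boundary bookkeeping is the only delicate step; the interior estimate reduces directly to the Taylor computation above.
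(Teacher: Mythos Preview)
Your interior argument is essentially the paper's: after rescaling by $\lambda^{-1}$, the paper writes a generic point of the left side as $\gamma(t)+s\gamma'(\tau)+O(\lambda^{-1})$ with $|s|\lesssim(\lambda\delta)^{-1}$ and computes
\[
\gamma(t)+s\gamma'(\tau)-\gamma(t+s)=\int_{t+s}^t\bigl(\gamma'(u)-\gamma'(\tau)\bigr)\,du=O(|s|\,\delta)=O(\lambda^{-1}).
\]
Your second-order Taylor expansion with $t=s+p_1/(\lambda|\gamma'(\tau)|)$ is an equivalent bookkeeping of the same estimate.

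You correctly flag the endpoint issue, which the paper's proof simply ignores. But your proposed clipping fix does not work as written. If $s$ is the right endpoint of $\mathcal I$ and $p_1=1/\delta$, clipping $t$ to that endpoint leaves a tangential residual of size $p_1=1/\delta$, not $O(1)$; for instance with $\gamma(t)=(t,t^2/2,0,\dots)$, $\tau=0$, $\lambda=\delta^{-2}$, $\mathcal I=[-\delta,\delta]$, the point $\lambda\gamma(\delta)+(1/\delta)e_1=(2/\delta,1/2,0,\dots)$ sits at distance $\sim 1/\delta$ from $\lambda\gamma(\mathcal I)$. Your claim that $\lambda\gamma(\mathcal I)$ has tangential extent $\gtrsim 1/\delta$ both presupposes $|\mathcal I|\gtrsim\delta$ (not stated in the lemma) and, even when true, cannot help once $s$ is already at the boundary and the required shift points outward.

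What the computation (yours or the paper's) actually establishes is the inclusion with $\mathcal I$ on the right replaced by its $O((\lambda\delta)^{-1})$-enlargement, since the new parameter may overshoot $\mathcal I$ by that amount. Because $(\lambda\delta)^{-1}\lesssim\delta$, this enlarged interval still has length $\sim 2^{-n}$ and separation $\sim 2^{-n}$ from the companion interval $\mathcal J$, so Lemma~\ref{lem:geolem} applies unchanged in the only place the result is used. That is the honest fix.
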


\begin{proof}
The above is equivalent to
    \begin{align*}
        \gamma(\mathcal I)+  \lambda^{-1}R^*+O(\lambda^{-1}) \subset  \gamma(\mathcal I)+ O(C\lambda^{-1}).
    \end{align*}
Any member in the left-hand side set can be written as $\gamma(t)+ s\gamma'(\tau)+
 O(\lambda^{-1}) $ for some $s$, $|s|\lesssim (\lambda\delta)^{-1}$.  Hence  we need only to show
\[ |\gamma(t)+ s\gamma'(\tau)-\gamma(t+s)|\le C\lambda^{-1}\]
    whenever $|s|\lesssim (\lambda\delta)^{-1}$. However this is clear because
    $\gamma(t)+ s\gamma'(\tau)-\gamma(t+s)=\int_{t+s}^t\gamma'(u)-\gamma'(\tau) du=O(\lambda^{-1})$.\end{proof}

Let $\varphi$ be a fixed Schwartz function which is equal to 1 in a unit cube $Q$ centered at the origin and vanishes outside $2Q$. Moreover $\widehat{\varphi}$ satisfies
\begin{align}\label{eqn:sch-decay}
  | \widehat{\varphi}(x) | \leq C_M \sum^{\infty}_{j=1} 2^{-Mj} \chi_{2^j Q} (x), \quad \textrm{for each} \,\, x \in \mathbb{R}^d,
  \, M \in \mathbb{Z}^+.
\end{align}
For a rectangle $R \subset \mathbb{R}^d$, we denote $\varphi_R$ as $\varphi\circ a_R^{-1}$, where $a_R$ is an affine mapping which takes $Q$ onto $R$.
The following lemma is a slight generalization of Lemma 3.1 in \cite{Er1}.

\begin{lem}\label{lem:uncertain} Let $\lambda_d \leq \cdots \leq \lambda_2 \leq \lambda_1 \lesssim \lambda$ and let $\mu$ be a positive Borel measure supported in $B(0,1)$ satisfying \emph{(\ref{eqn:alpha-diml measure})}.
  Let $R$ be a rectangle of dimensions $\lambda_1 \times \lambda_2 \times \cdots \times \lambda_d$,
  $R^*$ be the dual set of $R$ centered at the origin, and $A$ be a nonsingular matrix.
  Then \\
  \emph{(i)} $\| \mu \ast |\mathcal F(\varphi_R\circ A^{-1})| \|_{\infty} \lesssim \langle\mu\rangle_\alpha |\det A| \|A^{-t}\|^{\alpha} \lambda_{1}^{d-\alpha}$, \\
  \emph{(ii)} $\int_{K A^{-t} R^*} \mu \ast |\mathcal F(\varphi_R\circ A^{-1})| ( x + y ) dy \lesssim \langle\mu\rangle_\alpha K^{\alpha}
  \|A^{-t}\|^{\alpha} \lambda_{1}^{d-\alpha} \prod_{k=1}^{d}\lambda_k^{-1}$,
  for $K \gtrsim 1$ and $x \in \mathbb{R}^d$.
\end{lem}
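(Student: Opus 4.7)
The plan is to reduce both estimates to one geometric bound on the $\mu$-measure of dilates of the parallelotope $A^{-t}R^*$; then (i) follows by direct substitution and (ii) by Fubini combined with a scale decomposition.

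The first step is a pointwise envelope. The identity $\mathcal F(f\circ A^{-1})(\xi)=|\det A|\,\widehat{f}(A^t\xi)$ and the Schwartz decay (\ref{eqn:sch-decay}) applied to $\widehat{\varphi_R}$ yield
\[
|\mathcal F(\varphi_R\circ A^{-1})(x)|\lesssim_M |\det A|\prod_{k=1}^d \lambda_k \sum_{j\ge 0}2^{-Mj}\chi_{2^j A^{-t}R^*}(x).
\]
The key geometric claim is that for $t\ge 1$ and any $x\in\mathbb R^d$,
\[
\mu(x+tA^{-t}R^*)\lesssim \langle\mu\rangle_\alpha(t\|A^{-t}\|)^\alpha\,\frac{\lambda_1^{d-1-\alpha}}{\lambda_2\cdots\lambda_d}.
\]
The edges of the parallelotope $A^{-t}R^*$ have lengths $\le\|A^{-t}\|/\lambda_i$, so its shortest edge is $\sim\|A^{-t}\|/\lambda_1$. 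I would cover $A^{-t}R^*$ by $\sim\lambda_1^{d-1}/(\lambda_2\cdots\lambda_d)$ balls of radius $\|A^{-t}\|/\lambda_1$, apply the Frostman hypothesis to each, and rescale by $t$. Part (i) then follows by inserting this geometric bound with $t=2^j$ into the envelope and summing in $j$ for $M>\alpha$; the factor $(\prod_k\lambda_k)\cdot\lambda_1^{d-1-\alpha}/(\lambda_2\cdots\lambda_d)=\lambda_1^{d-\alpha}$ produces the stated constant.

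For (ii), Fubini and symmetry of $KA^{-t}R^*$ rewrite the left-hand side as $\int(|F|\ast\chi_{KA^{-t}R^*})(x-z)\,d\mu(z)$, where $F=\mathcal F(\varphi_R\circ A^{-1})$. Combining the envelope with the elementary estimate $\chi_E\ast\chi_F\le \min(|E|,|F|)\chi_{E+F}$ for the symmetric parallelotopes reduces matters to a sum in $j$ which I would split at $2^j=K$. In the regime $2^j\le K$ the volume minimum is $\sim 2^{jd}|A^{-t}R^*|$ and $(2^j+K)^\alpha\sim K^\alpha$, so the sum converges for $M>d$ to a multiple of $K^\alpha$; in the regime $2^j>K$ the minimum is $K^d|A^{-t}R^*|$ and the growth $(2^j)^\alpha$ is controlled by $2^{-Mj}$, again yielding at most $K^\alpha$ for $M\ge d$. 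The volume factors collapse exactly as in (i) to $\lambda_1^{d-\alpha}\prod_k\lambda_k^{-1}$, completing the proof.

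The main obstacle is the geometric covering step. One must cover by balls of the \emph{shortest} edge scale $\|A^{-t}\|/\lambda_1$; the naive covering by a single ball at the diameter scale would give $\langle\mu\rangle_\alpha(\|A^{-t}\|/\lambda_d)^\alpha$ and miss the sharp exponent $\lambda_1^{d-\alpha}$ by a factor of $(\lambda_1/\lambda_d)^\alpha$. Once the correct covering is identified, the two estimates reduce to routine Schwartz-tail summations that are controlled by choosing $M$ sufficiently large in (\ref{eqn:sch-decay}).
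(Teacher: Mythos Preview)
Your proposal is correct and follows essentially the same route as the paper. Both arguments use the Schwartz envelope \eqref{eqn:sch-decay} to reduce to the $\mu$-measure of dilates of $A^{-t}R^*$, and both control this measure by covering $R^*$ with $\sim \lambda_1^{d-1}/(\lambda_2\cdots\lambda_d)$ cubes of side $\lambda_1^{-1}$ and then applying $A^{-t}$ (this is the cleanest way to justify your ball count for the parallelotope); for (ii) the paper uses the same Fubini/containment step, though it keeps only the cruder volume bound $2^{jd}|A^{-t}R^*|$ rather than your $\min(2^{jd},K^d)$ and absorbs the resulting loss by taking $M$ large, so no splitting at $2^j=K$ is needed.
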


\begin{proof}
Fixing a large enough $M$, by  \eqref{eqn:sch-decay} and  change of variables we have
    \begin{align*}
    \begin{aligned}
    | \widehat{\varphi_R}(x) |
    & \leq C_M \prod_{k=1}^d \lambda_k \sum^{\infty}_{j=1} 2^{-Mj} \chi_{2^j R^*} (x),
    \end{aligned}
    \end{align*}
    which gives
    \begin{align}\label{eqn:schdecay}
    \int |\widehat{\varphi_{R}}(A^{t}(x-y))| d\mu(y)
    & \lesssim \prod_{k=1}^d \lambda_k \sum^{\infty}_{j=1} 2^{-Mj} \int \chi_{2^j R^*} (A^{t}(x-y)) d\mu(y).
    \end{align}
Since $2^{j}R^*$ is covered by as many as $\sim\lambda_{1}^{d}/\prod_{k=1}^d \lambda_k$ cubes with side-length $2^{j}\lambda^{-1}_{1}$, 
applying  (\ref{eqn:alpha-diml measure}) and \eqref{munorm} to each of the cubes gives
   $
    \int \chi_{2^j R^*} (A^{t}(x-y)) d\mu(y) \lesssim \langle\mu\rangle_\alpha (\|A^{-t}\| 2^{j} \lambda_{1}^{-1})^{\alpha} \frac{\lambda_{1}^{d}}{\prod_{k=1}^d \lambda_k}.
    $  
  Since $\mu \ast |\mathcal F(\varphi_{R}\circ A^{-1})|(x) = |\det A| \int |\widehat{\varphi_{R}}(A^{t}(x-y))| d\mu(y)$,
  by combining the inequalities we get
    \begin{align*}
    \mu \ast |\mathcal F(\varphi_{R}\circ A^{-1})|(x)
    \lesssim |\det A|  \sum^{\infty}_{j=1} 2^{-Mj} \langle\mu\rangle_\alpha (\|A^{-t}\|2^{j}\lambda_{1}^{-1})^{\alpha} {\lambda_{1}^{d}}
    \lesssim \langle\mu\rangle_\alpha |\det A| \|A^{-t}\|^{\alpha} \lambda^{d-\alpha}_{1}.
    \end{align*}
 This proves (i).

We now turn to (ii). Without loss of generality we may assume $ x = 0 $.  By \eqref{eqn:schdecay},
    \begin{align}\label{eqn:uc31}
   \begin{aligned}
   & \int_{K A^{-t} R^*} \mu \ast |\mathcal F(\varphi_{R}\circ A^{-1})|(y) dy
    \\ \lesssim |\det A| &\prod^d_{k=1}\lambda_{k} \sum^{\infty}_{j=1} 2^{-Mj} \iint \chi_{K A^{-t} R^*}(y) \chi_{2^{j}A^{-t}R^*}(y-u) d\mu(u) dy. 
    \end{aligned}
    \end{align}
Since $u-y \in 2^{j} A^{-t} R^*$ in the last integrand,
    $
     \chi_{K A^{-t} R^*}(y)
    \lesssim  \chi_{(K+2^{j}) A^{-t} R^*}(u)$.  So we have
    \begin{align*}
     \iint \chi_{K A^{-t} R^*}(y) &\chi_{2^{j}A^{-t}R^*}(y-u) d\mu(u)dy
     \le  |\det A^{-t}|\frac{2^{jd}}{\prod^d_{k=1}\lambda_{k}}
    \int \chi_{(K+2^{j}) A^{-t} R^*}(u)  d\mu(u)  \\
    &\lesssim  |\det A^{-t}|\frac{2^{jd}}{\prod^d_{k=1}\lambda_{k}} \langle\mu\rangle_\alpha (\|A^{-t}\|(K+2^{j})\lambda_{1}^{-1})^{\alpha} \frac{\lambda_{1}^{d}}{\prod^d_{k=1}\lambda_{k}}.
    \end{align*}
  For the last inequality   we cover  $(K+2^{j})R^*$  with $O(\frac{\lambda_{1}^{d}}{\prod^d_{k=1}\lambda_{k}})$ cubes of side length $(K+2^{j})\lambda_{1}^{-1}$ and use \eqref{eqn:alpha-diml measure} and \eqref{munorm}.
By combining this and \eqref{eqn:uc31}, we  get (ii).
\end{proof}

\subsection{\it Whitney type decomposition} By a Whitney type decomposition  we may write
\begin{align}\label{eqn:whit-set}
    [0,1] \times [0,1] = \bigg[ \bigcup_{4 \leq 2^n \leq \lambda^{\frac{1}{2}}} \Big[ \bigcup_{\substack{|\mathcal I|=|\mathcal J|=2^{-n} \\ \mathcal I \sim \mathcal J}} (\mathcal I \times \mathcal J) \Big] \bigg] \bigcup D
\end{align}
where $\mathcal I$, $\mathcal J$ are dyadic intervals, $D$ is  a union of finitely overlapping boxes of side length $\approx \lambda^{-\frac{1}{2}}$, and $D$ is contained in the $C\lambda^{-\frac{1}{2}}$-neighborhood of the diagonal $\{ (x,x): x \in [0,1]\}$.
Here, we say $\mathcal I \sim \mathcal J$ to mean that $\mathcal I$, $\mathcal J$ are not adjacent but have adjacent parent intervals.

For  $\mathcal I=[a, b]$ we set
\begin{align*} \Omega_{ \lambda, \mathcal I } =
\begin{cases}
\,\, \{ x \in \lambda \gamma(I) + O(1) :
\gamma'(b)\cdot {(x-\gamma(b))}\le  0\le \gamma'(a) \cdot {(x-\gamma(a))} \}  
&\text{ if  $a\neq 0$, $b\neq 1$},  \\
\,\, \{ x \in \lambda \gamma(I) + O(1) :
\gamma'(b)\cdot {(x-\gamma(b))}\le  0\}  &\text{ if  $a=0$},\\
\,\,  \{ x \in \lambda \gamma(I) + O(1) :      0\le \gamma'(a) \cdot {(x-\gamma(a))} \}  &\text{ if  $b= 1$},
  \end{cases}
  \end{align*}
and set
\begin{align}\label{def:omega}
    g_{\mathcal I} = g \cdot \chi_{\Omega_{\lambda,\mathcal I}}.
\end{align}
For distinct dyadic intervals $\mathcal I, \mathcal J \subset [0,1]$, the intersection of $\Omega_{\lambda, \mathcal I}$ and $\Omega_{\lambda, \mathcal J}$ has Lebesgue measure zero in $\mathbb{R}^{d}$ because $2^{-n}\ge \lambda^{-1/2}$.
This leads to
\begin{align*}
    | \widehat{g}(x) |^2 \leq \sum_{n \geq 2}^{\log\lambda^{\frac{1}{2}}} \sum_{\substack{|\mathcal I|=|\mathcal J|=2^{-n} \\ \mathcal I \sim \mathcal J}} | \widehat{g_{\mathcal I}}(x) \widehat{g_{\mathcal J}}(x) | + 2\sum_{\mathcal I \in \mathfrak I_{E}} | \widehat{g_{\mathcal I}}(x) |^{2}
\end{align*}
where $\mathfrak I_{E}$ is a finitely overlapping set of dyadic intervals $\mathcal I$ with $|\mathcal I| \approx \lambda^{-\frac{1}{2}}$. Using above inequality, we have for any $q \geq 2$,
\begin{align}\label{eqn:whit ineqn}
    \| \widehat{g} \|^2_{L^q(d\mu)} \leq \sum_{n \geq 2}^{\log\lambda^{\frac{1}{2}}} \sum_{\substack{|\mathcal I|=|\mathcal J|=2^{-n} \\ \mathcal I \sim \mathcal J}} \| \widehat{g_{\mathcal I}} \widehat{g_{\mathcal J}} \|_{L^{\frac{q}{2}}(d\mu)} + \sum_{\mathcal I \in \mathfrak I_{E}} \| \widehat{g_{\mathcal I}} \|^2_{L^q(d\mu)}.
\end{align}

\subsubsection{Estimate for $g_{\mathcal I}$, $\mathcal I \in \mathfrak I_E$} 
For $\mathcal I = [\tau_1,\tau_2] \in \mathfrak I_E$, we have $2^{-n} \approx \lambda^{-1/2}$. By Lemma \ref{lem:geo_rectangle}  the support of $g_{\mathcal I}$, i.e.
$\Omega_{\lambda,\mathcal I}$ is contained in a parallelotope $M^{\gamma,d}_{\tau_{1}}R$ where $R$ is a rectangle of dimensions $C\lambda^{\frac{1}{2}} \times C \times \cdots \times C$ .
Hence $\widehat{g_{\mathcal I}} = \widehat{g_{\mathcal I}} \ast \mathcal F(\varphi_R \circ (M^{\gamma,d}_{\tau_{1}})^{-1})$. 
Since
$\| \mathcal F(\varphi_R \circ (M^{\gamma,d}_{\tau_{1}})^{-1}) \|_{1}\lesssim C$,   by H\"older's inequality we get
\begin{align*}
  | \widehat{g_{\mathcal I}} |
  \lesssim ( | \widehat{g_{\mathcal I}} |^q \ast | \mathcal F(\varphi_R \circ (M^{\gamma,d}_{\tau_{1}})^{-1}) | )^{\frac{1}{q}}.
\end{align*}
So, we have
\begin{align}\label{eqn:linear_decay_est}
  \begin{aligned}
    \| \widehat{g_{\mathcal I}} \|^{q}_{L^{q}(d\mu)}
    \lesssim \int (|\widehat{g_{\mathcal I}}|^{q} \ast | \mathcal F(\varphi_R \circ (M^{\gamma,d}_{\tau_{1}})^{-1}) |)(x) d\mu(x)
    \lesssim \langle\mu\rangle_\alpha \lambda^{\frac{d-\alpha}{2}} \| \widehat{g_{\mathcal I}} \|^{q}_{q}.
  \end{aligned}
\end{align}
The last inequality follows from (i) in Lemma \ref{lem:uncertain} and the fact that $R$ has dimensions $C\lambda^{\frac{1}{2}} \times C \times \cdots \times C$. Since $q \geq 2$,  by Hausdorff-Young inequality and H\"older's inequality, we have
\begin{align*}
 \|\widehat {g_{\mathcal I}}\|_{q} \le \|g_{\mathcal I}\|_{q'}
  \leq  \|g_{\mathcal I}\|_{2}
 |\Omega_{\lambda, \mathcal I}|^{(\frac12 -\frac1q)}
  \lesssim \| g_{\mathcal I} \|_{2} (\lambda^{\frac{1}{2}})^{\frac{1}{2}-\frac{1}{q}}.
\end{align*}
Thus, combining this with \eqref{eqn:linear_decay_est},
\begin{align}\label{IE}
  \sum_{\mathcal I \in \mathfrak I_E} \| \widehat{g_{\mathcal I}} \|^2_{L^q(d\mu)}
\lesssim \langle\mu\rangle_\alpha^{\,\,\frac2q}\lambda^{\frac{d-\alpha}{q}} \lambda^{\frac{1}{2}-\frac{1}{q}} \sum_{\mathcal I \in \mathfrak I_E} \|g_{\mathcal I}\|^2_2
  \lesssim \langle\mu\rangle_\alpha^{\,\,\frac2q}\lambda^{\frac{1}{2} + \frac{(d-1)-\alpha}{q}} \|g\|^2_2.
\end{align}

\subsubsection{Bilinear term estimate}
Firstly, we assume $q=2$. Fix $n$ with $4 \leq 2^n \leq \lambda^{1/2}$ and a pair $\mathcal I = [\tau_{1},\tau_{2}], \mathcal J = [\tau_{3},\tau_{4}]$ of dyadic intervals with $|\mathcal I| = |\mathcal J| = 2^{-n}$ and $\mathcal I \sim \mathcal J$. 
Since $\mathcal I \sim \mathcal J$, the support of $ g_{\mathcal I} \ast g_{\mathcal J} $ is contained in a parallelotope $M_{\tau_{1}}^{\gamma,d}R$ where $R$ is a rectangle with dimensions $ 2C\lambda 2^{-n} \times 2C\lambda 2^{-2n} \times \cdots \times 2C\lambda 2^{-2n} $.
Using $ g_{\mathcal I} \ast g_{\mathcal J} = ( g_{\mathcal I} \ast g_{\mathcal J} ) (\varphi_{R}\circ (M_{\tau_{1}}^{\gamma,d})^{-1}) $, we obtain
\begin{align}\label{ap1}
  \int | \widehat{g_{\mathcal I}}(x) \widehat{g_{\mathcal J}}(x) | d\mu(x)
  \lesssim \int | \widehat{g_{\mathcal I}}(x) \widehat{g_{\mathcal J}}(x) | ( \mu \ast | \mathcal F( \varphi_R \circ (M^{\gamma,d}_{\tau_{1}})^{-1} ) | )(x) dx.
\end{align}

Consider a tiling of $\mathbb{R}^d$ with rectangles $T$ of dimensions $C2^{-n} \times C \times \cdots \times C$. 
Note that each $T$ is contained in a rectangle $x_{T} + C\lambda 2^{-2n}R^*$ for some $x_{T} \in \mathbb{R}^d$.
Also let $\phi$ be a fixed non-negative Schwartz function satisfying $\phi > 1/2$ on $Q$, $ \supp \widehat{\phi} \subseteq Q$ and the inequality of (\ref{eqn:sch-decay}).
Using the properties of $\phi$, we obtain
$  1 \lesssim \sum_T \phi^3_T \lesssim \sum_T \phi^2_T \lesssim 1$,
where $\phi_T := \phi \circ a^{-1}_T$.

Set $\widehat{g_{\mathcal I, T}} :=  \widehat{g_{\mathcal I}} \cdot(\phi_{T}\circ(M^{\gamma,d}_{\tau_{1}})^{t}).$   
By $1 \lesssim \sum_T \phi^3_T$ and  Cauchy-Schwarz inequality, we get
\begin{equation}\label{ap2}
\begin{aligned}
  & \int | \widehat{g_{\mathcal I}}(x) \widehat{g_{\mathcal J}}(x) | (\mu \ast | \mathcal F( \varphi_R \circ (M^{\gamma,d}_{\tau_{1}})^{-1}) |)(x) dx  \\
  & \lesssim \sum_{T} \int | \widehat{g_{\mathcal I, T}}(x) \widehat{g_{\mathcal J, T}}(x) | (\mu \ast | \mathcal F( \varphi_R \circ (M^{\gamma,d}_{\tau_{1}})^{-1}) |)(x) (\phi_{T}\circ(M^{\gamma,d}_{\tau_{1}})^{t}) (x) dx  \\
  & \lesssim \sum_{T} \|  \widehat{g_{\mathcal I, T}} \widehat{g_{\mathcal J, T}} \|_2 \,\|\mu \ast | \mathcal F( \varphi_R \circ (M^{\gamma,d}_{\tau_{1}})^{-1}) | (\phi_{T}\circ(M^{\gamma,d}_{\tau_{1}})^{t}) \|_2.
\end{aligned}
\end{equation}

By a standard argument
\begin{align*}
  \begin{aligned}
    &\int | \widehat{g_{\mathcal I, T}}(x) \widehat{g_{\mathcal J, T}}(x)|^2 dx = \int | \widetilde{g_{\mathcal I, T}} \ast \overline{g_{\mathcal J, T}}(y) |^2 dy \\
    &\hspace {15 mm} \leq \sup_{y} | (y + \supp(g_{\mathcal I, T})) \cap \supp(g_{\mathcal J, T}) | \| g_{\mathcal I, T} \|^2_2 \| g_{\mathcal J, T} \|^2_2.
  \end{aligned}
\end{align*}

By Lemma \ref{lem:lengthened-support}, $y + \supp(g_{\mathcal I, T})$, $\supp(g_{\mathcal J, T})$ are contained
in $y + \lambda\gamma({\mathcal I}) + B(0,C)$, $\lambda\gamma({\mathcal J}) + B(0,C)$, respectively.
Thus, Lemma \ref{lem:geolem} implies $ \sup_{y} | (y + \supp(g_{\mathcal I, T})) \cap \supp(g_{\mathcal J, T}) | \lesssim 2^n$.
So, we get
\begin{align}\label{eqn:par-cs-young ineq}
  \int | \widehat{g_{\mathcal I, T}}(x) \widehat{g_{\mathcal J, T}}(x)|^2 dx \lesssim 2^n \| g_{\mathcal I, T} \|^2_2 \| g_{\mathcal J, T} \|^2_2.
\end{align}

Now we show
\begin{equation}
\label{ap3} \|\mu \ast | \mathcal F( \varphi_R \circ
(M^{\gamma,d}_{\tau_{1}})^{-1}) |
(\phi_{T}\circ(M^{\gamma,d}_{\tau_{1}})^{t}) \|_2^2
  \lesssim \langle\mu\rangle_\alpha^{\,\,2} \lambda^{d-\alpha} 2^{-n}.
\end{equation}
First we note that by (i) in Lemma \ref{lem:uncertain},
\begin{align}\label{eqn:fractal-decay}
  \| \mu \ast | \mathcal F( \varphi_R \circ (M^{\gamma,d}_{\tau_{1}})^{-1}) | \|_{\infty} \lesssim \langle\mu\rangle_\alpha \lambda^{ d - \alpha } (2^{-n})^{ d - \alpha }.
\end{align}
Using  
\eqref{eqn:sch-decay} for $\phi_T$ and (ii) in Lemma \ref{lem:uncertain} with recalling that $T$ is contained in $x_T + C \lambda 2^{-2n} R^*$ for some $x_T \in \mathbb{R}^d$, we have
\begin{align*}
  & \int (\mu \ast | \mathcal F( \varphi_R \circ (M^{\gamma,d}_{\tau_{1}})^{-1}) |)(x) (\phi_{T}\circ(M^{\gamma,d}_{\tau_{1}})^{t}) (x) dx  \\
&  \lesssim  \sum^{\infty}_{j=1} 2^{-Mj} \int_{2^{j}\lambda 2^{-2n} (M^{\gamma,d}_{\tau_{1}})^{-t} R^*} \mu \ast | \mathcal F( \varphi_R \circ (M^{\gamma,d}_{\tau_{1}})^{-1}) |(x - 2^j (M^{\gamma,d}_{\tau_{1}})^{-t}x_{T}) dx \\
&  \lesssim \langle\mu\rangle_\alpha (2^{n})^{d-1-\alpha}.
\end{align*}
Since $\phi_T (x) \lesssim 1$, by combining this and \eqref{eqn:fractal-decay}, we get \eqref{ap3}.

By the inequalities  \eqref{ap1}, \eqref{ap2}, \eqref{eqn:par-cs-young ineq}, and \eqref{ap3} and using the fact that $\sum_{T} \phi^{2}_{T} \lesssim 1$
\begin{align}\label{ap4}
  \| \widehat{g_{\mathcal I}} \widehat{g_{\mathcal J}} \|_{L^1(d\mu)} \lesssim \langle\mu\rangle_\alpha \lambda^{\frac{d-\alpha}{2}} \sum_T \| g_{\mathcal I, T} \|_2 \| g_{\mathcal J, T} \|_2
  \lesssim \langle\mu\rangle_\alpha \lambda^{\frac{d-\alpha}{2}} \| g_{\mathcal I} \|_2 \| g_{\mathcal J} \|_2.
\end{align}
For the last inequality, we used the Cauchy-Schwarz inequality and  Plancherel's theorem.

By \eqref{ap4}, we have
\begin{align*}
  \sum_{ 4 \leq 2^n \leq \lambda^{1/2} } \sum_{ \substack{|\mathcal I| = |\mathcal J| = 2^{-n} \\ \mathcal I \sim \mathcal J }} \| \widehat{g_{\mathcal I}} \widehat{g_{\mathcal J}} \|_{L^1(d\mu)}
   \lesssim \langle\mu\rangle_\alpha \lambda^{\frac{d-\alpha}{2}} \sum_{ 4 \leq 2^n \leq \lambda^{1/2} } \sum_{|\mathcal I|=2^{-n}} \sum_{\mathcal I \sim \mathcal J} \| g_{\mathcal I} \|_2 \| g_{\mathcal J} \|_2  \\
    \lesssim \langle\mu\rangle_\alpha \lambda^{\frac{d-\alpha}{2}} \sum_{ 4 \leq 2^n \leq \lambda^{1/2} } \Big( \sum_{|\mathcal I|=2^{-n}} \|g_{\mathcal I}\|^2_2 \Big)^{\frac{1}{2}} \Big( \sum_{|\mathcal I|=2^{-n}} \|g_{\mathcal I}\|^2_2 \Big)^{\frac{1}{2}}
  \lesssim \langle\mu\rangle_\alpha \lambda^{\frac{d-\alpha}{2} }\log \lambda  \| g \|_2^2.
\end{align*}
For the second inequality we use the fact that  there are finitely many intervals $\mathcal J$ related to $\mathcal I$ for each dyadic interval $\mathcal I$.  Thus we get the required bound in the case $q=2$.

Now we assume $q \geq 4$. Let $\mathcal I$, $\mathcal J$ with $\mathcal I\sim \mathcal J$,  and $R$ be defined as before. Using $ g_{\mathcal I} \ast g_{\mathcal J} = ( g_{\mathcal I} \ast g_{\mathcal J} ) (\varphi_{R}\circ (M_{\tau_{1}}^{\gamma,d})^{-1}) $,  H\"older's inequality, and (\ref{eqn:fractal-decay}), we have
\begin{align*}
  \|\widehat{g_{\mathcal I}}\widehat{g_{\mathcal J}}\|^{\frac{q}{2}}_{L^{\frac{q}{2}}(d\mu)}
  & \lesssim \langle\mu\rangle_\alpha \lambda^{ d - \alpha } (2^{-n})^{ d - \alpha } \|\widehat{g_{\mathcal I}}\widehat{g_{\mathcal J}}\|^{\frac{q}{2}-2}_{\infty} \int |\widehat{g_{\mathcal I}}(x)\widehat{g_{\mathcal J}}(x)|^2 dx.
\end{align*}
Repeating the argument for  (\ref{eqn:par-cs-young ineq}) and using Lemma \ref{lem:geolem}, we have
$
\int |\widehat{g_{\mathcal I}}(x)\widehat{g_{\mathcal J}}(x)|^2 dx
\lesssim 2^n \|g_{\mathcal I}\|^2_2 \|g_{\mathcal J}\|^2_2.
$
Also,   by Young's inequality and Cauchy-Schwarz inequality,
$\|\widehat{g_{\mathcal I}} \widehat{g_{\mathcal J}}\|_{\infty}
\lesssim \lambda 2^{-n} \|g_{\mathcal I}\|_2 \|g_{\mathcal J}\|_2$.
Hence, we get
 \begin{align*}
\|\widehat{g_{\mathcal I}}\widehat{g_{\mathcal J}}\|^{\frac{q}{2}}_{L^{\frac{q}{2}}(d\mu)}
\lesssim \langle\mu\rangle_\alpha \lambda^{d-\alpha+\frac{q}{2}-2} (2^{n})^{-d+\alpha+3-\frac{q}{2}}\|g_{\mathcal I}\|^{\frac{q}{2}}_2 \|g_{\mathcal J}\|^{\frac{q}{2}}_2.
\end{align*}
Here, 
if $-d + \alpha + 3 - \frac{q}{2} \geq 0 $, since $2^n \leq \lambda^{\frac{1}{2}}$, then $\lambda^{d-\alpha+\frac{q}{2}-2} (2^{n})^{-d+\alpha+3-\frac{q}{2}} \leq \lambda^{\frac{q}{4}+\frac{(d-1)-\alpha}{2}}$. Otherwise, $\lambda^{d-\alpha+\frac{q}{2}-2} (2^{n})^{-d+\alpha+3-\frac{q}{2}} < \lambda^{d-\alpha+\frac{q}{2}-2}$. Hence
\begin{align*}
\|\widehat{g_{\mathcal I}}\widehat{g_{\mathcal J}}\|_{L^{\frac{q}{2}}(d\mu)}
\lesssim \langle\mu\rangle_\alpha^{\,\,\frac2q} \lambda^{\max (\frac{1}{2}+\frac{(d-1)-\alpha}{q}, 1-\frac{2\alpha}{q}+\frac{2(d-2)}{q})} \|g_{\mathcal I}\|_2 \|g_{\mathcal J}\|_2.
\end{align*}
Thus by the same argument as before, we sum along $n$, $\mathcal I$, $\mathcal J$ to get
\begin{align*}
\sum_{ 4 \leq 2^n \leq \lambda^{1/2} } \sum_{ \substack{|\mathcal I| = |\mathcal J| = 2^{-n} \\ \mathcal I \sim \mathcal J }} \|\widehat{g_{\mathcal I}}\widehat{g_{\mathcal J}}\|_{L^{\frac{q}{2}}(d\mu)}
\lesssim \langle\mu\rangle_\alpha^{\,\,\frac2q} \lambda^{\max (\frac{1}{2}+\frac{(d-1)-\alpha}{q}, 1-\frac{2\alpha}{q}+\frac{2(d-2)}{q}) + \epsilon} \| g \|_2^2.
\end{align*}
Since the intermediate cases follow by interpolation, this completes the proof.
\qed



\begin{thebibliography}{999999}

\bibitem{BL} Bak, J.-G., Lee, S.:
Estimates for an oscillatory integral operator related to
restriction to space curves. Proc. Amer. Math. Soc.
\textbf{132} (2004), 1393--1401.

\bibitem{BOS2} Bak, J.-G., Oberlin, D., Seeger, A.: Restriction of Fourier transforms to curves, II: Some classes with
vanishing torsion. J. Austr. Math. Soc. \textbf{85}
(2008), 1--28.

\bibitem{BOS1} Bak, J.-G., Oberlin, D., Seeger, A.: Restriction of Fourier transforms to curves and related oscillatory
integrals. Amer. J. Math. \textbf{131} (2009), 277--311.

\bibitem{BOS3} Bak, J.-G., Oberlin, D., Seeger, A.: Restriction of Fourier transforms to curves: An endpoint estimate
with affine arclength measure. J. Reine Angew. Math. \textbf{682} (2013), 167--206.


\bibitem{Bo} Bourgain, J.: Hausdorff dimension and distance sets. 
Israel J. Math. \textbf{87} (1994).

\bibitem{BoGu} Bourgain, J., Guth, L.: Bounds on oscillatory integral operators based on multilinear estimates. Geom. Funct. Anal. \textbf{21} (2011), no. 6, 1239--1295.

\bibitem{DeMu} Dendrinos, S., M\"uller, D.: Uniform estimates for the local restriction of the Fourier transform to curves. Trans. Amer. Math. Soc. \textbf{365} (2013), 3477--3492.

\bibitem{DeWr} Dendrinos, S., Wright, J.: Fourier restriction to polynomial curves I: a geometric inequality. Amer. J. Math. \textbf{132} (2010), no. 4, 1031--1076.

\bibitem{Dr} Drury, S. W.: Restrictions of Fourier transforms to curves. Ann. Inst. Fourier. \textbf{35} (1985), no. 1, 117--123.

\bibitem{DrMa} Drury, S. W., Marshall, B. P.: Fourier restriction theorems for degenerate curves. Math. Proc. Cambridge Philos. Soc. \textbf{101} (1987), no. 3, 541--553.

\bibitem{DrMa1} Drury, S. W., Marshall, B. P.: Fourier restriction theorems for curves with affine and Euclidean arclengths, Math. Proc. Cambridge Philos. Soc. \textbf{97} (1985), no. 1, 111--125.

\bibitem{Er} Erdo\u{g}an, M. B.: A note on the Fourier transform of fractal measures. Math. Res. Lett. \textbf{11} (2004), no. 2-3, 299--313.

\bibitem{Er1} Erdo\u{g}an, M. B.: A bilinear Fourier extension theorem and applications to the distance set problem. Int. Math. Res. Not. \textbf{23} (2005), 1411--1425.

\bibitem{Er2} Erdo\u{g}an, M. B.: On Falconer's distance set conjecture, Rev. Mat. Iberoam. \textbf{22} (2006), no. 2, 649--662.

\bibitem{ErOb} Erdo\u{g}an, M. B., Oberlin, D.: Restricting Fourier transforms of measures to curves in $\mathbb{R}^{2}$. Canad. Math. Bull. \textbf{56} (2013), no. 2, 326--336.

\bibitem{HaLe} Ham, S., Lee, S.: Restriction estimates for space curves with respect to general measures. Adv. Math. \textbf{254} (2014), 251--279.

\bibitem{Mab} Mattila, P.: Geometry of sets and measures in Euclidean spaces: Fractals and rectifiability. Cambridge Studies in Advanced Mathematics. vol. 44. Cambridge University Press, Cambridge. (1995).

\bibitem{Ma} Mattila, P.: Spherical averages of Fourier transforms of measures with finite energy; dimension of intersections and distance sets. Mathematika. \textbf{34} (1987), no. 2, 207--228.

\bibitem{Sj} Sj\"{o}lin, P.: Estimates of averages of Fourier transforms of measures with finite energy. Ann. Acad. Sci. Fenn. Math. \textbf{22} (1997), no. 1, 227--236.


\bibitem{Sto} Stovall, B.: Uniform estimates for Fourier restriction to polynomial curves in $\mathbb{R}^{d}$. 
 Amer. J. Math. \textbf{138} (2016), no. 2, 449--471.


\bibitem{Wo1} Wolff, T.: Decay of circular means of Fourier transforms of measures. Internat. Math. Res. Notices. (1999), no. 10, 547--567.

\bibitem{Wo2} Wolff, T.: Lectures on harmonic analysis. Univ. Lecture Ser. vol. 29. Amer. Math. Soc., Providence. (2003).

\end{thebibliography}
\end{document}